\newcommand{\HF}{{HF}}
\newcommand{\FH}{{FH}}
\newcommand{\AF}{\mathit{AF}}
\newcommand{\doot}{.}
\newcommand{\bbar}{\overline}
\newcommand{\cddots}{\cdots}
\newcommand{\rrvert}{\vert}
\newcommand{\llvert}{\vert}
\newcommand{\xrightarrow}[1]{\stackrel{#1}{\longrightarrow}}
\renewcommand{\a}{\alpha}
\renewcommand{\b}{\beta}
\newcommand{\G}{\Gamma}
\newcommand{\D}{\Delta}
\renewcommand{\l}{\lambda}
\newcommand{\m}{\mu}
\renewcommand{\o}{\omega}
\newcommand{\s}{\sigma}
\renewcommand{\t}{\tau}
\renewcommand{\th}{\theta}
\newcommand{\F}{\Phi}
\newcommand{\Ac}{{\mathcal A}}
\newcommand{\Oc}{{\mathcal O}}
\newcommand{\E}{{\mathbb E}}
\newcommand{\N}{{\mathbb N}}
\newcommand{\Z}{{\mathbb Z}}
\newtheorem{theorem}{Theorem}[section]
\newtheorem{proposition}[theorem]{Proposition}
\newtheorem{lemma}[theorem]{Lemma}
\newtheorem{corollary}[theorem]{Corollary}
\newtheorem{properties}[theorem]{Properties}
\newtheorem{fact}[theorem]{Fact}
\begin{document}
\begin{frontmatter}

\title{Behaviors of entropy on finitely generated groups}
\runtitle{Behaviors of entropy}

\begin{aug}
\author[A]{\fnms{J\'er\'emie} \snm{Brieussel}\corref{}\thanksref{t1}\ead[label=e1]{jeremie.brieussel@gmail.com}\ead[label=u1,url]{https://www.math.kyoto-u.ac.jp/\textasciitilde brieussel/}}
\runauthor{J. Brieussel}
\affiliation{Kyoto University}
\address[A]{Department of Mathematics\\
Kyoto University\\
Kitashirakawa Oiwake-cho \\
Sakyo-ku Kyoto-shi \\
606-8502 Kyoto\\
Japan \\
\printead{e1}\\
\printead{u1}} 
\end{aug}

\thankstext{t1}{Supported by Swiss NSF Grant 20-126689.}

\received{\smonth{10} \syear{2011}}
\revised{\smonth{3} \syear{2012}}

%
\begin{abstract}
A variety of behaviors of entropy functions of random walks on finitely
generated groups is presented, showing that for any $\frac{1}{2} \leq
\alpha\leq\beta\leq1$, there is a group $\G$ with measure $\m$
equidistributed on a finite generating set such that
\[
\liminf\frac{\log H_{\G,\m}(n)}{\log n}=\a,\qquad
\limsup \frac{\log H_{\G,\m}(n)}{\log n}=\b.
\]
The groups involved are finitely generated subgroups of the group of
automorphisms of an extended rooted tree. The return probability and
the drift of a simple random walk $Y_n$ on such groups are also
evaluated, providing an example of group with return probability
satisfying
\[
\liminf\frac{{\log}|{\log P}(Y_n=_\G1)|}{\log n}=\frac{1}{3},\qquad
\limsup\frac{{\log}|{\log P}(Y_n=_\G1)|}{\log n}=1
\]
and drift satisfying
\[
\liminf\frac{\log\E\|Y_n\|}{\log n}=\frac{1}{2},\qquad
\limsup\frac{\log\E\|Y_n\|}{\log n}=1.
\]
\end{abstract}

%
\begin{keyword}[class=AMS]
\kwd{05C81}
\kwd{20E08}
\kwd{60B15}
\end{keyword}
\begin{keyword}
\kwd{Random walk on groups}
\kwd{entropy}
\kwd{return probability}
\kwd{automorphisms of rooted tree}
\end{keyword}

\end{frontmatter}

\section{Introduction}

The characterization of groups by an asymptotic description of their
Cayley graphs may be dated back to Folner's criterion of amenability by
quasi-invariant subsets \cite{Fol}. Shortly after, Kesten showed
equivalence with the probabilistic criterion that a group $\G$ is
nonamenable if and only if the return probability $P(Y_n=1)$ of a
simple random walk $Y_n$ on $\G$ decays exponentially fast~\cite{Kes}.

This article focuses on three quantities that partially describe the
behavior of the diffusion process of a random walk $Y_n$ with step
distribution $\m$ on a group $\G$. Namely the entropy function
$H_{\G,\m}(n)=H(\m^{\ast n})=H(Y_n)$, the return probability $P(Y_n=1)$ and
the drift, also called rate of escape, $L_{\G,\m}(n)=\E_{\m^{\ast
n}}\|\gamma\|=\E\|Y_n\|$, where \mbox{$\|\cdot\|$} is a word norm,
\[
H(\m)=-\sum_{\gamma\in\G} \m(\gamma) \log\m(\gamma)
\]
is the Shannon entropy of the probability measure $\m$ and $\m^{\ast
n}$ is the $n$-fold convolution of $\m$, or, in other terms, the
distribution of $Y_n$. The return probability for a finitely supported
symmetric law $\m$ is a group invariant by \cite{PSC1}, which is not
known to be the case for entropy and drift. However, sublinearity of
the entropy or of the drift for some measure $\m$ with generating
support implies amenability \cite{KV}. In the present paper, the
measure $\m$ will always be equidistributed on a canonical finite
symmetric generating set of~$\G$.

The asymptotic behavior of these functions has been precisely
established in a number of cases that mainly include virtually
nilpotent groups, linear groups and wreath products
\cite{Var,PSC2,PSC3,Ers3,Ersdrift,Ers4,Rev} and a variety of less precise estimates exist for some groups
acting on rooted trees \cite{BV,Kai,Ers2,BKN,Bri2,AAV,AV,Zuk}.

The object of this article is to present examples of groups that
provide new asymptotic behaviors for these probabilistic functions.
Entropy, return probability and drift functions will not be precisely
computed, but only mild approximations in terms of their exponents.
%
\begin{definition}
The \textit{lower} and \textit{upper entropy exponents} of a random walk
$Y_n$ of law $\m$ on a group $\G$ are, respectively,
\[
\underline{h}(\G,\m)=\liminf\frac{\log H_{\G,\m}(n)}{\log n}
\quad\mbox{and}\quad \overline{h}(\G,
\m)=\limsup\frac{\log H_{\G,\m
}(n)}{\log n}.
\]
The \textit{lower} and \textit{upper return probability exponents} of a
random walk $Y_n$ of symmetric finitely supported law $\m$ on a group
$\G$ are, respectively:
\[
\underline{p}(\G)=\liminf\frac{{\log}|{\log P}(Y_n=1)|}{\log n}
\quad\mbox{and}\quad \overline{p}(\G)=
\limsup\frac{{\log}|{\log
P}(Y_n=1)|}{\log n}.
\]
The \textit{lower} and \textit{upper drift exponents} of a random walk $Y_n$
of law $\m$ on a group $\G$ are, respectively,
\[
\underline{\delta}(\G,\m)=\liminf\frac{\log L_{\G,\m}(n)}{\log
n} \quad\mbox{and}\quad \overline{
\delta}(\G,\m)=\limsup\frac{\log
L_{\G,\m}(n)}{\log n}.
\]
When equality holds, the \textit{entropy exponent} of the group $\G$ with
law $\m$ is $h(\G,\m)=\overline{h}(\G,\m)=\underline{h}(\G,\m
)$, the \textit{return probability exponent} of a group $\G$ is $p(\G
)=\underline{p}(\G)=\overline{p}(\G)$ and the \textit{drift exponent}
of the group $\G$ with law $\m$ is $\delta(\G,\m)=\overline
{\delta}(\G,\m)=\underline{\delta}(\G,\m)$. Return probability
exponents do not depend on the particular choice of the measure by
\cite{PSC1}.
\end{definition}
Computing the exponents gives moderate information on the function. By
\cite{PSC2}, the wreath product $\Z\wr\Z$ has return probability
$P(Y_n=1) \approx\break\exp(-n^{{1/3}}(\log n)^{{2/3}})$, and
the lamplighter $F \wr\Z$ with finite group $F$ has return
probability $P(Y_n=1) \approx\exp(-n^{{1/3}})$. Both have
return probability exponent $\frac{1}{3}$. Exponent $1$ does not imply
linearity of entropy or drift, nor exponential decay of return
probability, as seen by the exemples in \cite{Ers2}.

The groups considered here are directed groups of automorphisms of
extended rooted trees. The main construction combines the directed
groups of \cite{Bri2} with the notion of boundary permutational
extension introduced in \cite{BE} and used in \cite{Bri3} to exhibit
various behaviors of growth functions on groups. The entropy exponents
can be computed explicitely in terms of the group construction (see
Theorem \ref{MTE}), which leads to the following corollary:
%
\begin{theorem}\label{MT}
For any $\frac{1}{2} \leq\a\leq\b\leq1$, there exists a finitely
generated group $\G=\G(\a,\b)$ and a symmetric finitely supported
measure $\m$ such that
\[
\underline{h}(\G,\m)=\a\quad\mbox{and}\quad \overline{h}(\G,\m)=\b.
\]
In particular when $\a=\b$, there is a finitely generated group $\G
(\b)$ with measure $\m$ such that
\[
h(\G,\m)=\b.
\]
\end{theorem}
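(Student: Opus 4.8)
The strategy is to deduce Theorem~\ref{MT} from Theorem~\ref{MTE}, which reads the entropy exponents off from the combinatorial data defining the group. Recall that the construction of \cite{Bri2,BE,Bri3} attaches to each admissible sequence of parameters $(k_i)_{i\geq 1}$ a finitely generated group $\G=\G((k_i))$ acting on an extended rooted tree, equipped with its canonical finite symmetric generating set and the equidistributed measure $\m$; Theorem~\ref{MTE} then identifies $\underline{h}(\G,\m)$ with $\liminf_i e_i$ and $\overline{h}(\G,\m)$ with $\limsup_i e_i$, where $e_i$ is an explicit expression in the partial data $k_1,\dots,k_i$ --- essentially a ratio of logarithms of consecutive level sizes of the tree --- whose values always lie in $[\tfrac12,1]$. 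The lower bound $\tfrac12$ reflects a general estimate $H_{\G,\m}(n)\succeq\sqrt n$ valid throughout this class and incorporated into Theorem~\ref{MTE}, the upper bound $1$ being the trivial one. Granting this, Theorem~\ref{MT} reduces to a purely arithmetic interpolation problem: for every $\tfrac12\leq\a\leq\b\leq 1$, exhibit an admissible sequence $(k_i)$ with $\liminf_i e_i=\a$ and $\limsup_i e_i=\b$.

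I would first treat the diagonal case $\a=\b$: letting the parameters grow at a suitably chosen fixed rate makes $e_i$ converge, and by tuning that rate one gets $e_i\to\b$ for any prescribed $\b\in[\tfrac12,1]$, hence $h(\G,\m)=\b$; the two endpoints correspond to the extreme admissible growth rates of $(k_i)$ and recover the known borderline behaviors. For $\a<\b$ I would build $(k_i)$ in consecutive blocks $B_1,B_2,\dots$, alternating a \emph{rate-$\a$} recipe on the even-indexed blocks, which drives $e_i$ down toward $\a$, with a \emph{rate-$\b$} recipe on the odd-indexed blocks, which drives $e_i$ up toward $\b$. Taking each block far longer than all the preceding ones combined forces $e_i$ to come within $1/m$ of $\a$ at some point inside $B_{2m}$ and within $1/m$ of $\b$ at some point inside $B_{2m+1}$; combined with the fact that $e_i$ never leaves $[\a,\b]$ (see below), this yields $\liminf_i e_i=\a$ and $\limsup_i e_i=\b$.

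The crux is this last point, the transition analysis: at each change of recipe one must control the transient values of $e_i$ and check that they move from the value in one regime to the value in the next monotonically, or at any rate without overshoot, rather than spiking above $\b$ or dipping below $\a$ and thereby corrupting the $\limsup$ or the $\liminf$. The mechanism should be that $e_i$, being essentially a ratio of the form $\log(\text{partial product through level }i)/\log(\text{partial product through level }i{+}1)$, is a weighted average of the instantaneous rates of the two adjacent blocks and is therefore automatically trapped between them, hence in $[\a,\b]$. A subsidiary task is to confirm that the admissibility requirements of the construction --- lower bounds on the $k_i$ and validity of the boundary permutational extensions --- are mild enough to permit following either recipe for arbitrarily long stretches; this is the case for, e.g., sequences of the form $k_i=2^{\lfloor c_i\rfloor}$ with $c_i$ prescribed blockwise. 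The diagonal and off-diagonal constructions together establish both assertions of Theorem~\ref{MT}.
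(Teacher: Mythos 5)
Your reduction of Theorem \ref{MT} to Theorem \ref{MTE} plus an arithmetic interpolation of the exponent sequence is exactly the paper's route for $\frac12\leq\a\leq\b<1$: the exponent sequence is $\b_{\bar d}(n)=\log(d_0\cdots d_{k(n)})/\log n$ attached to the valency sequence $\bar d$, the diagonal case is Corollary \ref{exactentropy} (fix the asymptotic proportion of levels with valency $d$ versus $D$), and the oscillating case is Corollary \ref{oscil} (long alternating blocks of $d$'s and $D$'s, with precisely the transition control you describe: $\b(n)$ changes by $O(1/\log n)$ per level, so it stays in $[\a-C/\log n,\b+C/\log n]$ without overshoot).

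However, there is a genuine gap at $\b=1$. Theorem \ref{MTE} applies only to finitely generated saturated directed groups, and finite generation forces the valency sequence $\bar d$ to be bounded, say by $D$; Fact \ref{bornesup} then gives $\b_{\bar d}(n)\leq\b_D+C/\log n$ with $\b_D=\bigl(2-\tfrac{\log(D-1)}{\log D}\bigr)^{-1}<1$, so $\overline h(\G,\m)\leq\b_D<1$ for every group in this class. No admissible block recipe, however extreme, can push the limsup to $1$: the value $1$ is only approachable as $D\to\infty$ across different groups, not attainable by a single one. The paper says so explicitly (Theorem \ref{MTE} ``implies theorem \ref{MT} for $\b<1$'') and devotes Section \ref{highorder} to the missing case: one builds a non-saturated directed group $\D(S',H'F')$ on a tree of unbounded valency by grafting, at a sparse set of levels $l_i$, large finite quotients of the free products $S_l\ast H_lF$; a localisation lemma (Proposition \ref{local}) shows that the first $m$ values of the entropy function depend only on finitely many levels of the construction, so one can alternate between scales where $\D$ inherits the sublinear entropy of its quotient $\G(S,HF)$ (Proposition \ref{LA}) and scales where it behaves like a non-amenable group with linear entropy (Proposition \ref{HA}). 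This yields Corollary \ref{corh} ($\underline h=\a$, $\overline h=1$ for any $\frac12\leq\a\leq1$), and only together with the $\b<1$ argument does one get the full Theorem \ref{MT}. Your proposal as written does not address this case.
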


Entropy is related to growth because over a finite set, entropy is
maximized for equidistribution probability, so that
\[
h_{\G,\m}(r)=h\bigl(\m^{\ast n}\bigr)\leq\log\# \operatorname{supp}\bigl(
\m^{\ast n}\bigr)=\log b_{\G,S}(r).
\]
The groups of Theorem \ref{MT} have sublinear entropy and often
exponential growth. However, most of the groups $\G_\o$ of
\cite{Bri3} have intermediate growth and are extended directed groups of a
binary rooted tree, so by Theorem \ref{MTE}, they all have entropy
exponent $h(\G,\m)=\frac{1}{2}$.

The return probability and drift exponents of extended directed groups
can be estimated from above and from below, but the bounds do not match
in general. A~specific example provides the following theorem:
%
\begin{theorem}\label{MTT}
There exists a finitely generated group $\D$ and a symmetric finitely
supported measure $\m'$ such that
\[
\underline{p}(\D)=\tfrac{1}{3},\qquad \overline{p}(\D)=1
\quad\mbox{and}\quad
\underline{\delta}\bigl(\D,\m'\bigr)=\tfrac{1}{2},\qquad \overline{
\delta}\bigl(\D,\m'\bigr)=1.
\]
\end{theorem}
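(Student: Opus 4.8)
The plan is to realize $\D$ as an extended directed group $\D_\o$ of automorphisms of an extended rooted tree attached to a carefully chosen sequence $\o$, following the construction that combines the directed groups of \cite{Bri2} with the boundary permutational extension of \cite{BE} used in \cite{Bri3} and in Theorem \ref{MT}. Recall that $\o$ prescribes, level by level, both the local branching data of the tree and the finite permutation groups glued along the boundary. The idea is to let $\o$ alternate two kinds of long stretches: \emph{sparse} stretches, chosen so that the portion of $\D$ acting on the part of the tree explored by the walk at the end of such a stretch is (virtually) a lamplighter $F \wr \Z$, and \emph{dense} stretches, chosen so that the corresponding portion contains an iterated wreath product of exponential growth, as in the exponential-growth instances of \cite{Bri3}. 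I equip $\D$ with $\m'$, the uniform measure on its canonical finite symmetric generating set, so that $Y_n$ is the simple random walk.

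The engine of the proof is the pair of general two-sided bounds for $P(Y_n=_\D 1)$ and $\E\|Y_n\|$ in terms of the position of the walk on the tree, which parallel the entropy estimates of Theorem \ref{MTE}. At the end time $n_j$ of a sparse stretch the walk only feels a lamplighter-over-$\Z$ quotient, so a Dirichlet-form comparison in the style of Pittet--Saloff-Coste (cf.\ the treatment of $F\wr\Z$ in \cite{PSC2}) gives $P(Y_{n_j}=_\D 1)\geq\exp(-Cn_j^{1/3})$, while the word length of $Y_{n_j}$ is controlled by the $\Z$-displacement up to a bounded tree contribution, so $\E\|Y_{n_j}\|\leq Cn_j^{1/2}$. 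At the end time $m_j$ of a dense stretch the isoperimetric profile of the relevant piece of $\D$ has grown too fast to allow the walk to return, so $P(Y_{m_j}=_\D 1)\leq\exp(-c\,m_j^{1-\e})$ for every $\e>0$, and the walk escapes ballistically through the deep levels created during the stretch, so $\E\|Y_{m_j}\|\geq c\,m_j^{1-\e}$. The universal bounds $|\log P(Y_n=_\D 1)|\leq Cn$ and $\|Y_n\|\leq n$, together with the general lower bounds valid for all extended directed groups ($P(Y_n=_\D 1)\geq\exp(-Cn^{1/3+o(1)})$ and $\E\|Y_n\|\geq c\sqrt n$, the latter from the surjection $\D\twoheadrightarrow\Z$ with non-degenerate induced walk), pin the four quantities from the other side.

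It then remains to fix the lengths of the sparse and dense stretches of $\o$ lacunary enough that: (i) at the end of each stretch the walk has fully settled into the regime that stretch dictates, so the extreme values $\tfrac{1}{3},\,1$ for $p$ and $\tfrac{1}{2},\,1$ for $\d$ are genuinely attained along the relevant subsequences; and (ii) each stretch is so much longer than everything before it that the transition windows are negligible on the logarithmic scale $\frac{\log(\cdot)}{\log n}$. This is exactly the interpolation scheme already used for entropy in Theorem \ref{MT} and for growth in \cite{Bri3}, now run so as to control two pairs of exponents at once; it produces $\underline{p}(\D)=\tfrac{1}{3}$, $\overline{p}(\D)=1$, $\underline{\d}(\D,\m')=\tfrac{1}{2}$ and $\overline{\d}(\D,\m')=1$ with a single group $\D$ and a single measure $\m'$.

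The delicate point is the sharpness of the two regimes, and it lands on opposite sides for the two functions. For the return probability the hard half is the dense-regime \emph{upper} bound $\exp(-c\,m_j^{1-\e})$: it requires a genuine lower bound on the isoperimetric profile of $\D$ at scale $m_j$, which I would obtain by exhibiting inside $\D$, at that scale, a subgroup (a wreath product, or a rigid level stabilizer of a branch-type subgroup) with large F\o lner function and checking that the Dirichlet-form comparison survives restriction to it. For the drift the hard half is instead the sparse-regime \emph{upper} bound $Cn_j^{1/2}$: one must show that along that subsequence the word norm of $Y_{n_j}$ is comparable to its abelianized displacement, i.e.\ that with high probability the "tree part" of $Y_{n_j}$ has bounded length, which is precisely where the detailed choice of the sparse stretches and of the boundary permutation groups is used. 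Threading all four exponent constraints through one sequence $\o$, without one regime contaminating another, is the real bookkeeping content of the argument.
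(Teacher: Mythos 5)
Your high-level architecture coincides with the paper's: a lacunary construction that at some scales collapses (up to finite kernel) onto the lamplighter-type group $\G=F\wr_{\partial T_2}D_\infty$ of Proposition \ref{specific} (whence $\tfrac13$ and $\tfrac12$ along a subsequence), at other scales looks ``large'' (whence exponent $1$), and uses the fact that balls of radius $R$ only see the first $O(\log R)$ levels (Proposition \ref{local}) to keep the regimes from contaminating each other. But two steps do not work as written. The main one is your mechanism for the exponent-$1$ regime: ``iterated wreath products of exponential growth'' and ``an isoperimetric profile that has grown too fast'' do not yield $P(Y_{m_j}=1)\leq e^{-cm_j^{1-\e}}$ or $\E\|Y_{m_j}\|\geq cm_j^{1-\e}$. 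The lamplighter $F\wr\Z$ already has exponential growth yet $p=\tfrac13$ and $\d=\tfrac12$, and any fixed iterate of wreath products of amenable groups has return-probability exponent bounded away from $1$; the introduction explicitly warns that exponent $1$ follows neither from exponential growth nor from large Folner functions. What the paper does instead (Propositions \ref{HA}, \ref{local} and Theorems \ref{thmp}, \ref{thml}) is graft, at a sparse set of levels $l_i$, extra branches carrying large \emph{finite quotients of the non-amenable free product} $S_l\ast(H_l\times F)$: the auxiliary group $\overline{\D}_i$ with the genuine free product inserted is non-amenable, so Kesten and Kaimanovich--Vershik give $e^{-cm}$ and $cm$ outright, and residual finiteness of free products of finite groups plus localisation allow truncation to a finite quotient without altering the first $m_i$ values of the functions. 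Some device of this kind (non-amenability in the limit) is needed, not merely fast Folner growth.

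The second problem is your ``general lower bound valid for all extended directed groups, $P(Y_n=_\D 1)\geq\exp(-Cn^{1/3+o(1)})$'': this cannot hold for all $n$ for the group you are building, since it would force $\overline{p}(\D)\leq\tfrac13$ and contradicts both $\overline{p}(\D)=1$ and your own dense-regime estimate. The inequality needed to pin $\underline{p}(\D)\geq\tfrac13$ points the other way: $\G$ is a quotient of $\D$ with identified generators, so $P(Y_n=_\D 1)\leq P(Y_n=_\G 1)\leq e^{-cn^{1/3-\e}}$ for \emph{all} $n$, while the matching estimate $P(Y_{n_j}=_\D 1)\geq C^{-1}P(Y_{n_j}=_\G 1)\geq e^{-Cn_j^{1/3}}$ along the sparse subsequence comes from finiteness of the kernel of $\D_j\twoheadrightarrow\G$ at that scale (Proposition \ref{LA}), not from a Dirichlet-form comparison; the drift is handled symmetrically via $L_{\G,\m}(n)\leq L_{\D,\m'}(n)\leq L_{\G,\m}(n)+K$. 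Note finally that the exact value $\tfrac13$ for $p(\G)$ is itself a nontrivial input: the general Theorem \ref{RPE} only gives $\tfrac13\leq p(\G)\leq\tfrac12$ on the binary tree, and closing that gap uses the abelianness of $F$ and the $2$-covering of the half-line Schreier graph by $Cay(D_\infty)$ in Proposition \ref{specific}.
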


These theorems show that the phenomenon of oscillation studied in
\cite{Bri3} for growth function exponents (see also \cite{Bri1} and
\cite{KP}) also occurs for entropy, return probability and drift. The
existence of a group with such drift exponents was mentioned without
proof in \cite{Ers6}.

The article is structured as follows. The main construction of extended
directed groups of a rooted tree $T_{\bar d}$ is described in Section
\ref{involved}, where is also presented a side application to the
Haagerup property of groups with nonuniform growth. Section~\ref{srp}
presents the basic tool to study these groups, which is the rewriting
process, leading to the notions of minimal tree and activity, related
to inverted orbits of permutational extensions defined in \cite{BE}.
Section \ref{sectionRW} relates the expected activity of a random walk
$Y_n$ to the exponent sequence, which depends only on the tree $T_{\bar
d}$. At this stage, one can prove the main Theorem \ref{MTE} on
entropy, which implies Theorem \ref{MT} for $\b<1$ and allows us to
derive estimates on the drift. The frequency of oscillation of entropy
exponents is also studied in Section \ref{smte}. The main estimates on
return probability of extended directed groups are given in Theorem
\ref{RPE}, with a specific example related to the lamplighter group.
Section \ref{highorder} is devoted to another construction adapted
from \cite{KP} and similar to \cite{Ers2}, which allows us to obtain
the case $\b=1$ in Theorem \ref{MT} and to prove Theorem \ref{MTT}.
A~generalization of the construction of extended directed groups is
presented in Section \ref{gls}, followed by some comments and
questions in the final Section~\ref{cq}.

\section{The groups involved}\label{involved}

\subsection{Directed groups}\label{directedgroups} Given a sequence
$\bbar{d}=(d_j)_{j \geq0}$ of integers $d_j \geq2$, the
\textit{spherically homogeneous rooted tree} $T_{\bar{d}}$ is the graph with
vertices $v=(i_1 i_2 \cddots i_k)$ with $i_j$ in $\{1,2,\ldots,d_{j-1}\}
$, including the empty sequence $\varnothing$ called the root, and edges
$\{(i_1 i_2 \cddots i_k),(i_1 i_2 \cddots i_k i_{k+1})\}$. The index $k$ is
called the \textit{depth} or \textit{level} of $v$, denoted $|v|=k$.

The boundary $\partial T_{\bar d}$ of the tree $T_{\bar d}$ is the
collection of infinite sequences $x=(i_1 i_2 \cddots)$ with $i_j$ in $\{
1,2,\ldots,d_{j-1}\}$.

The group $\operatorname{Aut}(T_{\bar d})$ of automorphisms of the rooted tree is the
group of graph automorphisms that fix the root $\varnothing$. The
following isomorphism is canonical:
%
\begin{equation}
\label{iso} \operatorname{Aut}(T_{\bar{d}}) \simeq \operatorname{Aut}(T_{ \s\bar{d}}) \wr
S_{d_0}.
\end{equation}
The symbol $\wr$ represents the permutational wreath product $G\wr
S_d=(G\times\cdots\times G) \rtimes S_d$ where $S_d$ acts on the
direct product of $d$ copies of $G$ by permutation, and $\s$
represents the shift on sequences, so that $\s\bbar d=(d_1,d_2,\ldots
)$. As the isomorphism (\ref{iso}) is canonical, we identify an
element and its image and write
%
\begin{equation}
\label{wreathim} g=(g_1,\ldots,g_{d_0})\s
\end{equation}
with $g$ in $\operatorname{Aut}(T_{\bar{d}})$, the $g_i$ in $\operatorname{Aut}(T_{\s\bar{d}})$
and $\s$ in $S_{d_0}$. The automorphism $g_t$ represents the action of
$g$ on the subtree $T_t$, isomorphic to $T_{\s\bar d}$, hanging from
vertex~$t$, and the rooted component $\s$ describes how these subtrees
$(T_t)_{t=1\cddots d_0}$ are permuted.

With notation (\ref{wreathim}), for any vertex $ty$ in $T_{\bar d}$,
one has $g(ty)=\s(t)g_t(y)$. If $f=(f_1,\ldots,f_{d_0})\t$, then
$(gf)(ty)=(f \circ g)(ty)=\t(\s(t)) f_{\s(t)}(g_t(y))=\break(\s\t
)(t)(g_tf_{\s(t)})(y)$, so that $gf=(g_1f_{\s(1)},\ldots,g_{d_0}f_{\s
(d_0)})\s\t$.

An automorphism $g$ is \textit{rooted} if $g=(1,\ldots,1)\s$ for a
permutation $\s$ in $S_{d_0}$. The group of rooted automorphisms of
$T_{\bar d}$ is obviously isomorphic to $S_{d_0}$ and can be realized
canonically as a subgroup or a quotient of $\operatorname{Aut}(T_{\bar d})$.

The set $H_{\bar d}$ of automorphisms \textit{directed by the geodesic
ray} $1^\infty=(111\cddots)$ in $T_{\bar d}$ is defined recursively. An
element $h$ is in $H_{\bar d}$ if there exists $h'$ in $H_{\s\bar d}$
and $\s_2,\ldots,\s_{d_0}$ rooted in $\operatorname{Aut}(T_{\s\bar d})$ such that
%
\begin{equation}
\label{defh} h=\bigl(h',\s_2,\ldots,\s_{d_0}
\bigr).
\end{equation}
There is a canonical isomorphism of abstract groups, $H_{\bar d} \simeq
S_{d_1} \times\cdots\times S_{d_1} \times H_{\s\bar d}$ with $d_0-1$
factors $S_{d_1}$. As a consequence, $H_{\bar d}$ is the uncountable
but locally finite product
%
\begin{equation}
\label{Habstrait} H_{\bar d} \simeq S_{d_1} \times\cdots\times
S_{d_1} \times S_{d_2} \times\cdots\times S_{d_2}
\times\cdots
\end{equation}
with $d_{l-1}-1$ factors $S_{d_l}$, indexed by $\{2,\ldots,d_{l-1}\}$.
Under this isomorphism, denote $h=(\s_{1,2},\ldots,\s_{1,d_0},\s_{2,2},\ldots,\s_{2,d_1},\ldots)$ with $\s_{k,t}$ in $S_{d_k}$.

The action of $h \in H_{\bar d}$ on the rooted tree $T_{\bar d}$ and
its boundary $\partial T_{\bar d}$ is given by
\[
h\bigl(1^{k-1}t i_{k+1}i_{k+2}\cddots
\bigr)=1^{k-1}t\s_{k,t}(i_{k+1})i_{k+2}
\cddots,
\]
where each vertex or boundary element is uniquely written $1^{k-1}t
i_{k+1}i_{k+2}\cddots$ with $t$ in $\{2,\ldots,d_{k-1}\}$ and $k\geq1$
integer. The notation $1^k$ is a shortcut for $11\cddots1$ with $k$ terms.
%
\begin{definition}
A group $G$ of automorphisms of $T_{\bar d}$ is called \textit{directed}
when it admits a generating set of the form $S \cup H$ where $S$ is
included in the group $S_{d_0}$ of rooted automorphisms, and $H$ is
included in $H_{\bar d}$. Denote by $G=G(S,H)$ such a directed group.
Then $G(S_{d_0},H_{\bar d})$ is the (uncountable) \textit{full directed
group} of $T_{\bar d}$. Say a group $G(S,H)$ is \textit{saturated} if
$S=S_{d_0}$ is the full group of rooted automorphism, and $H$ is a
group such that the projection of the equidistribution measure over $H$
onto each factor $S_{d_l}$ in (\ref{Habstrait}) is the
equidistribution measure on~$S_{d_l}$. The full directed group is
obviously saturated.
\end{definition}

Assume the sequence $\bbar d=(d_i)_i$ is bounded taking finitely many
values $e_1,\ldots,e_T$, then the direct product $H=S_{e_1}\times\cdots
\times S_{e_T}$ embeds diagonally into $S_{d_1} \times\cdots\times
S_{d_1} \times\cdots\simeq H_{\bar d}$ (where the factors $S_{e_t}$
embeds diagonaly into the subproduct of factors for which $d_l=e_t$).
With the obvious identifications, the group $G(S_{d_0},H)$ is a
finitely generated saturated directed group. Note that this precise
group is minimal among saturated directed groups of~$T_{\bar d}$.

\subsection{Extended directed groups} For a fixed $x$, equip the
finite set $xT_d=\{x,x1,\ldots,xd\}$ with a structure of rooted
tree\vadjust{\goodbreak}
with root $x$ and one level $\{x1,\ldots,\allowbreak xd\}$. The \textit{extended
boundary} $E\partial T_{\bar d}$ of the rooted tree $T_{\bar d}$ is
obtained by replacing each boundary point $x$ by a short rooted tree
$xT_d$:
\[
E\partial T_{\bar d}=\{xT_d|x \in\partial T_{\bar d}
\}=\bigl\{(x;x1,\ldots,xd) |x \in\partial T_{\bar d}\bigr\}.
\]
Call an \textit{extended tree} the set $ET_{\bar d}=T_{\bar d} \sqcup
E\partial T_{\bar d}$. Its group of automorphisms is the group
%
\begin{equation}
\label{pwp} \operatorname{Aut}(ET_{\bar d}) =S_d \wr_{\partial T_{\bar d}}
\operatorname{Aut}(T_{\bar d})=\{\varphi\dvtx\partial T_{\bar d} \rightarrow
S_d\} \rtimes \operatorname{Aut}(T_{\bar d}),
\end{equation}
where the action of the group $\operatorname{Aut}(T_{\bar d})$ on functions is given
by $g\doot\varphi(x)=\varphi(gx)$ so that $(g_1g_2).\varphi
=g_2\doot(g_1\doot\varphi)$. The group $\operatorname{Aut}(ET_{\bar d})$ of automorphisms of
an extended tree was introduced by Bartholdi and Erschler in \cite{BE}
as a ``permutational wreath product over the boundary.'' The wreath
product isomorphism (\ref{iso}) extends well:
%
\begin{proposition}\label{CEpourET}
There is a canonical isomorphism
\[
\operatorname{Aut}(ET_{\bar d}) \simeq \operatorname{Aut}(ET_{\s\bar d}) \wr_{\{1,\ldots, d_0\}}
S_{d_0}.
\]
\end{proposition}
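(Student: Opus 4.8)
The plan is to prove Proposition \ref{CEpourET} by mimicking the canonical isomorphism (\ref{iso}) for $Aut(T_{\bar d})$, and then checking that the boundary-function factor splits compatibly with the decomposition of the boundary $\partial T_{\bar d}$ into the $d_0$ subtree boundaries. The first step is the geometric observation that the extended tree $ET_{\bar d}$ decomposes, after removing the root and its $d_0$ incident edges, as the disjoint union of $d_0$ copies of $ET_{\s \bar d}$, the $t$-th copy being $ET_{T_t} := T_t \sqcup E\partial T_t$ where $T_t$ is the subtree hanging from vertex $t$. An automorphism $\f g$ of $ET_{\bar d}$ must permute these $d_0$ isomorphic pieces among themselves, inducing a permutation $\s \in S_{d_0}$, and thus restricts to a tuple of isomorphisms $ET_{T_t} \to ET_{T_{\s(t)}}$, each of which may be read as an element of $Aut(ET_{\s \bar d})$. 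This already gives a bijection of sets $Aut(ET_{\bar d}) \to Aut(ET_{\s \bar d})^{d_0} \rtimes S_{d_0}$; the content is that it is a group isomorphism.

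Concretely, using the semidirect decomposition (\ref{pwp}) on both sides, I would track separately the $Aut(T_{\bar d})$-component and the boundary-function component. For the tree component there is nothing new: it is exactly (\ref{iso}), so an element $g \in Aut(T_{\bar d})$ with image $(g_1,\dots,g_{d_0})\s$ under (\ref{iso}) contributes the tuple $(g_1,\dots,g_{d_0})$ and the rooted permutation $\s$. For the boundary-function component, observe $\partial T_{\bar d} = \bigsqcup_{t=1}^{d_0} t \,\partial T_t$, so a function $\f : \partial T_{\bar d} \to S_d$ is the same datum as a $d_0$-tuple of functions $\f_t : \partial T_{\s \bar d} \to S_d$ via $\f_t(y) = \f(ty)$. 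The claim is that under this identification the element $\f g \in Aut(ET_{\bar d})$ corresponds to $\bigl( (\f_1,g_1),\dots,(\f_{d_0},g_{d_0}) \bigr)\s \in Aut(ET_{\s \bar d}) \wr S_{d_0}$, with care about which index the rooted permutation $\s$ shuffles the pairs to (namely $\f_t g_t$ lives in the $\s(t)$-slot, mirroring the formula $gf=(g_1f_{\s(1)},\dots,g_{d_0}f_{\s(d_0)})\s\t$ established in the excerpt).

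The main obstacle — really the only place where a computation is needed — is verifying that this map respects multiplication, because the two semidirect-product structures interact: in $Aut(ET_{\bar d})$ the product of $\f g$ and $\p f$ involves the action $g.\p(x) = \p(gx)$, while in the wreath product $Aut(ET_{\s \bar d}) \wr S_{d_0}$ the product involves both the permutation of slots by $\s$ and, inside each slot, the product in $Aut(ET_{\s \bar d})$, which itself has its own twisting of boundary functions by subtree automorphisms. I would simply expand $(\f g)(\p f)$ using the convention $(g_1g_2).\f=g_2.(g_1.\f)$ from (\ref{pwp}), compute its boundary-function part at a point $ty$ — getting something like $\f(ty)\cdot \p(g(ty)) = \f_t(y)\cdot \p_{\s(t)}(g_t(y)) = \bigl(\f_t \cdot (g_t.\p_{\s(t)})\bigr)(y)$ — and match this termwise against the slot-$\s\t(t)$ component of the wreath-product multiplication of the two tuples. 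The associativity-style bookkeeping of which permutation acts on which index is the fiddly part, but it is formally identical to the computation already carried out in the excerpt for $Aut(T_{\bar d})$ and poses no conceptual difficulty. Finally I would note that the identity and inverses are preserved (immediate from the formula), concluding that the set bijection is a group isomorphism, and that it is canonical since every choice made (labelling of subtrees, the function $\f_t(y)=\f(ty)$) is itself canonical.
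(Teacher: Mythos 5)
Your proposal is correct and follows essentially the same route as the paper: define $\F(\f g)=(\f_1g_1,\dots,\f_{d_0}g_{d_0})\s$ via the splitting $\partial T_{\bar d}=\bigsqcup_t t\,\partial T_t$ and the classical isomorphism (\ref{iso}), then verify multiplicativity by the pointwise computation $\psi_t(y)=\f(ty)\f'(g(ty))=\f_t(y)(g_t.\f'_{\s(t)})(y)$, which is exactly the key identity checked in the paper's proof.
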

\begin{pf}
Any $\gamma$ in $\operatorname{Aut}(ET_{\bar d})$ is decomposed $\gamma=\varphi g$,
with $g \in \operatorname{Aut}(T_{\bar d})$ and $\varphi\dvtx\partial T_{\bar d}
\rightarrow S_d$. The classical isomorphism (\ref{iso}) provides a
decomposition $g=(g_1,\ldots,g_d)\s$. Also the boundary of the tree
can be decomposed into $d_0$ components $\partial T_{\bar d} = \partial
T_1 \sqcup\cddots\sqcup\partial T_d$ with $T_t \simeq T_{\s\bar d}$
the tree descended from the first level vertex $t$. Set $\varphi_t=\varphi|_{\partial T_t}$ the restriction of $\varphi$. With this
notation, the application $\F$ realizing the canonical isomorphism is
given by
\[
\F(\gamma)=(\varphi_1 g_1,\ldots, \varphi_d
g_d)\s\in \operatorname{Aut}(ET_{\s
\bar d}) \wr_{\{1,\ldots, d_0\}}
S_{d_0}.
\]
In order to prove the proposition, it is sufficient to check that $\F
(\gamma\gamma')=\F(\gamma)\F(\gamma')$.

On the one hand, $\gamma\gamma'=\varphi g\varphi' g'=\varphi
(g\doot\varphi') gg'= \psi gg'$, with $\psi=\varphi(g\doot\varphi')$. As
above set $\psi_t=\psi|_{\partial T_t}$, and as classicaly
$gg'=(g_1g'_{\s(1)},\ldots,g_d g'_{\s(d)})\s\s'$, the embedding is
\[
\F\bigl(\gamma\gamma'\bigr)=\bigl(\psi_1
g_1 g'_{\s(1)},\ldots,\psi_d
g_d g'_{\s
(d)}\bigr)\s\s'.
\]
On the other hand,
\begin{eqnarray*}
\F(\gamma)\F\bigl(\gamma'\bigr) &=& (\varphi_1g_1,
\ldots, \varphi_dg_d)\s \bigl(\varphi_1'g_1',
\ldots,\varphi_d'g_d'\bigr)
\s'
\\
&=& \bigl(\varphi_1g_1 \varphi_{\s(1)}'g_{\s(1)}',
\ldots,\varphi_dg_d \varphi_{\s(d)}'g_{\s(d)}'
\bigr)\s\s'
\\
&=& \bigl(\varphi_1\bigl(g_1\doot\varphi_{\s(1)}'
\bigr) g_1g_{\s(1)}',\ldots,\varphi_d
\bigl(g_d\doot\varphi_{\s(d)}'\bigr)
g_dg_{\s(d)}'\bigr)\s\s'.
\end{eqnarray*}
There remains to check $\psi_t=\varphi_t(g_t\doot\varphi_{\s(t)}')$,
and indeed for any $y \in\partial T_t \simeq\partial T_{\s\bar d}$,
\begin{eqnarray*}
\psi_t(y) &=& \psi(ty)=\bigl(\varphi\bigl(g\doot\varphi'
\bigr)\bigr) (ty)=\varphi (ty) \bigl(\bigl(g\doot\varphi'\bigr) (ty)
\bigr)=\varphi(ty)\varphi'(g\doot ty)
\\
&=& \varphi(ty) \varphi'\bigl(\s(t) (g_t\doot y)\bigr)=
\varphi_t(y)\varphi_{\s
(t)}'(g_t\doot y)=
\varphi_t(y) \bigl(g_t\doot \varphi_{\s(t)}'
\bigr) (y).
\end{eqnarray*}
\upqed
\end{pf}\eject

Functions over $\partial T_{\bar d}$ supported on $1^\infty=1111\cddots
$ will play a specific role. For $f \in S_d$, denote by $\varphi_f
\dvtx\partial T_{\bar d} \rightarrow S_d$ the function $\varphi_f(1^\infty)=f$ and $\varphi_f(x)=id$ if $x \neq1^\infty$. Note
that for $h$ in $H_{\bar d}$, one has $h \varphi_f= \varphi_f h$ in
$\operatorname{Aut}(ET_{\bar d})$ because $h$ fixes the ray $1^\infty$.
%
\begin{definition}
A group $\G$ of automorphisms of the extended tree $ET_{\bar d}$ is
called \textit{directed} when it admits a generating set of the form $S
\cup H \cup F$ where $S$ is rooted, $H$ is included in $H_{\bar d}$ and
elements of $F$ have the form $\varphi_f$ for $f \in S_d$. Denote by
$\G=\G(S,H,F)$ such a directed group. Say a group $\G
(S_{d_0},H,F)<\operatorname{Aut}(ET_{\bar d})$ is \textit{saturated} if $G(S_{d_0},H)$ is
saturated. Saturation implies that equidistribution on $H \times F$
projects to equidistribution on each factor $S_{d_k}$ of (\ref
{Habstrait}) and on the factor $F$ which justifies the notation \mbox{$\G
(S_{d_0},H\times F)$} for saturated directed groups.
\end{definition}

Unless mentioned otherwise, use for the directed group $\G(S,H,F)$ the
set $S \cup H \times F$, where both $S$ and $H \times F$ are finite
groups themselves (hence symmetric) in the case of finitely generated
saturated directed groups.

Denote by $H_l$ the restriction of $H<H_{\bar d}$ to levels $\geq l$,
that is, the projection of $H$ to $H_{\s^l \bar d}=S_{d_{l+1}} \times
\cdots\times S_{d_{l+1}} \times\cdots\,$. Also denote by $S_l$ the
subgroup of $S_{d_l}$ generated by the projections of $H$ on the
$d_{l-1}-1$ factors $S_{d_l}$ of (\ref{Habstrait}).
%
\begin{proposition}\label{wpd} Let $\G(S,H,F)<\operatorname{Aut}(ET_{\bar d})$ be a
directed group. Then there is a canonical embedding
\[
\G(S,H,F) \hookrightarrow\G(S_1,H_1,F) \wr S.
\]
More generally,
\[
\G(S_l,H_l,F) \hookrightarrow\G(S_{l+1},H_{l+1},F)
\wr S_l.
\]
\end{proposition}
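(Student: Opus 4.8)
The plan is to derive the embedding $\G(S,H,F) \hookrightarrow \G(S_1,H_1,F) \wr S$ directly from the canonical isomorphism of Proposition \ref{CEpourET}, namely $Aut(ET_{\bar d}) \simeq Aut(ET_{\s \bar d}) \wr_{\{1,\dots,d_0\}} S_{d_0}$, by simply restricting this isomorphism to the subgroup $\G(S,H,F)$ and identifying the image. Since $\G(S,H,F)$ is generated by $S \cup H \cup F$, it suffices to track the image of each generator under $\F$ and check that it lands in $\G(S_1,H_1,F)\wr S$. The general statement for level $l$ then follows by iterating this one-step embedding, noting that the restriction of $\G(S,H,F)$ to the subtree hanging from the vertex $1$ is (by the directed structure) again a directed group of the form $\G(S_1,H_1,F)$ acting on $ET_{\s\bar d}$.

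First I would compute $\F$ on the three types of generators. For a rooted generator $s \in S \subseteq S_{d_0}$, one has $s = (1,\dots,1)s$ in notation (\ref{wreathim}) and $\f_s \equiv id$ on the boundary, so $\F(s) = (id,\dots,id)s \in \G(S_1,H_1,F) \wr S$ trivially. For a boundary generator $\f_f$ with $f \in S_d$ supported on $1^\infty$: since $\f_f$ vanishes off the ray through vertex $1$, its restriction to $\partial T_1$ is again $\f_f$ (viewed on $ET_{\s \bar d}$), and its restrictions to $\partial T_t$ for $t \geq 2$ are trivial; the underlying tree automorphism is the identity, so $\F(\f_f) = (\f_f, id, \dots, id)\cdot id$, which lies in $\G(S_1,H_1,F) \wr S$ since $\f_f \in F$ in the first coordinate. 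For a directed generator $h \in H \subseteq H_{\bar d}$, the defining relation (\ref{defh}) gives $h = (h', \s_2, \dots, \s_{d_0})$ with $h' \in H_{\s \bar d}$ and $\s_2,\dots,\s_{d_0}$ rooted; since $h$ fixes the ray $1^\infty$, it carries no boundary-function part, so $\F(h) = (h', \s_2, \dots, \s_{d_0})\cdot id$. Here $h'$ lies in the projection $H_1$ of $H$ to levels $\geq 1$ and each $\s_t \in S_{d_1}$ lies in $S_1$ by definition of $S_1$ as the group generated by the projections of $H$ onto the $d_0 - 1$ factors $S_{d_1}$; hence the first coordinate $h'$ is an element of $\G(S_1,H_1,F)$ and the remaining coordinates $\s_2,\dots,\s_{d_0}$ are rooted automorphisms in $S_{d_1} \supseteq S_1$, again lying in $\G(S_1,H_1,F)$.

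Having checked all generators, $\F$ maps $\G(S,H,F)$ into the subgroup of $Aut(ET_{\s\bar d}) \wr S_{d_0}$ whose base-group coordinates lie in $\G(S_1,H_1,F)$ and whose top coordinate lies in $S$; that subgroup is exactly $\G(S_1,H_1,F) \wr S$. Injectivity is automatic since $\F$ is an isomorphism on all of $Aut(ET_{\bar d})$. For the general statement, I would argue by induction on $l$: the one-step embedding just established, applied to the tree $T_{\s^l \bar d}$ with the directed group $\G(S_l, H_l, F)$ — whose rooted part is $S_l \subseteq S_{d_l}$, whose directed part is $H_l \subseteq H_{\s^l \bar d}$, and whose boundary part is still $F$ — yields $\G(S_l,H_l,F) \hookrightarrow \G(S_{l+1}, H_{l+1}, F) \wr S_l$, where one checks that the level-$1$ projection of $H_l$ is $H_{l+1}$ and the group generated by the level-$1$ rooted parts of elements of $H_l$ is $S_{l+1}$, matching the definitions.

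The only mildly delicate point — the main obstacle, such as it is — is bookkeeping: one must verify that the projection of $H$ to the subtree hanging from vertex $1$ really is $H_1$ as defined (the projection of $H$ to $H_{\s \bar d}$), and that the rooted automorphisms $\s_2,\dots,\s_{d_0}$ appearing in (\ref{defh}), together with those arising from $H_1$'s own directed structure, generate no more than $S_1$ in the first coordinate. This is immediate from the definitions of $H_l$ and $S_l$ given just before the proposition, but it is worth spelling out to make the induction self-consistent; no genuine computation is involved.
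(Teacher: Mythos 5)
Your proposal is correct and follows essentially the same route as the paper, which likewise deduces the embedding from the canonical isomorphism of Proposition \ref{CEpourET} by checking that the generators map to $s=(1,\dots,1)s$, $h=(h',\s_2,\dots,\s_{d_0})$ and $\f_f=(\f_f,1,\dots,1)$, with coordinates in $\G(S_1,H_1,F)$ and root part in $S$. Your extra bookkeeping on $S_l$, $H_l$ and the iteration to general $l$ just spells out what the paper leaves implicit.
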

\begin{pf}
The embedding is clear from Proposition \ref{CEpourET} because the
images of the generators are given by $s=(1,\ldots,1)s$, $h=(h',\s_2,\ldots,\s_d)$ and $\varphi_f=(\varphi_f,1,\ldots,1)$.
\end{pf}

Observe that if $\G(S,\HF)$ is saturated, then $\G(S_l,H_lF)$ is
saturated for all $l$.

\subsection{Examples} The class of directed groups of $\operatorname{Aut}(T_{\bar
d})$ contains many\allowbreak examples of groups that have been widely studied in
relation with torsion \cite{Ale,Gri1,Gri2,GS,BS}, intermediate growth
\cite{Gri1,Gri2,BS,Ers}, subgroup growth
\cite{Seg}, nonuniform
exponential growth \cite{Wil1,Wil2,Bri2} and
amenability \mbox{\cite{GZ,BV,Bri2,BKN,AAV,AV}}.

Theorem 3.6 in \cite{Bri2} states that the full directed group
$G(S_{d_0},H_{\bar d})$ is amenable if and only if the sequence $\bbar
d$ is bounded. This result obviously extends to the setting of
automorphisms of extended trees. Indeed, the group $\G(S,H,F)$ is a
subgroup of $F \wr_{\partial T_{\bar d}} G(S,H)$ which is a group
extension of a direct sum of finite\vadjust{\goodbreak} groups (copies of $F$) by the group
$G(S,H)$, hence is amenable when valency $\bbar d$ is bounded. On the
other hand, $G(S,H)$ is a quotient of $\G(S,H,F)$, so the latter
inherits nonamenability when $\bbar d$ is unbounded.

The notion of automorphisms of extended trees is a reformulation of
permutational wreath product, which was introduced in \cite{BE} in
order to compute explicit intermediate growth functions; see also \cite
{Bri3}. The boundary extension $T_d$ does not need to be a finite tree;
that is, the permutationnal wreath product $F \wr_{\partial T_{\bar
d}}$ makes sense for any group $F$. This was used in \cite{BE} to make
a stack of extensions of trees and compute the growth function of some
finitely generated groups of their automorphisms, namely $b_k(r)
\approx e^{r^{\a_k}}$, where $\a_k \rightarrow1$ with the number $k$
of extensions in the stack.

\subsection{Nonuniform growth and Haagerup property} This paragraph
illustrates the interest of extended trees by an application that will
not be used further in the rest of the article. It can be omitted at
first reading.

A group is said to have the Haagerup property if it admits a proper
continuous affine action on a Hilbert space; see \cite{CCJJV}. This
is, for instance, the case for free groups and amenable groups. Groups
with the Haagerup property have attracted interest as they are known to
satisfy the Baum--Connes conjecture.

Denote by $\Ac_d<S_d$ the group of alternate permutations, and given a
bounded sequence $\bbar d$ of integers $d \leq d_i \leq D$, define the
finite subgroup $A_{\bar d}<H_{\bar d}<\operatorname{Aut}(T_{\bar d})$ by the following:
\begin{longlist}[(2)]
\item[(1)] abstractly, $A_{\bar d} \simeq\Ac_d \times\cdots\times
\Ac_D$ with projection on factor $d'$ denoted $\mathrm{pr}_{d'}$;

\item[(2)] an element $b \in A_{\bar d}$ is realized in $\operatorname{Aut}(T_{\bar
d})$ according to the recursive rule $b=(b',\mathrm{pr}_{d_1}(b),1,\ldots,1)$
for $b' \in A_{\s\bar d}<\operatorname{Aut}(T_{\s\bar d})$.
\end{longlist}
Take $F$ to be the free product $\Ac_5 \ast\Ac_5$, and consider the
group $\G(\Ac_{d_0},A_{\bar d},\Ac_5 \ast\Ac_5)<\operatorname{Aut}(ET_{\bar d})$
which is a directed group of automorphisms of an extended tree (with
infinite extension at the boundary so that $\Ac_5 \ast\Ac_5<S_\infty$).
%
\begin{proposition}\label{prop2.5}
If $29 \leq d_i \leq D$, the group $\G(\Ac_{d_0},A_{\bar d},\Ac_5
\ast\Ac_5)<\break\operatorname{Aut}(ET_{\bar d})$ has nonuniform exponential growth, is
nonamenable and has\break Haagerup property.
\end{proposition}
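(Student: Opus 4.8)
\emph{Non-amenability and exponential growth.} The plan is to verify the three assertions separately. First, since the functions $\f_f$ (for $f\in\Ac_5\ast\Ac_5<S_\infty$) all have trivial tree part and are supported on the single boundary point $1^\infty$, they multiply among themselves as in $\Ac_5\ast\Ac_5$; so the subgroup of $\G:=\G(\Ac_{d_0},A_{\bar d},\Ac_5\ast\Ac_5)$ they generate is a copy of $\Ac_5\ast\Ac_5$. As a free product of two finite groups each of order at least $3$ contains a nonabelian free subgroup, $\G$ is non-amenable; since a finitely generated group of subexponential growth is amenable, $\G$ has exponential growth.

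\emph{Haagerup property.} Forgetting the boundary functions gives a surjection $\G\to G(\Ac_{d_0},A_{\bar d})$ whose kernel $N$ is the set of elements of $\G$ with trivial tree part. Moving the letters $\f_{f_i}$ to the front of a word in the generators, using that $t\f_{f_i}t^{-1}$ is the function supported at $t^{-1}(1^\infty)$ with value $f_i$, one identifies $N$ with the restricted direct sum $\bigoplus_{x\in Y}F_x$ of copies of $F=\Ac_5\ast\Ac_5$ indexed by the countable orbit $Y=G(\Ac_{d_0},A_{\bar d})\cdot 1^\infty$. Now $F$ has the Haagerup property, being a free product of finite (hence amenable) groups, and therefore so does $N$, the Haagerup property being stable under directed unions and finite direct products, hence under restricted direct sums. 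Moreover $G(\Ac_{d_0},A_{\bar d})\le G(S_{d_0},H_{\bar d})$, which is amenable because $\bar d$ is bounded (Theorem 3.6 of \cite{Bri2}); so $G(\Ac_{d_0},A_{\bar d})$ is amenable. Since an extension of a group with the Haagerup property by an amenable group again has the Haagerup property, $\G$ has the Haagerup property. (Equivalently, $\G\hookrightarrow F\wr_Y G(\Ac_{d_0},A_{\bar d})$, and one may invoke the theorem of Cornulier--Stalder--Valette on permutational wreath products.)

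\emph{Non-uniform exponential growth.} It remains to show that the uniform exponential growth rate of $\G$ equals $1$, i.e.\ that for each $\e>0$ there is a finite generating set $S$ with $\o(\G,S)=\lim_n|B_S(n)|^{1/n}\le 1+\e$; the plan is to adapt the construction of Wilson \cite{Wil1,Wil2} and Brieussel \cite{Bri2}. Iterating Proposition \ref{wpd}, $\G$ embeds into $\G(\Ac_{d_k},A_{\s^k\bar d},F)\wr Q_k$ with $Q_k$ a \emph{finite} group (an iterated wreath product of the rooted groups of the first $k$ levels), and the natural generators decompose very unevenly under this embedding: rooted generators affect only $Q_k$; each directed generator contributes a bounded-order element to $O(k)$ coordinates clustered along the spine $1^k$, plus a contribution to $Q_k$; and each $\f_{f_i}$ contributes only the letter $\f_{f_i}$ in the single coordinate $1^k$. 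For each $k$ one then builds a finite generating set $S_k$ of $\G$, obtained from the natural one by pushing the heavy generators $\f_{f_i}$ far down the tree, such that in the level-$k$ decomposition a word of length $n$ in $S_k$ produces a tuple with only a bounded number of nontrivial ``heavy'' coordinates, of total complexity $O(n)$, located (after accounting for the permutations in $Q_k$) at positions in a set of polynomial size in $n$. A counting argument then yields $|B_{S_k}(n)|\le C_k(1+\e_k)^n$ with $\e_k\to 0$ as $k\to\infty$, which gives the claim. The hypothesis $29\le d_i$ is exactly the numerical condition, imported from \cite{Bri2}, under which this argument goes through: it guarantees that the alternating groups $\Ac_{d_i}$ are large enough for the sets $S_k$ to be formed while still generating $\G$.

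The main obstacle is this last step: constructing the generating sets $S_k$ and carrying out the counting estimate, where one must control at once the number of admissible ``shapes'' of the level-$k$ decomposition and, inside each heavy coordinate, the self-similar contribution of the recursively reappearing copies of $\G$ over shifted valency sequences, so as to extract an exponential growth rate tending to $1$.
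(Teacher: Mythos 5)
Your treatments of non-amenability and of the Haagerup property are correct and essentially coincide with the paper's: the paper also gets non-amenability from the free product sitting inside $\G$, and gets Haagerup by viewing $\G$ inside $(\Ac_5\ast\Ac_5)\wr_{\partial T_{\bar d}}G(\Ac_{d_0},A_{\bar d})$ and using the short exact sequence with Haagerup kernel and amenable quotient (example 6.1.6 in \cite{CCJJV}); your version with the kernel $N\simeq\bigoplus_{x\in Y}F_x$ is the same argument phrased through the quotient map rather than the ambient wreath product.

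The genuine gap is the non-uniform exponential growth. You describe a plan to redo Wilson's counting argument from scratch --- choosing generating sets $S_k$ that push the heavy generators down the tree and estimating $|B_{S_k}(n)|$ --- and you explicitly flag that the crucial counting step is not carried out. That step is precisely the content of the claim, so as written nothing is proved. The paper avoids this entirely by verifying that $\G$ belongs to Wilson's class $\chi$ from \cite{Wil2}, for which the growth statement is already a theorem. Concretely, what needs to be checked (and what you omit) is: (i) each of $\Ac_{d_0}$, $A_{\bar d}$ and $\Ac_5\ast\Ac_5$ is perfect and generated by a bounded number of involutions, hence so is $\G$; and (ii) the embedding $\G\hookrightarrow\G(\Ac_{d_1},A_{\s\bar d},\Ac_5\ast\Ac_5)\wr\Ac_{d_0}$ of Proposition \ref{wpd} is actually \emph{onto}, i.e.\ one has an isomorphism $\G\simeq\G_1\wr\Ac_{d_0}$. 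Surjectivity is where the real work lies: by Proposition 7.2 of \cite{Bri2}, $\G$ contains every element $([b_1,b_2],1,\dots,1)$ for $b_1,b_2$ in one of the generating groups, and perfection then gives the whole first coordinate. Without this self-replication isomorphism, Wilson's theorem does not apply and your sketched counting would have to be done by hand; with it, the conclusion is immediate. The hypothesis $d_i\geq 29$ enters through these structural conditions on the alternating groups imported from \cite{Wil2}, not through a numerical estimate in a ball-counting argument.
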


Directed groups of the form $G(\Ac_{\bar d},A_{\bar d})<\operatorname{Aut}(T_{\bar
d})$ were introduced by Wilson as the first examples of groups with
nonuniform exponential growth \cite{Wil2,Wil1}. For bounded
valency and $A_{\bar d}$ finite as above, these groups are amenable
\cite{Bri2} and hence have Haagerup property. For unbounded valency
and $A_{\bar d}\simeq\Ac_5 \ast\Ac_5$, these groups are
nonamenable. Zuk asked whether they still have Haagerup property. The
groups $\G(\Ac_{d_0},A_{\bar d},\Ac_5 \ast\Ac_5)$ resemble the
groups $G(\Ac_{\bar d},A_{\bar d})$ for unbounded sequences $\bbar d$
in the sense that the ``free product factor'' $\Ac_5 \ast\Ac_5$ is
``located at the boundary of the tree.''\vadjust{\goodbreak} Thus the proposition above
hints that Wilson groups $G(\Ac_{\bar d},A_{\bar d})$ of \cite{Wil2}
have Haagerup property (but it does not prove it). It provides the
first example of groups of nonuniform growth for which Haagerup
property does not follow from amenability. No example of group having
nonuniform growth but not Haagerup property is known.
\begin{pf*}{Proof of Proposition \ref{prop2.5}}
Each of the groups $\Ac_{d_0}$, $A_{\bar d}$ and $\Ac_5 \ast\Ac_5$
is perfect and generated by finitely many involutions (the number of
which depends only on $D$), so that this is also the case for $\G(\Ac_{d_0},A_{\bar d},\Ac_5 \ast\Ac_5)$. Moreover, there is an isomorphism
\[
\G(\Ac_{d_0},A_{\bar d},\Ac_5 \ast
\Ac_5) \simeq\G(\Ac_{d_1},A_{\s\bar d},
\Ac_5 \ast\Ac_5) \wr\Ac_{d_0}.
\]
(By Proposition 7.2 in \cite{Bri2}, $\G$ contains any commutator
$([b_1,b_2],1,\ldots,1)$ for $b_1,b_2$ a pair of elements in one of the
generating groups $\Ac_{d_0},A_{\bar d},\Ac_5 \ast\Ac_5$. By
perfection, it shows that the embedding of Proposition \ref{wpd} is onto.)

This shows that $\G(\Ac_{d_0},A_{\bar d},\Ac_5 \ast\Ac_5)$ belongs
to a class $\chi$ (see \cite{Wil2}) and there exists generating sets
with growth exponent tending to $1$. Also, the group contains a
nontrivial free product, hence is nonamenable, and has nonuniform
exponential growth.

The group $\G(\Ac_{d_0},A_{\bar d},\Ac_5 \ast\Ac_5)$ inherits
Haagerup property for it is contained in $(\Ac_5 \ast\Ac_5) \wr_{\partial T_{\bar d}} G(\Ac_{d_0},A_{\bar d})$ which itself has
Haagerup property. Indeed, there is a short exact sequence
\[
1 \rightarrow(\Ac_5 \ast\Ac_5)^{\partial T_{\bar d}}
\rightarrow (\Ac_5 \ast\Ac_5) \wr_{\partial T_{\bar d}} G(
\Ac_{d_0},A_{\bar
d}) \rightarrow G(\Ac_{d_0},A_{\bar d})
\rightarrow1,
\]
where $(\Ac_5 \ast\Ac_5)^{\partial T_{\bar d}}$ has the Haagerup
property and $G(\Ac_{d_0},A_{\bar d})$ is amenable, which implies the
Haagerup property for the middle term (Example 6.1.6 in~\cite{CCJJV}).
\end{pf*}

In the remaining sections of the present article (as well as in \cite
{Bri3}), only finite groups $F$ are considered, which simplifies some
arguments and still allows us to observe various phenomena.

\section{Rewriting process and activity of words}\label{srp}

\subsection{Rewriting process} Given a finitely generated saturated
(extended) directed group $\G(S_{d_0},\HF)$ acting on a tree $T_{\bar
d}$ of bounded valency, a canonical generating set is $S_{d_0} \sqcup
\FH$. Note that by saturation and bounded valency, both $S_{d_0}$ and
$\HF$ are finite subgroups of $\G(S_{d_0},\HF)$. In the present article,
the disjoint union of these subgroups is taken as a generating set. In
particular both $1_{S_{d_0}}$ and $1_{\mathit{HF}}$ are generators, distinct in
a ``word perspective.''
%
\begin{definition}
Given the generating set $S_{d_0} \sqcup \FH$ of a finitely generated
saturated group $\G(S_{d_0},\HF)$, a representative word is \textit{alternate} if it has the form $w=s_1k_1s_2k_2\cddots s_nk_ns_{n+1}$ for
some $s_i$ in $S_{d_0}$ and $k_i$ in $\HF$.\vadjust{\goodbreak}
\end{definition}

Note that any word $w$ in this generating set admits a \textit{canonical
alternate} form obtained by merging packs of successive terms belonging
to the same finite group $S_{d_0}$ or $\HF$. For example, the canonical
alternate form of $s_1s_2k_3k_3^{-1}1_{S_{d_0}}k_41_{\mathit{HF}}$ is
$(s_1s_2)1_{\mathit{FH}}1_{S_{d_0}}k_4$.

The alternate form $w=s_1k_1\cddots s_nk_ns_{n+1}$ of a word in the
generating set $S_d \sqcup \FH$ is equivalent to $w=k_1^{\s_1}\cddots
k_n^{\s_n}\s_{n+1}$, for $\s_i=s_1\cddots s_i$, where $k^\s=\s k\s^{-1}$ denotes the conjugate of $k$ by $\s$. Remember that $k_j$ in
$\HF$ is uniquely decomposed into $k_j=(k'_j,b_{j,2},\ldots,b_{j,d_0})$
with $k'_j$ in $H_1F$ and $b_{j,t}$ in $S_{d_1}$ rooted.
%
\begin{proposition}[(Rewriting process)]\label{rp}
With the notation above, the alternate word $w=s_1k_1\cddots
s_nk_ns_{n+1}$ can be algorithmically rewritten in the wreath product
as
\[
w=\bigl(w^1,\ldots,w^{d_0}\bigr)\s_\varnothing,
\]
where:
\begin{longlist}[(4)]
\item[(1)] the rooted permutation is $\s_\varnothing=s_1s_2\cddots
s_{n+1}=\s_{n+1}$;

\item[(2)] the word $w^t=s_1^tk_1^t\cddots
s_{m_t}^tk_{m_t}^ts_{m_t+1}^t$ is alternate in the generating system
$S_{d_1} \sqcup H_1F$ of the saturated directed group $\G
(S_{d_1},H_1F)$, of length $m_t \leq\frac{n+1}{2}$ such that
$m_1+\cdots+m_{d_0} \leq n$;

\item[(3)] the factors $s_i^t$ depend only on factors $b_{j,t'}$ at
times $j$ when $\s_j(t)=t'$;
\item[(4)] the factors $k_i^t$ depend only on factors $k_j'$ at times
$j$ when $\s_j(t)=1$.
\end{longlist}
\end{proposition}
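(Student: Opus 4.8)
The plan is to unwind the wreath-product multiplication formulas established in the previous section and track how each letter of the alternate word $w = s_1 k_1 \dots s_n k_n s_{n+1}$ contributes to the sections $w^t$.

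First I would pass to the equivalent conjugated form $w = k_1^{\sigma_1} \cdots k_n^{\sigma_n} \sigma_{n+1}$ with $\sigma_i = s_1 \cdots s_i$, which is already noted in the text; here the conjugation is performed inside $\G(S,H,F) < Aut(ET_{\bar d})$, so $k_j^{\sigma_j} = \sigma_j^{-1} k_j \sigma_j$ has section data obtained by relabelling the first-level vertices through $\sigma_j$. Since $\sigma_{n+1}$ is rooted, this immediately gives statement (1): the rooted permutation of $w$ is $\sigma_\emptyset = \sigma_{n+1} = s_1 s_2 \cdots s_{n+1}$. Next I would decompose each generator via proposition \ref{wpd}: for $k_j = (k'_j, b_{j,2}, \dots, b_{j,d_0}) \in H F$ the conjugate $k_j^{\sigma_j}$ sits in the wreath product $\G(S_1, H_1, F) \wr S$ with its ``active'' coordinate (the one carrying $k'_j$, which is the only genuinely non-rooted section) located at vertex $\sigma_j(1)$, while the remaining coordinates carry the rooted permutations $b_{j,t}$, the coordinate $t$ receiving $b_{j, \sigma_j^{-1}(t)}$. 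Multiplying these $n$ elements of the wreath product using the formula $gf = (g_1 f_{\sigma(1)}, \dots)\sigma\tau$ derived in subsection \ref{directedgroups}, and collecting, at each vertex $t$, the letters that land there in temporal order, produces the section word $w^t$; by construction it is an alternate word over $S_{d_1} \sqcup H_1 F$, and the bookkeeping of which letter goes to which coordinate is exactly statements (3) and (4): a rooted factor $b_{j,t'}$ reaches vertex $t$ precisely when the accumulated first-level permutation $\sigma_j$ sends $t'$ to $t$, and a genuine section $k'_j$ reaches $t$ precisely when $\sigma_j(1) = t$.

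Then I would verify the length bound (2). The key observation is that across the $n$ factors $k_j$, each contributes its single section letter $k'_j$ to exactly one vertex (the one indexed by $\sigma_j(1)$), so $\sum_t m_t \leq n$ once we argue that the accompanying rooted letters merge into the alternating structure without inflating the count beyond $n$ — more carefully, each $k_j$ contributes at most one alternating ``block'' $s_i^t k_i^t$ to at most one $w^t$, giving $m_1 + \cdots + m_{d_0} \leq n$ after absorbing the interspersed rooted permutations coming from the $b_{j,t}$'s and the $s_i$'s into the $S_{d_1}$-letters $s_i^t$ by the canonical-alternate-form merging. The bound $m_t \leq \frac{n+1}{2}$ is then the pigeonhole-type refinement: the letters $k'_j$ arriving at a fixed vertex $t$ come at times $j$ with $\sigma_j(1) = t$, and between two consecutive such times the ``spacer'' positions are occupied by at least one rooted letter, so the alternating word at vertex $t$ has at most $\lfloor (n+1)/2 \rfloor$ non-rooted letters.

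The main obstacle I expect is the careful bookkeeping in part (2): one must be precise about how interspersed rooted permutations (both the original $s_i$ and the $b_{j,t}$-images at inactive coordinates) get absorbed into neighbouring $S_{d_1}$-letters when forming the canonical alternate form of $w^t$, and one must make sure that this absorption never creates a new non-rooted letter — so that the count of non-rooted letters in $w^t$ is controlled solely by the number of times $j$ with $\sigma_j(1) = t$. Establishing $m_t \le (n+1)/2$ rather than just $m_t \le n$ requires noticing that a genuine section letter $k'_j$ is always immediately preceded (in the section word) by the rooted contribution of $s_j$ after conjugation, so no two section letters at the same vertex are ever adjacent. Everything else is a direct, if notation-heavy, induction/unwinding of the two multiplication formulas already in hand.
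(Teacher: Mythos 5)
Your overall strategy is exactly the paper's: pass to the conjugated form $w=k_1^{\s_1}\cdots k_n^{\s_n}\s_{n+1}$, read off the wreath-product image of each $k_j^{\s_j}$, and collect at each coordinate $t$ the letters arriving there in temporal order. Points (1), (3), (4) and the bound $m_1+\dots+m_{d_0}\leq n$ are handled correctly: each $k_j$ sends its unique $H_1F$-letter $k_j'$ to exactly one coordinate, and passing to canonical alternate form can only decrease the count.

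The one genuine flaw is your justification of $m_t\leq\frac{n+1}{2}$. You claim that a section letter $k_j'$ is ``always immediately preceded in the section word by the rooted contribution of $s_j$'', so that no two section letters at the same vertex are ever adjacent. This is wrong on both counts. After conjugation the $s_j$ have been absorbed into the $\s_j$ and contribute no letter at all to the sections (equivalently, $s_j=(1,\dots,1)s_j$ has trivial level-one sections); the only possible separators in the raw product at coordinate $t$ are the rooted letters $b_{j,\cdot}$ coming from those factors $k_j^{\s_j}$ that are inactive at $t$. Moreover two section letters at the same vertex can perfectly well be adjacent: take $s_{j+1}=1_{S_{d_0}}$ (a legitimate generator in this setup), so that $\s_{j+1}=\s_j$ and the same coordinate is active at times $j$ and $j+1$; then $k_j'$ and $k_{j+1}'$ sit side by side and merge into a single $H_1F$-letter of $w^t$. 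The correct argument, which is the one the paper gives, is that $m_t$ counts the maximal runs of consecutive active times at $t$, and two distinct runs must be separated by at least one inactive time contributing an $S_{d_1}$-letter; hence $n\geq m_t+(m_t-1)$, i.e. $m_t\leq\frac{n+1}{2}$. The numerical bound you state is therefore right, but the mechanism you invoke to establish it would not survive scrutiny and should be replaced by the run-counting argument.
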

\begin{pf}
The factor $k_j^{\s_j}=\s_jk_j \s_j^{-1}$ has an image in the wreath product
\[
k_j^{\s_j}=\bigl(b_{j,\s_j(1)},\ldots,k_j',
\ldots,b_{j,\s_j(d_0)}\bigr)
\]
with $k_j'$ in position $\s_j^{-1}(1)$. This shows that $w^t$ in the
wreath product image of $w$ is a product of terms $b_{j,\s_j(t)}$ at
times $j$ when $\s_j(t) \neq1$ and terms $k_j'$ at times $j$ when $\s_j(t)=1$. The word $w^t$ is the canonical alternate form of this
product. Each factor $k_j$ furnishes a factor $k_j'$ to exactly one of
the coordinates, so the sum of length is $\leq n$. Also by looking at
alternate forms, there has to be a factor $k_j^{\s_j}$ such that $\s_j(t)\neq1$, giving $b_{j,t'}$ on coordinate $t$, between two factors
such that $\s_j(t)=1$, giving $k_j'$ on coordinate $t$, so that the
length of $w^t$ is at most half the length of $w$.
\end{pf}

Of course, the rewriting process can be iterated, and at each vertex,
$v=ut$ is associated to the alternate word $w$, another alternate word
$w^v$ of length $m_v$ in the generating set $S_{d_{|v|}} \sqcup
H_{|v|}F$ of the saturated directed group $\G(S_{d_{|v|}}, H_{|v|})$
defined inductively by $w^v=w^{ut}=(w^u)^t$. Note that the word $w^v$
has length $m_v \leq\frac{n}{2^{|v|}}+1$, and $\sum_{|v|=l} m_v \leq n$.

\subsection{Minimal tree and activity}\label{mta}
%
\begin{definition}
The \textit{minimal tree} $T(w)$ of the alternate word $w$ in $S_{d_0}
\sqcup \HF$ is the minimal regular rooted subtree of $T_{\bar d}$ such
that $m_v \leq1$ for any leaf $v$ of $T(w)$. Recall that a subtree $T$
of $T_{\bar d}$ is regular if whenever a vertex $v$ belongs to $T$,
either all its descendants $vt$ also belong to $T$, or none of them does.
\end{definition}

The minimal tree $T(w)$ is constructed algorithmically. Indeed it
contains the root $\varnothing$, and if $v$ is in $T(w)$, either $m_v
\leq1$ and no descendant of $v$ belongs to $T(w)$, or $m_v \geq2$ and
all the sons of $v$ belong to $T(w)$. As $m_v$ decays exponentially
fast with generations, the minimal tree $T(w)$ has depth at most $\log_2 n$.

The leaves of the minimal tree $T(w)$ satisfy either $m_v=0$, in which
case they are called \textit{inactive}, or $m_v=1$, called \textit{active},
leaves. The subset $A(w)$ of the boundary (set of leaves) $\partial
T(w)$ of the minimal tree $T(w)$ is called the \textit{active} set of the
word $w$. Its size is the \textit{activity} $a(w)=\# A(w)$ of the word
$w$. The activity of a word $w=s_1$ of length $0$ (no factor in $\HF$)
is $a(s)=0$.

\begin{remark}
The minimal tree $T(w)$, as well as the set of active leaves $A(w)$, of
a word $w=s_1k_1\cddots k_ns_{n+1}$ depend only on the word $s_1h_1\cddots
h_ns_{n+1}$ where $k_j=\varphi_{f_j}h_j$, that is, on the quotient
$G(S,H)$ of $\G(S,\HF)$.
\end{remark}

The notions of minimal tree and active set allow an interesting
description of the action of a word $w$ on the tree $T_{\bar d}$.
Indeed, the action of the automorphism $\gamma=_{\G}w$ in
$\operatorname{Aut}(ET_{\bar d})$ is determined by the following data:
\begin{longlist}[(3)]
\item[(1)] the minimal tree $T(w)$ and its active set $A(w)$;
\item[(2)] the permutations $\s_u \in S_{d_{|u|}}$ attached to
vertices $u \in T(w)\setminus A(w)$;\vspace*{1pt}
\item[(3)] the ``short words'' $w^v=s_1^vk^vs_2^v$ attached to active
vertices $v$.
\end{longlist}
The description of the short word $w^v=s_1^vk^vs_2^v$ at an active
vertex $v$ can be refined into a tree action $s_1^vh_vs_2^v$, together
with a boundary function given by $\varphi_{f^v}(x)=f^v$ for
$x=v(s_1^v(1))1^\infty$ and $\varphi_{f_v}(x)=1$ otherwise.

A point $x$ in $\partial T_{\bar d}$ is called active when it is of the
form $x=v(s_1^v(1))1^\infty$ for some active leaf $v$ of $T(w)$. If
the element $\gamma=_\G w$ in $\G(S,\HF)$ has the form $\gamma
=\varphi g$ in the permutational wreath product (\ref{pwp}), then the
support of $\varphi$ is included in the set $A_\partial(w)$ of active
boundary points; see Figure~\ref{fig1}.
%
\begin{figure}

\includegraphics{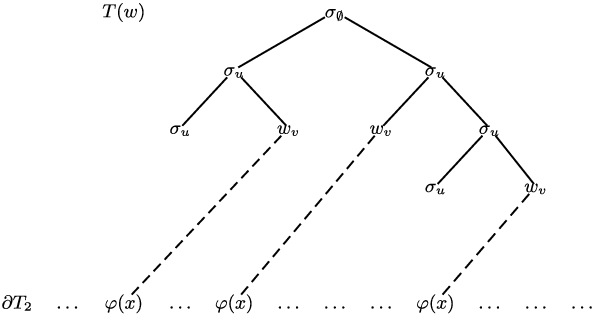}

\caption{Description of the action of a word $w$ via the minimal tree $T(w)$.}
\label{fig1}
\end{figure}

\subsection{Ascendance forest}\label{ascfor} Given an alternate word
$w$ in $\G(S_{d_0},\HF)$ and the collection $(w^v)_{v \in T(w)}$ of
words obtained by rewriting process, each word $w^v$ is a product of
factors $w^v=s_1^vk_1^v\cddots k_{m_v}^vs_{m_v+1}^v$. Recall that each
factor $k_i^{vt}$ in the word $w^{vt}$ is a product of terms $k'^v_j$,
obtained from $(k_j^v)^{\s_j^v}=(b^v_{j,\s^v_j(1)},\ldots, k'^v_j,
\ldots,\allowbreak b^v_{j,\s^v_j(d_{|v|})})$ with $k'^v_j$ in position $t$, during\vspace*{-1pt}
the rewriting process of the word $w^v$ for the vertex~$v$, into
$w^v=(w^{v1},\ldots,w^{vd_{|v|}})\s_v$.

Consider the graph $\AF(w)$ with set of vertices the collection
$(k_i^v)_{v,i}$ of factors in $H_{|v|}F$ appearing in the rewritten
words $(w^v)_{v \in T(w)}$, where a pair of factors
$(k_i^v,k_{j}^{v'})$ is linked by an edge when $v'=vt$ and the term
$k'^v_i$ appears in $k_{j}^{vt}$.

%
\begin{fact}
The graph $\AF(w)$ is a forest, that is, a graph with no loop. More
precisely, $\AF(w)$ is a union of trees $(\t_v)_{v \in A(w)}$ indexed
by the active set of $w$. Moreover, if $v$ is an active leaf, then
\[
k^v=\prod_{k_j \in\partial\t_v}k^{(|v|)}_j
\]
is the product ordered with $j$, where $k^{(|v|)}_j$ is the restriction
of $k_j \in \HF$ to $H_{|v|}F$. In particular,
\[
f_v=\prod_{k_j \in\partial\t_v}f_j.
\]
\end{fact}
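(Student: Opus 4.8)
The plan is to track the rewriting process recursively and observe that each factor $k_i^v$ has a well-defined set of ``parents'', namely the factors $k_j'^u$ (for $u$ the parent of $v$) which were merged into it during the rewriting of $w^u$. By construction of $AF(w)$, a factor $k_j^u$ at level $|u|$ contributes exactly one term $k_j'^u$, and this term lands in exactly one coordinate $t$ (the coordinate $\s_j^{u-1}(1)$, i.e. the position it occupies in the wreath decomposition of $(k_j^u)^{\s_j^u}$), hence into exactly one son vertex $ut$ and, within $w^{ut}$, into exactly one merged factor $k_i^{ut}$. So each vertex of $AF(w)$ other than those at the root level has exactly one outgoing edge to a ``child'' factor at the next level; equivalently each factor at level $l+1$ has a unique set of factors at level $l$ feeding into it, and distinct factors at level $l$ feed into at most one factor at level $l+1$ each. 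This is precisely the statement that $AF(w)$ is a disjoint union of trees: the edges go strictly from level $|v|$ to level $|v|+1$, and reading the graph from the leaves (active vertices) downward, each factor has a uniquely determined ancestor at every smaller level. To make this precise I would argue by induction on levels that the connected component of $AF(w)$ containing a given factor $k_j^u$ at level $|u|$ has a unique ``topmost'' (smallest level) vertex, which lies at level $0$ — but more usefully, that reading upward from the leaves it is a tree. The indexing by $A(w)$ comes from the fact that the words $w^v$ at active leaves $v$ have length $m_v \leq 1$, so each contains at most one factor $k^v$, and the set of active leaves is in bijection with the set of such factors; the tree $\t_v$ is then the connected component of $AF(w)$ whose unique leaf is $k^v$.

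For the product formula, I would unwind the recursion. Proposition \ref{rp}(4) says the factors $k_i^t$ of $w^t$ depend only on the $k_j'$ at times $j$ with $\s_j(1)=t$, and more precisely the proof shows $w^t$ is the canonical alternate form of the product (ordered by $j$) of the $k_j'$ (at times with $\s_j(t)=1$) interspersed with the rooted $b$-terms. When $v$ is an active leaf, $w^v$ has length $\leq 1$, so all the interspersed rooted terms collapse and $k^v$ is literally the ordered product of the relevant $k_j'^{u}$'s, where $u$ is the parent of $v$. Iterating this down from level $0$ to level $|v|$, and using that at each step ``take the $k'$-part, order by $j$'' is compatible with composition of these restriction operations (the restriction of $k_j \in HF$ to $H_{|v|}F$ is obtained by iteratively taking the first coordinate of the $H$-part), one obtains $k^v = \prod_{k_j \in \partial\t_v} k_j^{(|v|)}$ with $k_j^{(|v|)}$ the restriction of the original generator $k_j$ to $H_{|v|}F$, the product again ordered by $j$. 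The statement about $f_v$ follows by applying the quotient map $HF \to F$, $k_j = \f_{f_j}h_j \mapsto f_j$, since this quotient is compatible with the rewriting (the $h$-part and $f$-part evolve independently, as $h\f_f = \f_f h$ for $h \in H_{\bar d}$).

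The main obstacle is bookkeeping rather than conceptual: one must be careful that the ``ordered by $j$'' in the product formula is consistent across levels. A factor $k_i^v$ collects several terms $k_j'^u$ in increasing order of $j$, and then when $k_i^v$ is itself processed, its own term $k_i'^v$ is placed among the terms of $w^{vt}$ according to the time index of $k_i^v$ within $w^v$ — but that time index is inherited from the $j$-ordering one level up, so the global ordering by the original time $j$ is preserved throughout. Verifying this requires setting up notation carefully (the ``time'' of $k_i^v$ in $w^v$ should be defined so that it refines the original times of the generators feeding into it), but once the notation is in place the induction is routine. A secondary point to check is that no factor $k_j'^u$ gets ``lost'': every generator factor $k_j$ of the original word $w$ contributes exactly one $k_j'$ to exactly one son-coordinate at every stage until it reaches an active leaf (Proposition \ref{rp}(2) guarantees the total length is non-increasing and the process terminates on the minimal tree), so $\partial\t_v$ ranges over exactly the generators $k_j$ whose contribution ``flows down'' to the active leaf $v$, and the union of $\partial\t_v$ over $v \in A(w)$ is the full multiset of original factors — consistent with $\sum_{v \in A(w)} 1 = a(w)$ and the activity being controlled by inverted orbits. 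I expect this to follow cleanly once the single-level statement (Proposition \ref{rp}) is iterated.
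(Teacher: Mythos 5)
Your proposal is correct and follows essentially the same route as the paper: the key point in both is that each factor $k_i^u$ at a non-terminal vertex contributes its $k'$-part to exactly one factor one level deeper, which forces each connected component to be a tree rooted at a unique $k^v$ with $v$ an active leaf, and unwinding the rewriting recursion level by level gives the ordered product formula (the paper phrases the first part as growing balls of increasing radius around $k^v$, which is the same unique-successor observation read in the other direction). The only blemish is the phrase ``other than those at the root level,'' which should read ``other than those at active leaves,'' since the level-$0$ factors do feed into the next level while the terminal factors $k^v$ do not; this is a slip of terminology, not a gap.
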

\begin{pf}
Let $v=t_1\cddots t_l$ be an active leaf with $w^v=s_1^vk^vs_2^v$. The
term\vspace*{1pt} $k^v$ is a product of terms $k'^{t_1\cddots t_{l-1}}_j$ in
$w^{t_1\cddots t_{l-1}}$ [for those\vspace*{2pt} $j$'s where $\s_j^{t_1\cddots
t_{l-1}}(1)=t_l$], which are the neighbors of the vertex $k^v$ in the
graph $\AF(w)$. This describes the ball of center $k^v$ and radius $1$.

Inductively, each factor $k_i^{t_1\cddots t_{l-r}}$ which represents a
vertex in the sphere of center $k^v$ and radius $r$ is a product of
terms $k'^{t_1\cddots t_{l-r-1}}_j$ in $w^{t_1\cddots t_{l-r-1}}$ [for
those $j$'s where $\s_j^{t_1\cddots t_{l_r-1}}(1)=t_{l-r}$], which form
the link\vspace*{-1pt} of $k_i^{t_1\cddots t_{l-r}}$ in the sphere of radius $r+1$.

Now\vspace*{-1pt} each factor $k'^{t_1\cddots t_\l}_j$ in a word of level $\l$
contributes to exactly one factor $k_i^{t_1\cddots t_\l t_{\l+1}}$,
which rules out the possibility of a loop.

So the connected component of $k^v$ in the graph $\AF(w)$ is a tree $\t_v$, it is also the $l$-ball of center $k^v$ and the leaves of $\t_v$
form the $l$-sphere, which is precisely the set of factors $k_j$ of $w$
that lie in $\t_v$. By construction of $\AF(w)$, $k^v$ is the required
ordered product.
\end{pf}
%
\begin{remark}\label{remarkphi}
Observe that the ascendance forest $\AF(w)$ of a word $w=s_1k_1\cddots
k_ns_{n+1}$ depends only on the word $s_1h_1\cddots h_n s_{n+1}$ for
$k_j=\varphi_{f_j}h_j$. Indeed given a factor $k_i^{t_1\cddots t_\l}$,
its link to factors in level $\l+1$ is determined by the factor $\s_i^{t_1 \cddots t_\l}$, which is determined by the factors $s_1^{t_1
\cddots t_\l},\ldots, s_i^{t_1 \cddots t_\l}$ themselves determined by the
sequence $(h_j^{t_1\cddots t_{\l-1}})_j$. Thus the link of $k_i^v$ does
not depend on the sequence $(f_j)_{j \in\{1,\ldots,n\}}$. A
consequence of this observation and the preceding fact is that for any
function $\varphi$ with support included in $A_\partial(w)$, there
exists $(f_j)_{j \in\{1,\ldots,n\}}$ such that $\varphi s_1h_1\cddots
h_n s_{n+1}=_\G s_1 \varphi_{f_1}h_1 \cddots\varphi_{f_n}h_n s_{n+1}$.
Moreover the number of such $n$-tuples is independent of the function
$\varphi$. This shows:
\end{remark}
%
\begin{fact}\label{cond}
Given $g=s_1h_1\cddots h_n s_{n+1}$, for any $\varphi\dvtx\partial T_{\bar
d} \rightarrow F$ with support in $A_\partial(g)$, one has
\[
\#\{f_1,\ldots,f_n \in F| \varphi g =_\G
s_1\varphi_{f_1}h_1\cddots \varphi_{f_n}h_ns_{n+1}
\}= \biggl(\frac{1}{\#F} \biggr)^{\#
A_\partial(g)}.
\]
\end{fact}

\subsection{Counting activity} The inverted orbit of a product
$w=r_1\cddots r_k$ of elements $r_i$ of $\operatorname{Aut}(T_{\bar d})$ is the set $\{
1^\infty,r_k 1^\infty,r_{k-1}r_k1^\infty,\ldots, r_1\cddots
r_k1^\infty\}$, denoted by $\Oc(w)$; see \cite{BE}. The inverted
orbit of $w^{-1}$ coincides with the activity set $A_\partial(w)$
defined in Section \ref{mta}. The notion of activity is classical in
the context of automorphisms of rooted trees; see \cite{Nek,Sid}.

%
\begin{proposition}\label{counta}
Let $\G(S_{d_0},\HF)$ be a finitely generated saturated directed group
acting on a tree of bounded valency $\bbar d$. The activity function
$a(w)$, which for $w=s_1 \varphi_{f_1}h_1 \cddots\varphi_{f_n}h_n
s_{n+1}$ counts equivalently:
\begin{longlist}[(4)]
\item[(1)] the size of the set $A(w)$ of active leaves in the minimal
tree $T(w) \subset T_{\bar d}$;
\item[(2)] the size of the set $A_\partial(w)$ of active boundary
points in $\partial T_{\bar d}$;
\item[(3)] the number of trees (i.e., connected components) in the
ascendance forest $\AF(w)$;
\item[(4)] the size of the inverted orbit $\Oc
(s_{n+1}^{-1}h_n^{-1}\cddots h_1^{-1}s_1^{-1})$ in the sense of \cite{BE},
satisfies under rewriting process $w=(w^1,\ldots,w^{d_0})\s_\varnothing$:
\[
a(w)=a\bigl(w^1\bigr)+\cdots+a\bigl(w^{d_0}\bigr).
\]
Moreover, the constraint $a(w) \leq r$ only allows us to describe at
most exponentially many elements in $\G(S_{d_0},\HF)$; that is, there
exists a constant $C$ depending only on $D=\max\{d_i\}$ and $\#\HF$
such that
\[
\#\bigl\{\gamma\in\G(S_{d_0},\HF)|\exists w=_\G\gamma,
a(w) \leq r\bigr\} \leq C^r.
\]
\end{longlist}
\end{proposition}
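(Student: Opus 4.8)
The plan is to treat the statement in three stages: the equivalence of the four descriptions of $a(w)$, the additivity under rewriting, and the exponential count, which is the substantial point. For the equivalences, the map sending an active leaf $v$ of $T(w)$ to the boundary point $v(s_1^v(1))1^\infty$ (where $s_1^v$ is the leading $S$-factor of the short word $w^v$) identifies item (1) with item (2), and it is injective because two distinct leaves of a regular subtree are incomparable vertices, so the two boundary points already diverge below $\min(|v|,|v'|)$. Item (3) equals item (1) by the Fact of Section~\ref{ascfor}, which presents the connected components of $AF(w)$ as a family indexed by $A(w)$. Item (4) is the translation into the inverted-orbit formalism of \cite{BE}; I would match it with (1)--(2) through the additivity rule below, using that the inverted orbit satisfies the same rule under the wreath decomposition and agrees with $A_\partial$ on words with a single $HF$-factor. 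The additivity $a(w)=a(w^1)+\dots+a(w^{d_0})$ I would read off the minimal tree itself: iterating the rewriting gives $w^{tu}=(w^t)^u$, so $m_{tu}$ is the length of the $u$-th iterate of $w^t$, hence the part of $T(w)$ below a first-level vertex $t$ is exactly $T(w^t)$, with the same active leaves; summing over $t$ gives the formula when $m_\emptyset\geq2$, and the cases $m_\emptyset\leq1$ are immediate.

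The heart of the matter is the following structural lemma, which I expect to be the main obstacle: \emph{if $a(w)\leq r$ then $T(w)$ has at most $C_D r$ vertices}, where $C_D$ depends only on $D=\max_i d_i$. Call an internal vertex $u$ of $T(w)$ (so $m_u\geq2$, whence $a(w^u)\geq1$) \emph{branching} if at least two of its sons carry positive activity. The key point is that a non-branching internal vertex terminates at once: if $uc$ is the unique son of $u$ with positive activity, then $m_{ut}=0$ for every $t\neq c$, i.e. no factor $k'^u_j$ arising in the rewriting of $w^u$ lands in any coordinate other than $c$; since $k'^u_j$ lands in coordinate $(\s^u_j)^{-1}(1)$, this forces $\s^u_j(c)=1$ for all $j$, so the coordinate-$c$ part of $w^u$ is the product $k'^u_1\cdots k'^u_{m_u}$ with no rooted factor interleaved, which collapses to a single element of $H_{|u|+1}F$. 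Thus $m_{uc}=1$, $uc$ is an active leaf, and all sons of $u$ are leaves. In particular every non-branching internal vertex other than the root is a son of a branching one. Since a branching vertex of $T(w)$ is a branching point of the minimal subtree of $T_{\bar d}$ spanned by the root and the $\leq r$ active leaves, there are at most $r-1$ branching vertices, hence at most $(r-1)D+1$ non-branching internal vertices, hence at most $(r-1)(D+1)+1$ internal vertices, hence — counting all their sons — at most $C_D r$ vertices altogether.

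To conclude I would invoke the description of Section~\ref{mta}: the automorphism $\g=_\G w$ is completely determined by the marked tree $(T(w),A(w))$, by the rooted permutations $\s_u\in S_{d_{|u|}}$ attached to $u\in T(w)\setminus A(w)$, and by the triples $(s_1^v,k^v,s_2^v)\in S_{d_{|v|}}\times H_{|v|}F\times S_{d_{|v|}}$ coding the short words $w^v$ at the active leaves $v$. By the lemma $|T(w)|\leq C_D r$, so the marked tree ranges over a set of size at most $C_1^{\,r}$, the family $(\s_u)_u$ over a set of size at most $(D!)^{C_D r}$, and — since $|A(w)|=a(w)\leq r$ and $\#(H_{|v|}F)\leq\#H\cdot\#F\leq(\#HF)^2$ — the family of triples over a set of size at most $\big((D!)^2(\#HF)^2\big)^r$. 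Multiplying these three bounds produces $C^r$ with $C$ depending only on $D$ and $\#HF$, which is the asserted inequality.

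In short, everything reduces to the structural lemma, and within it to the observation that when no activity escapes to the sibling coordinates the coordinate-$c$ subword is a single generator: this is what forbids the minimal tree of a low-activity word from being deep (of depth $\sim\log_2 n$) and wide, and hence keeps the reconstruction data bounded purely in terms of $r$.
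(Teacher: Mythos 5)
Your proposal is correct, and its overall architecture (equivalences read off the minimal-tree description, additivity from $T(w)=\{\emptyset\}\sqcup T(w^1)\sqcup\dots\sqcup T(w^{d_0})$, then an exponential count of the reconstruction data once $\#T(w)$ is bounded linearly in $a(w)$) coincides with the paper's. The genuine difference is in how the crucial bound $\#\partial T(w)=O(D\,a(w))$ is obtained. The paper asserts $\#\partial T(w)\leq D\,a(w)$ ``by induction on $\#T(w)$ since $\partial T(w)=\partial T(w^1)\sqcup\dots\sqcup\partial T(w^{d_0})$''; as literally stated that induction does not close, because coordinates $t$ with $a(w^t)=0$ contribute $\#\partial T(w^t)=1>D\cdot 0$, and without further input one cannot exclude a long chain of internal vertices each shedding $d_{|u|}-1$ inactive leaves while carrying a single unit of activity --- which would make the bound false. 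Your structural lemma is exactly the missing ingredient: if an internal vertex $u$ has only one son $uc$ of positive activity, then every $k'^u_j$ lands in coordinate $c$, forcing $\s^u_j(c)=1$ for all $j$, so the coordinate-$c$ word has no interleaved rooted factors, merges to a single $HF$-generator, and $uc$ is an active leaf; hence non-branching internal vertices terminate immediately, branching vertices number at most $a(w)-1$, and the tree has $O(D^2 a(w))$ vertices. (The resulting constant is slightly worse than the paper's $D$, but only the linearity matters for the $C^r$ bound.) Your treatment of equivalence (4) via additivity is vaguer than the paper's direct induction on $n$ --- the paper simply notes that appending $\f_{f_n}h_n$ adds the point $g_n^{-1}(1^\infty)$ to $A_\partial$ --- but it is a minor point and your sketch is repairable along those lines. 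In short: same strategy, but your key lemma supplies a justification that the paper's one-line induction actually requires.
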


Note that $a(w)$ is the activity for words in the group
$\G(S_{d_0},\HF)$, and $a(w^t)$ is the activity function for words in
the group $\G(S_{d_1},H_1F)$.
\begin{pf*}{Proof of Proposition \ref{counta}}
Points (1), (2), (3) are clear from the descriptions above. Point (4)
is shown by induction on $n$. Suppose\break $s_1k_1\cddots k_{n-1}s_n=\varphi_n g_n$ for
$g_n=s_1h_1\cddots h_{n-1}s_n$, then $s_1k_1\cddots
k_{n-1}s_n\varphi_{f_n}h_n=\varphi_n(g_n\doot\varphi_{f_n})g_nh_n$ and
the point $g_n^{-1}(1^\infty)$ which is the support of the function
$g_n\doot\varphi_{f_n}$ is added to the set of active leaves $A_\partial(w)$.

The equality on activities under rewriting process is trivial when $n
\leq1$, and if $n \geq2$ then the minimal tree is not restricted to
the root, one has $T(w)=T(w^1) \sqcup\cddots\sqcup T(w^{d_0}) \sqcup\{
\varnothing\}$, so that $A(w)=A(w^1) \sqcup\cddots\sqcup A(w^{d_0})$,
and equality holds.

In order to prove the exponential bound, first observe that $\#\partial
T(w) \leq D a(w)$. It is obvious if $\partial T(w)$ is the root $\{
\varnothing\}$ or the first level $\{1,\ldots,d_0\}$, and then clear for
arbitrary $T(w)$ by induction on the size $\#T(w)$ since $\partial
T(w)=\partial T(w^1) \sqcup\cddots\sqcup\partial T(w^{d_0})$.

Now if $a(w) \leq r$, its minimal tree $T(w)$ has a boundary of size
$\leq Dr$, so there are $\leq K^r$ possibilities for $T(w)$ (for some
$K$ depending only on $D$). To finish the description of $\gamma=_\G
w$, one has to choose permutations $\s_u$ at vertices $u \in T(w)
\setminus\partial T(w)$, for which there are $\leq D!^{Dr}$
possibilities, and short words $s_1^vk_vs_2^v$ at leaves $v \in
\partial T(w)$, for which there are $\leq(D!^2\#\HF)^{Dr}$ possibilities.
\end{pf*}

\section{Random walks}\label{sectionRW}

Given a finitely generated saturated directed group $\G(S_{d_0},\HF)$,
consider random alternate words $Y_n=s_1k_1s_2k_2\cddots s_nk_ns_{n+1}$,
where $s_i$ in $S_{d_0}$ and $k_i$ in $\HF$ are equidistributed, and all
factors are independent. Such a random alternate product $Y_n$ is the
simple random walk on $\G(S_{d_0},\HF)$ for the symmetric generating
set $S_{d_0}\HF S_{d_0}$ (for the product of two independent
equidistributed variables $s'_is_{i+1}$ in $S_{d_0}$ is another
equidistributed variable, independent of other factors).

\subsection{Inheritance of random process through wreath product}
Given a random alternate word $Y_n$, the rewriting process of
Proposition \ref{rp} furnishes an image in the wreath product
$Y_n=(Y_n^1,\ldots,Y_n^{d_0})\t_n$. Each coordinate $Y_n^t$ is a
random process on words in $S_{d_1} \sqcup H_1F$ by Proposition \ref
{wpd}, which turns out to be a random alternate word of random length.
%
\begin{lemma}\label{randomRP}
Denote $Y_n=(Y_n^1,\ldots,Y_n^{d_0})\t_n$ the alternate words obtained
by rewriting process of a random alternate word $Y_n=s_1k_1s_2k_2\cddots
s_nk_ns_{n+1}$ in a finitely generated saturated directed group $\G
(S_{d_0},\HF)$. Then:
\begin{longlist}[(3)]
\item[(1)] The rooted random permutation is $\t_n=s_1\cddots s_{n+1}$
and hence is equidistributed.
\item[(2)] The random length $m_t$ of the random product $Y_n^t$ has
the law of the sum of $n$ independent Bernoulli variables $(u_j)$ on $\{
0,1\}$ with $P(u_j=1)=p_0=\frac{d_0-1}{d_0^2}$.

In particular, by the law of large numbers $m_t \sim p_0n$ almost
surely, and by the principle of large deviations, for any $\theta>0$,
there exists $c_\theta<1$ such that
\[
P \biggl(\frac{m_t}{n} \notin[p_0-\theta,p_0+
\theta] \biggr) \leq c_\theta^n.
\]
\item[(3)] For each coordinate $t$, the conditioned variable
$Y_n^t|m_t$ has precisely the law of the random alternate word
$Y'_{m_t}$ of length $m_t$ in $S_{d_1}\sqcup H_1F$, that is,
\[
Y_n^t=Y'_{m_t}=s_1^tk_1^t
\cddots k_{m_t}^ts_{m_t+1}^t,
\]
where the factors $s_j^t$ and $k_j^t$ are equidistributed in $S_{d_1}$
and $H_1F$, respectively (except $s_1^t$ and $s_{m_t+1}^t$), and all
factors $(s_j^t,k_j^t)_j$ are independent.
\end{longlist}
\end{lemma}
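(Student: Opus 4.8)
The strategy is to follow the rewriting process of Proposition \ref{rp} step by step and track how the uniform independent inputs $(s_i,k_i)$ are distributed among the coordinates. Recall that the alternate word $Y_n = s_1k_1\dots s_nk_ns_{n+1}$ is equivalent to $k_1^{\s_1}\dots k_n^{\s_n}\s_{n+1}$ with $\s_i = s_1\dots s_i$, and each $k_j = (k_j',b_{j,2},\dots,b_{j,d_0})$ with $k_j'$ in $H_1F$ and $b_{j,t}$ rooted in $S_{d_1}$.

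\emph{Step 1 (the rooted permutation).} Point (1) is immediate: by Proposition \ref{rp}(1) the rooted permutation is $\s_{n+1} = s_1\dots s_{n+1}$, and a product of independent uniform variables on the finite group $S_{d_0}$ is uniform on $S_{d_0}$.

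\emph{Step 2 (the length).} For point (2), I would argue as in Proposition \ref{rp}: the factor $k_j^{\s_j}$ has wreath-product image with $k_j'$ in position $\s_j^{-1}(1)$ and $b_{j,\s_j(t)}$ in the other positions. Hence $k_j$ contributes a genuine $H_1F$-letter to coordinate $t$ precisely when $\s_j^{-1}(1) = t$, i.e. when $\s_j(t) = 1$. Introduce $u_j^t = \mathbf{1}_{\{\s_j(t)=1\}}$; then the number of $H_1F$-letters landing in coordinate $t$ is $\sum_{j=1}^n u_j^t$, and (up to merging consecutive rooted letters into the neighbouring $s$'s, which does not change the count of $H_1F$-letters) this is exactly $m_t$. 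The key distributional point is that $\s_j$ and $\s_{j}^{-1}(1) = \s_j^{-1}\dots\s_1^{-1}(1)$ performs a simple random walk on the uniform action of $S_{d_0}$ on $\{1,\dots,d_0\}$; at each step the value $\s_j^{-1}(1)$ is uniform on $\{1,\dots,d_0\}$, and \emph{consecutive} values are obtained by multiplying by an independent uniform $s_{j+1}^{-1}$, so the events $\{\s_j(t)=1\}$, $j=1,\dots,n$, are independent (conditioning on $\s_{j-1}$, the probability that $\s_j(t)=1$ is $\frac{1}{d_0}$ regardless). Wait — this gives $P(u_j^t=1)=\frac1{d_0}$, not $p_0$. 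The correct bookkeeping is that a letter $k_j$ contributes to the \emph{length} $m_t$ of the alternate form of $w^t$ only when it is sandwiched appropriately; more precisely $u_j$ should record the event that position $t$ receives a nontrivial $H_1F$-letter \emph{and} that this letter is not absorbed — equivalently, writing $p_0 = \frac{d_0-1}{d_0^2}$ corresponds to the probability that, in the canonical alternate form, a new $H_1F$-block is opened, which requires both $\s_{j}^{-1}(1)=t$ for the letter and a rooted separator on either side; I would reconcile this by directly computing $P(u_j=1)$ from the combinatorics of Proposition \ref{rp} and noting independence follows from the i.i.d.\ structure of the $s_i$. Once $P(u_j=1)=p_0$ and independence are established, the law of large numbers and Cramér's large deviation bound give $m_t \sim p_0 n$ a.s. and $P(m_t/n \notin [p_0-\theta,p_0+\theta]) \le c_\theta^n$.

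\emph{Step 3 (conditional law).} For point (3), condition on the value $m_t$ and on the pattern of which indices $j$ contribute to coordinate $t$. By Proposition \ref{rp}(3)--(4), the $s_i^t$-factors of $w^t$ are built only from the $b_{j,t'}$ at the contributing times, and the $k_i^t$-factors only from the $k_j'$ at the contributing times. Since the original $k_j = (k_j',b_{j,2},\dots,b_{j,d_0})$ are uniform on the product group $H_1F \times S_{d_1}^{d_0-1}$ and mutually independent across $j$, the coordinates $k_j'$ are i.i.d.\ uniform on $H_1F$, the $b_{j,t'}$ are i.i.d.\ uniform on $S_{d_1}$, and all are independent of one another and of the $s_i$'s (which govern which indices contribute). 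Merging consecutive rooted letters produces the canonical alternate form $s_1^tk_1^t\dots k_{m_t}^ts_{m_t+1}^t$; the internal $k_i^t$ are products of uniform independent $H_1F$-letters hence uniform on $H_1F$, and the internal $s_i^t$ are products (over the rooted letters occurring between two consecutive contributing $k_j'$'s together with the relevant $s$-separators) which, because at least one genuinely uniform $S_{d_1}$-factor $b_{j,t'}$ occurs in each such block, are uniform on $S_{d_1}$; the boundary factors $s_1^t$ and $s_{m_t+1}^t$ need not be uniform. Independence of the resulting $(s_i^t,k_i^t)_i$ follows since they are built from disjoint blocks of the independent input letters. This is exactly the law of $Y'_{m_t}$.

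\emph{Main obstacle.} The delicate point is \emph{Step 2}: pinning down that the per-step success probability is $p_0 = \frac{d_0-1}{d_0^2}$ rather than $\frac1{d_0}$, and that the indicators are genuinely independent across $j$ despite the $\s_j$ being a running product. Both facts hinge on a careful reading of when the canonical alternate form of $w^t$ opens a new $H_1F$-block (needing a nontrivial $H_1F$-contribution bracketed by rooted letters on coordinate $t$), versus when a contribution is absorbed into an existing block; the i.i.d.\ structure of the $s_i$ makes the bracketing events a function of independent increments, which yields both the value $p_0$ and independence, but this requires spelling out the three-letter window around each index $j$.
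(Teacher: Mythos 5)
Your points (1) and (3) follow the paper's proof essentially verbatim: the rooted permutation is the product of the independent uniform $s_i$, hence uniform; and for (3) the key observation, which you correctly isolate, is that the block pattern (hence $m_t$) is a function of $s_1,\dots,s_n$ alone while the letters filling the blocks come from the independent $k_j$'s, each contributing exactly one component ($k_j'$ or one $b_{j,t'}$) to coordinate $t$, so that conditionally on $m_t$ every alternate word of that length is equally likely. (One small overstatement: saturation only guarantees that each \emph{marginal} projection of the uniform measure on $HF$ is uniform, not that $k_j', b_{j,2},\dots$ are jointly independent; this is harmless since coordinate $t$ uses only one component of each $k_j$.)

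The genuine gap is in your Step 2, and you flag it yourself without resolving it. The quantity $\sum_j \mathbf{1}_{\{\sigma_j(t)=1\}}$ is the number of $H_1F$-\emph{letters} received by coordinate $t$, which is Binomial$(n,\tfrac{1}{d_0})$; the length $m_t$ of the canonical alternate form is the number of \emph{maximal runs} of consecutive indices $j$ with $\sigma_j(t)=1$, since consecutive $H_1F$-letters merge into a single factor $k_i^t$. Your proposed reconciliation (``a rooted separator on either side'') points the wrong way: it would give probability $\frac{d_0-1}{d_0}\cdot\frac{1}{d_0}\cdot\frac{d_0-1}{d_0}$, not $p_0$. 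The correct count, which is the paper's, is one-sided: since $(\sigma_j(t))_{j=1}^n$ is an i.i.d.\ uniform sequence on $\{1,\dots,d_0\}$ (exactly by your conditioning argument), one has $m_t=\#\{j:\sigma_j(t)=1 \textrm{ and } \sigma_{j+1}(t)\neq 1\}$ (with the obvious convention at $j=n$), and each such event has probability $\frac{1}{d_0}\cdot\frac{d_0-1}{d_0}=p_0$. Equivalently, the number of packs is $1$ plus the number of type-transitions, each transition occurring with probability $2p_0$. Note also that these block-ending indicators are only $1$-dependent, not independent (consecutive ones cannot both occur), so the Bernoulli description should be read as giving the mean and a large-deviation bound --- which is all that is used later --- rather than the exact binomial law; your appeal to Cram\'er for i.i.d.\ summands needs the same adjustment.
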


This lemma is a restating of Lemma 4.6 in \cite{Bri1}. A brief proof is
given below.
\begin{pf*}{Proof of Lemma \ref{randomRP}}
As in Proposition \ref{rp}, write $Y_n=k_1^{\s_1}\cddots k_n^{\s_n}\s_{n+1}$, where each factor has an image in the wreath product $k_j^{\s
_j}=(b_{j,\s_j(1)},\ldots,\break k_j',\ldots,\allowbreak b_{j,\s_j(d_0)})$ with $k'_j
\in H_1F$ in position $\s_j(t)$ and $b_{j,s} \in S_{d_1}$. As $(\s_j)_{j=1}^n$ is a sequence of independent terms equidistributed in
$S_{d_0}$ [for $\s_j=s_1\cddots s_j$ with $(s_i)_{i=1}^n$ is a sequence
of independent terms equidistributed in $S_{d_0}$], the position
sequence $(\s_j(t))_{j=1}^n$ is equidistributed in $\{1,\ldots,d_0\}$
for any choice of $t$.

Then for a fixed $t$, $Y_n^t$ is a product of $n$ terms which are
either $b_{j,\s_j(t)}$ at times $j$ when $\s_j(t) \neq1$, which
happens with probability $\frac{d_0-1}{d_0}$, or $k'_j$ at times $j$
when $\s_j(t)=1$, which happens with probability $\frac{1}{d_0}$. In
both cases, the factors are equidistributed in $S_{d_1}$ or $H_1F$,
respectively, because $k_j$ is equidistributed in $\HF$ by saturation.
Moreover, all the terms are independent.

Now to obtain an alternate word, the runs of successive terms that
belong to the same finite group (either $S_{d_1}$ or $H_1F$) are
merged, the factors $(s_j^t,k_j^t)_j$ are still equidistributed and independent.

There remains to count the number of such runs, given by
\[
2m_t+1=1+\sum_{j=1}^n
1_{\{(\s_j(t)=1\ \mathrm{and}\ \s_{j+1}(t)
\neq1)\ \mathrm{or}\ (\s_j(t)\neq1\ \mathrm{and}\ \s_{j+1}(t) =
1)\}}.
\]
Knowing\vspace*{1pt} that $P(\s_j(t)=1)=\frac{1}{d_0}$ and $P(\s_j(t)\neq
1)=\frac{d_0-1}{d_0}$ independently of previous terms, two successive
terms belong to different finite groups with probability $2\frac
{d_0-1}{d_0}\frac{1}{d_0}=2p_0$.\vspace*{1pt}

Note that $m_t$ depends only on $\s_1,\ldots,\s_n$, that is, on
$s_1,\ldots,s_n$, whereas the factors $(s_j^t,k_j^t)$ are determined by
$k_1,\ldots,k_n$. In particular, fixing $\s_1,\ldots,\s_n$ (hence~$m_t$), and playing with $k_1,\ldots,k_n$ any alternate word $Y_n^t$ of
length $m_t$ in $S_{d_1}\sqcup H_1F$ appears with the same probability.
\end{pf*}

The lemma can be iterated to show that for any vertex $v$ in $T_{\bar
d}$, the random product $Y_n^v$ obtained by rewriting process of the
random alternate word $Y_n$ is also an alternate random word in the
group $\G(S_{d_{|v|}},H_{|v|}F)$, of length $m_v \sim p_0\cddots
p_{|v|}n$ almost surely, that is, the conditioned variable
$Y_n^v|m_v=Y_{m_v}^{(|v|)}$ is a random alternate word in $\G
(S_{d_{|v|}},H_{|v|}F)$ of length $m_v$.

\subsection{Exponent sequence associated to valency sequence}\label
{defbeta} Given a bounded sequence $\bbar d=(d_i)_i$ of integers $\geq
2$, define $\bbar p=(p_i)_i$ by $p_i=\frac{d_i-1}{d_i^2}$. Note $d=\min
(d_i)$, $D=\max(d_i)$, $p=\max(p_i)=\frac{d-1}{d^2}$ and $P=\min
(p_i)=\frac{D-1}{D^2}$. Define the \textit{exponent function} $\b(n)$
associated to the valency sequence $\bbar d$ by
\[
k(n)=k_{\bar d}(n)=\min\{k|p_0\cddots p_k n \leq1
\}
\quad\mbox{and}\quad
\b (n)=\b_{\bar d}(n)=\frac{\log(d_0\cddots d_{k(n)})}{\log n}.
\]

Moreover, given a small $\th\neq0$ and an integer $N_0$ depending
only on $\theta$, set
\[
k^\th(n)=\min\bigl\{k|(p_0+\th)\cddots(p_k+
\th)n \leq N_0\bigr\} \quad\mbox{and}\quad \b^\th(n)=
\frac{\log(d_0\cddots d_{k^\th(n)})}{\log n}.
\]
For $N_0=1$, the function $k^\th(n)$ is increasing with $\th$, and
$\b^\th(n) \longrightarrow_{\th\rightarrow0} \b(n)$ for a fixed $n$.
%
\begin{proposition}\label{thetapprox}
There exists a function $\varepsilon(\th) \longrightarrow_{\th
\rightarrow0} 0$ such that for all $n$ large enough (depending on $\th$),
\[
\bigl|\b(n)-\b^\th(n)\bigr| \leq\varepsilon(\th).
\]
\end{proposition}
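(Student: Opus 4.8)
The plan is to compare the ``cutoff levels'' $k(n)$ and $k^\th(n)$ and show that they differ by at most an additive constant depending only on $\th$ (and tending to a controlled size as $\th \to 0$), then convert this bounded level gap into a bound on $|\b(n)-\b^\th(n)|$ using that $\log n$ grows while $d_0\dots d_{k}$ changes by a bounded multiplicative factor per level. First I would record the elementary monotonicity: since each $p_i \in [P,p]$ with $0 < P \le p < 1$, the product $p_0\dots p_k$ decays geometrically, at least as fast as $p^{k+1}$ and at most as fast as $P^{k+1}$; this pins down $k(n)$ between $\frac{\log n}{\log(1/P)}$ and $\frac{\log n}{\log(1/p)}$ (up to $O(1)$), and similarly for $k^\th(n)$ with $p+\th$, $P+\th$ in place of $p$, $P$ and $N_0$ on the right-hand side.

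Next I would estimate the difference $k^\th(n) - k(n)$ directly. Writing $\Pi_k = p_0 \dots p_k$ and $\Pi_k^\th = (p_0+\th)\dots(p_k+\th)$, we have $\Pi_k^\th / \Pi_k = \prod_{i=0}^k (1 + \th/p_i) \le (1+\th/P)^{k+1}$, so passing from the condition $\Pi_k n \le 1$ to $\Pi_k^\th n \le N_0$ costs roughly a factor $(1+\th/P)^{k+1} / N_0$ in the product, i.e. an additional $\Delta$ levels where, since one level multiplies the product by a factor in $[P, p]$, we get $\Delta \le \frac{(k+1)\log(1+\th/P) + \log N_0^{-1}}{\log(1/p)} + O(1)$. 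The point is that $\log(1+\th/P) \to 0$ as $\th \to 0$; but $k$ itself is of order $\log n / \log(1/P)$, so the term $(k+1)\log(1+\th/P)$ is of order $\log n \cdot \frac{\log(1+\th/P)}{\log(1/P)}$, which is a \emph{constant times} $\log n$, not $o(\log n)$ for fixed $\th$. Hence $\Delta = O(\log n)$ with implied constant $c'(\th) \to 0$ as $\th \to 0$ (here I would fix $N_0$, say $N_0 = 1$, so $\log N_0^{-1}=0$, consistent with the remark in the excerpt that $k^\th$ is increasing in $\th$ for $N_0=1$).

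Finally I would convert the level gap into the exponent gap. We have
$$\b^\th(n) - \b(n) = \frac{\log(d_{k(n)+1}\dots d_{k^\th(n)})}{\log n},$$
and since each $d_i \le D$ and there are $\Delta = k^\th(n)-k(n)$ factors, the numerator is at most $\Delta \log D \le c'(\th)\log n \cdot \log D + O(1)$. Dividing by $\log n$ gives $|\b^\th(n)-\b(n)| \le c'(\th)\log D + \frac{O(1)}{\log n}$, so for $n$ large enough (depending on $\th$) this is at most $2c'(\th)\log D =: \e(\th)$, which tends to $0$ as $\th \to 0$. This establishes the claim; I would set $\e(\th) := 2 c'(\th)\log D$ (or any explicit bound of this shape).

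The main obstacle — and the one subtlety worth stating carefully rather than waving through — is precisely that the level gap $k^\th(n)-k(n)$ is \emph{not} bounded uniformly in $n$: it grows linearly in $k(n)$, hence linearly in $\log n$. What saves the statement is that the proportionality constant is $\log(1+\th/P)/\log(1/p)$, which $\to 0$ with $\th$, so after dividing by $\log n$ in the definition of $\b$ one still gets a bound that is uniform in $n$ and small in $\th$. I would make sure to phrase the inequalities so that the $\log n$ factors cancel cleanly and only the $\th$-dependent constant survives; the geometric-decay bounds on $\Pi_k$ and $\Pi_k^\th$ from the first step are exactly what make this bookkeeping routine.
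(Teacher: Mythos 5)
Your proposal is correct and follows essentially the same route as the paper's proof: both bound the level gap $k^\th(n)-k(n)$ by a quantity proportional to $k(n)\sim\log n$ with a proportionality constant (coming from $\prod_i(1+\th/p_i)$, equivalently $\prod_i\frac{p_i}{p_i+\th}$) that tends to $0$ as $\th\to 0$, and then convert this into the exponent bound via $d_i\le D$ and division by $\log n$. The subtlety you flag — that the gap is $O(\log n)$ rather than $O(1)$, saved by the vanishing constant — is exactly the point the paper's argument turns on.
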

\begin{pf} Assume $\th>0$ (similar proof for $\th<0$). For $n$ large
enough, $k^\th(n) \geq k(n)$, and the difference is
\begin{eqnarray*}
\bigl|\b(n)-\b^\th(n)\bigr| &=& \bigl|{\log_n} (d_0\cddots
d_{k(n)})-\log_n (d_0\cddots d_{k^\th(n)})\bigr|
\\
&=& \log_n (d_{k(n)+1}\cddots d_{k^\th(n)})
\end{eqnarray*}
and hence is bounded by
\[
\bigl|\b(n)-\b^\th(n)\bigr| \leq\bigl|k^\th(n)-k(n)\bigr|
\frac{\log D}{ \log n}.
\]
By the definition of $k(n)$ and $k^\th(n)$, there are inequalities
\[
\frac{N_0}{n} \geq(p_0+\th)\cddots(p_{k^\th(n)}+\th) >
\frac
{N_0(p_{k^{\th}(n)}+\th)}{n} \geq p_0\cddots p_{k(n)}N_0(P+
\th),
\]
so that
\begin{eqnarray*}
(p+\th)^{k^\th(n)-k(n)} &\geq& (p_{k(n)+1}+\th )\cddots(p_{k^\th(n)}+
\th)
\\
&=& \frac{(p_0+\th)\cddots(p_{k^\th
(n)}+\th)}{(p_0+\th)\cddots(p_{k(n)}+\th)}
\\
&\geq& \biggl( \frac{p_0}{p_0+\th} \biggr) \cddots \biggl( \frac
{p_{k(n)}}{p_{k(n)}+\th}
\biggr)N_0(P+\th)
\\
&\geq& \biggl( \frac{P}{P+\th} \biggr)^{k(n)}N_0(P+
\th),
\end{eqnarray*}
which shows that for some constant $K$,
\[
\bigl|k^\th(n)-k(n)\bigr| \leq k(n) \biggl\llvert \frac{\log({P}/({P+\th
}))}{\log(p+\th)} \biggr
\rrvert +K.
\]
Notice that $ \frac{\log n}{|{\log P}|} \leq k(n) \leq\frac{\log
n}{|{\log p}|}$ to finally obtain
\[
\bigl|\b^\th(n)-\b(n)\bigr| \leq\frac{\log D}{|{\log p}|}\biggl\llvert
\frac{\log
({P}/({P+\th}))}{\log(p+\th)} \biggr\rrvert + \frac{K'}{\log n}.
\]
The proposition follows from the limit $|{\log}(\frac{P}{P+\th})
|\longrightarrow_{\th\rightarrow0} 0$.
\end{pf}
%
\begin{fact}\label{bornesup} There is a bound on $\b(n)$ depending
only on the bounds $d \leq d_i \leq D$ on the valency of the rooted
tree $T_{\bar d}$. Namely for some constant $C$ depending only on $D$,
\[
\b_d \leq\b(n) \leq\b_D+\frac{C}{\log n},
\]
where $\b_d=\frac{\log d}{- \log p}=\frac{\log d}{ \log(
{d^2}/({d-1}))}=\frac{1}{1+{\log({d}/({d-1}))}/{\log d}}=\frac
{1}{2-{\log(d-1)}/{\log d}}$.
\end{fact}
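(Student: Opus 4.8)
The plan is to reduce the whole estimate to a single per-level comparison between $\log d_i$ and $|\log p_i|$ and then sum over the levels $0,\dots,k(n)$.

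\textbf{The key monotonicity.} For $x\ge 2$ put
$$f(x)=\frac{\log x}{-\log\frac{x-1}{x^2}}=\frac{\log x}{2\log x-\log(x-1)}=\frac{1}{2-\frac{\log(x-1)}{\log x}},$$
so that $f(d_i)=\frac{\log d_i}{|\log p_i|}$, while $f(d)=\b_d$ and $f(D)=\b_D$. First I would check that $x\mapsto\frac{\log(x-1)}{\log x}$ is increasing on $[2,\infty)$: the numerator of its derivative is $\frac{\log x}{x-1}-\frac{\log(x-1)}{x}$, which is positive since $\log x>\log(x-1)$ and $\frac{1}{x-1}>\frac1x$. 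Hence $f$ is increasing, and since $d\le d_i\le D$ we get for every $i$
$$\b_d\,|\log p_i|\ \le\ \log d_i\ \le\ \b_D\,|\log p_i|.$$
(Along the way, the fact that $\phi(x)=\frac{x-1}{x^2}$ is decreasing on $[2,\infty)$ also confirms $p=\max_ip_i=\phi(d)$, $P=\min_ip_i=\phi(D)$, and that $|\log P|=\max_i|\log p_i|$; and from $g(x)=\frac{\log(x-1)}{\log x}\in[0,1)$ one sees $\tfrac12\le\b_d\le\b_D<1$.)

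\textbf{Lower bound.} Write $m=k(n)$. Summing the left inequality above over $i=0,\dots,m$ and using $\sum_{i=0}^m|\log p_i|=|\log(p_0\cdots p_m)|$ together with the defining inequality $p_0\cdots p_m\,n\le 1$, i.e.\ $|\log(p_0\cdots p_m)|\ge\log n$, gives
$$\log(d_0\cdots d_m)=\sum_{i=0}^m\log d_i\ \ge\ \b_d\sum_{i=0}^m|\log p_i|\ \ge\ \b_d\log n,$$
so $\b(n)=\frac{\log(d_0\cdots d_m)}{\log n}\ge\b_d$.

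\textbf{Upper bound.} Summing the right inequality gives $\log(d_0\cdots d_m)\le\b_D\bigl(|\log p_m|+|\log(p_0\cdots p_{m-1})|\bigr)$. By minimality of $k(n)$ one has $p_0\cdots p_{m-1}\,n>1$ (the empty-product case $m=0$ just reads $n>1$), hence $|\log(p_0\cdots p_{m-1})|<\log n$, while $|\log p_m|\le|\log P|$. Therefore $\log(d_0\cdots d_m)<\b_D(|\log P|+\log n)$, and dividing by $\log n$,
$$\b(n)\ \le\ \b_D+\frac{\b_D|\log P|}{\log n}\ \le\ \b_D+\frac{C}{\log n},$$
with $C=\b_D|\log P|$ depending only on $D$ since $\b_D<1$ and $P=\frac{D-1}{D^2}$.

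\textbf{Expected main obstacle.} Steps 2 and 3 are pure bookkeeping; the only genuine content is the monotonicity of $f$, equivalently of $x\mapsto\frac{\log(x-1)}{\log x}$. It is an elementary calculus check, but it is the step that truly exploits the precise form $p_i=\frac{d_i-1}{d_i^2}$: the cruder sandwich $P^{k+1}\le p_0\cdots p_k\le p^{k+1}$ would only produce the weaker lower bound $\b(n)\ge\frac{\log d}{|\log P|}$ instead of the claimed $\b(n)\ge\b_d=\frac{\log d}{|\log p|}$.
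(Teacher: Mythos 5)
Your proof is correct and follows essentially the same route as the paper: both arguments reduce to the monotonicity of $x\mapsto\frac{\log x}{2\log x-\log(x-1)}$ on $[2,D]$, compare $\log(d_0\cdots d_{k(n)})$ with $|\log(p_0\cdots p_{k(n)})|$ using the defining inequalities of $k(n)$, and absorb the last factor $p_{k(n)}$ into the $C/\log n$ error term. The only cosmetic difference is that you sum the per-level inequality $\b_d|\log p_i|\le\log d_i\le\b_D|\log p_i|$, whereas the paper packages the same comparison as a ratio-of-averages (mediant) inequality, with the monotonicity left implicit.
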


Note that $\b_2=\frac{1}{2}$ and $\b_d \longrightarrow_{d
\rightarrow
\infty} 1$.
\begin{pf*}{Proof of Fact \ref{bornesup}}
Note that
\begin{eqnarray*}
\b(n)&=&\frac{\log(d_0\cddots d_{k(n)})}{\log n} \geq \frac{\log(d_0\cddots d_{k(n)})}{-\log(p_0 \cddots p_{k(n)})},
\\
\b(n) &\leq& \frac{\log(d_0\cddots d_{k(n)})}{-\log(p_0 \cddots
p_{k(n)})+\log p_{k(n)}}\leq\frac{\log(d_0\cddots d_{k(n)})}{-\log
(p_0 \cddots p_{k(n)})}+\frac{C}{\log n}.
\end{eqnarray*}
As $p_i=\frac{d_i-1}{d_i^2}$, simply compute
\begin{eqnarray*}
&&\frac{\log(d_0 \cddots d_{k(n)})}{\log({d_0^2}/({d_0-1})\cddots
{d_{k(n)}^2}/({d_{k(n)}-1}))} \\
&&\qquad= 1\Big/\biggl({1+\frac{\log(
{d_0}/({d_0-1}))+\cdots+ \log({d_{k(n)}}/({d_{k(n)}-1})) }{\log
d_0+\cdots+\log d_{k(n)}}}\biggr) \\
&&\qquad\leq\frac{1}{1+{\log(
{D}/({D-1}))}/{\log D}},
\end{eqnarray*}
and similarly for the lower bound, by the inequality on ratio of average
\begin{eqnarray*}
\frac{\log({D}/({D-1}))}{\log D} &\leq&\frac{\log(
{d_0}/({d_0-1}))+\cdots+ \log({d_{k(n)}}/({d_{k(n)}-1})) }{\log
d_0+\cdots+\log d_{k(n)}}\\
&\leq&\frac{\log({d}/({d-1}))}{\log d}.
\end{eqnarray*}
\upqed
\end{pf*}

For a real number $x>0$, define $h_{\bar d}(x)=d_0\cddots d_{k(x)}$ for
the unique integer $k(x)$ such that $\frac{1}{p_0}\cddots\frac
{1}{p_{k(x)}}\geq x > \frac{1}{p_0}\cddots\frac{1}{p_{k(x)-1}}$. In
particular, $n^{\b_{\bar d}(n)}=h_{\bar d}(n)$ for any integer $n$.
%
\begin{fact}\label{classfunction}
Given a nondecreasing function $g(x)$ such that
\[
dg(x) \leq g \biggl(\frac{d^2}{d-1}x \biggr)
\quad\mbox{and}\quad g \biggl(
\frac{D^2}{D-1}x \biggr)\leq Dg(x),
\]
there exists a sequence $\bbar d$ in $\{d,D\}^\N$ and a constant $C$
such that
\[
\frac{1}{C} g(x) \leq h_{\bar d}(x) \leq Cg(x).
\]
\end{fact}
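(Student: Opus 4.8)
The plan is to construct the sequence $\bar d \in \{d,D\}^\N$ greedily, reading off the digits one at a time so that the partial products $\frac{1}{p_0}\cdots\frac{1}{p_{k-1}}$ track the points where $g$ multiplies by roughly $d$ or $D$. Recall that, by definition, $h_{\bar d}(x) = d_0\cdots d_{k(x)}$ where $k(x)$ is chosen so that $\frac{1}{p_0}\cdots\frac{1}{p_{k(x)}} \geq x > \frac{1}{p_0}\cdots\frac{1}{p_{k(x)-1}}$; writing $P_k = \frac{1}{p_0}\cdots\frac{1}{p_{k-1}}$ (with $P_0=1$), the function $h_{\bar d}$ is a step function that jumps from $d_0\cdots d_{k-1}$ to $d_0\cdots d_k$ at $x = P_k$. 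So the whole content is: choose the digits $d_i$ so that on each interval $(P_k, P_{k+1}]$ the (constant) value $d_0\cdots d_k$ of $h_{\bar d}$ is within a bounded multiplicative factor of $g$ evaluated anywhere on that interval. Since $p_i \in \{p,P\}$ with $p = \frac{d-1}{d^2}$ and $P = \frac{D-1}{D^2}$, choosing $d_k = d$ multiplies $P_{k+1}/P_k$ by $\frac{d^2}{d-1}$ and multiplies the value of $h_{\bar d}$ by $d$, while choosing $d_k = D$ multiplies the horizontal scale by $\frac{D^2}{D-1}$ and the vertical value by $D$ — and these are exactly the two hypotheses on $g$.

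The recursion is as follows. Suppose inductively that $\frac1C g(P_k) \leq d_0\cdots d_{k-1} \leq C g(P_k)$ for an appropriate absolute constant $C$ (to be pinned down). To choose $d_k$, compare $g\!\left(\frac{d^2}{d-1}P_k\right)$ with $d\cdot g(P_k)$ and $g\!\left(\frac{D^2}{D-1}P_k\right)$ with $D\cdot g(P_k)$. The hypotheses give $g\!\left(\frac{d^2}{d-1}P_k\right) \geq d\, g(P_k)$ and $g\!\left(\frac{D^2}{D-1}P_k\right) \leq D\, g(P_k)$; combined with monotonicity of $g$ and the fact that $\frac{d^2}{d-1} < \frac{D^2}{D-1}$ (which holds since $t \mapsto \frac{t^2}{t-1}$ is increasing for $t \geq 2$), the ratio $g(P_{k+1})/g(P_k)$ as a function of the choice $d_k \in \{d,D\}$ takes one value that is $\geq d$ (when $d_k = D$, using monotonicity from $\frac{d^2}{d-1}P_k$ up) — no wait, let me instead argue directly: I will pick $d_k$ to be whichever of $d, D$ makes $d_0\cdots d_k$ closest to $g(P_{k+1})$ in multiplicative terms, and check that at least one choice keeps the ratio bounded. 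The key inequality is that $g(P_{k+1}^{(d)}) / g(P_{k+1}^{(D)})$ — the ratio of $g$ at the two candidate next breakpoints — together with $D/d$ bracket things so that whichever digit is chosen, $g(P_{k+1})$ changes by a factor in $[d/\!\max, D\cdot\max]$ of the change in $h_{\bar d}$; I would make this precise by noting $d \leq g(P_{k+1})/g(P_k) \leq D$ must hold for the appropriate choice because the two hypotheses sandwich $g$ at the two candidate points relative to $d g(P_k)$ and $D g(P_k)$ respectively, and $g$ is monotone. Then the inductive bound propagates with the same $C$, provided $C$ is chosen once at the start large enough to absorb the initial segment and the worst single-step slack (something like $C = D^2$ suffices).

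Having controlled $h_{\bar d}$ at the breakpoints $P_k$, I extend the estimate to all $x > 0$: given arbitrary $x$, let $k = k(x)$ so that $x \in (P_k, P_{k+1}]$. Then $h_{\bar d}(x) = d_0\cdots d_k$. On one side, $h_{\bar d}(x) = d_0\cdots d_k \leq C g(P_{k+1})$, wait I need $d_0\cdots d_k$ compared to $g(P_{k+1})$: from the inductive bound at step $k+1$, $d_0 \cdots d_k \leq C g(P_{k+1})$, and since $x \leq P_{k+1}$ I cannot directly use monotonicity in the right direction, so instead I use $d_0\cdots d_k = (d_0\cdots d_{k-1})\cdot d_k \leq C g(P_k)\cdot D \leq CD\, g(x)$ using $g(P_k) \leq g(x)$. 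On the other side, $d_0\cdots d_k \geq \frac1C g(P_k)$ is not immediately $\geq$ const $\cdot g(x)$ since $x$ can be as large as $P_{k+1}$; here use $d_0\cdots d_k \geq \frac1C g(P_{k+1}) \geq \frac1C g(x)$ from the inductive bound at step $k+1$ together with monotonicity ($x \leq P_{k+1}$). Thus $\frac1C g(x) \leq h_{\bar d}(x) \leq CD\, g(x)$, and replacing $C$ by $CD$ gives the claim.

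The main obstacle is the combinatorial bookkeeping in the inductive step — making rigorous the claim that at each stage \emph{at least one} of the two digit choices keeps the ratio $h_{\bar d}(P_{k+1})/g(P_{k+1})$ inside the fixed band $[1/C, C]$. This is where the two hypotheses $dg(x) \leq g(\frac{d^2}{d-1}x)$ and $g(\frac{D^2}{D-1}x) \leq Dg(x)$ are used in an essential way: the first prevents $h_{\bar d}$ from being forced too far below $g$ (choosing $d_k = d$ cannot let $g$ outrun the factor $d$), the second prevents it being forced too far above (choosing $d_k = D$ forces $g$ to grow by at most $D$, and since we are allowed to choose $d_k = d$ for a smaller horizontal step we retain enough flexibility). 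One must check the ``interlacing'' carefully: after choosing, say, $d_k = d$ many times in a row, the breakpoints $P_k$ form a geometric sequence with ratio $\frac{d^2}{d-1}$ and $h_{\bar d}$ grows geometrically with ratio $d$, matching a lower-bound growth of $g$; the hypotheses guarantee that one never needs a ``digit between $d$ and $D$'', which is exactly the statement that $g$'s growth rate is pinned between that of the two constant sequences. I would organize this as: define $d_k$ by the rule ``$d_k = d$ if $d_0\cdots d_{k-1}\cdot d \leq g(\tfrac{d^2}{d-1}P_k)$, else $d_k = D$'', then verify both inequalities $\tfrac1C g(P_{k+1}) \leq d_0\cdots d_k \leq C g(P_{k+1})$ hold in each of the two cases using the respective hypothesis plus the inductive hypothesis at step $k$.
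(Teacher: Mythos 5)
The overall strategy (greedy digit-by-digit construction controlling the ratio $h_{\bar d}/g$ at the breakpoints, then extending to all $x$ by monotonicity and boundedness of $P_{k+1}/P_k$) is the right one and is the paper's. But the heart of the argument --- the verification that some selection rule keeps the ratio in a fixed band $[1/C,C]$ --- is never actually carried out: you repeatedly defer it (``I would make this precise'', ``one must check the interlacing carefully''), and the one explicit rule you finally commit to, namely ``$d_k=d$ if $d_0\cdots d_{k-1}\cdot d\leq g(\frac{d^2}{d-1}P_k)$, else $d_k=D$'', is inverted and provably fails. Indeed, if $h(P_k)\leq g(P_k)$ then the first hypothesis gives $d\,h(P_k)\leq d\,g(P_k)\leq g(\frac{d^2}{d-1}P_k)$, so your rule selects $d_k=d$, and then $h(P_{k+1})=d\,h(P_k)\leq g(P_{k+1})$, so the same situation recurs forever. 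Taking $g(x)=x^{\beta_D}$ with $\beta_D=\log D/\log\frac{D^2}{D-1}$ (which satisfies both hypotheses since $\beta_D\geq\beta_d$), each such step multiplies the ratio $h(P_k)/g(P_k)$ by the fixed factor $d\,(\frac{d^2}{d-1})^{-\beta_D}<1$, so the ratio tends to $0$ geometrically and no constant $C$ exists: your rule produces $h_{\bar d}(x)\approx x^{\beta_d}$ against $g(x)=x^{\beta_D}$.

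The correct rule is the opposite of yours, and it is what the paper uses: choose $d_{k+1}=d$ when $h(x_k)\geq g(x_k)$ (the short step forces $g$ to grow by at least $d$ while $h$ grows by exactly $d$, so $h$ cannot pull further ahead, and monotonicity plus the second hypothesis bound how far $g$ can overshoot, giving a ratio $\geq d/D$); choose $d_{k+1}=D$ when $h(x_k)<g(x_k)$ (the long step lets $h$ grow by $D$ while $g$ grows by at most $D$, so $h$ cannot fall further behind, and the first hypothesis caps the overshoot at $D/d$). This keeps $\frac{d}{D}\leq h(x_k)/g(x_k)\leq\frac{D}{d}$ for all $k$ by induction. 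Your final paragraph (passing from breakpoints to arbitrary $x$) is fine once the breakpoint estimate is in place, but as written the proposal does not contain a proof of the breakpoint estimate, and the rule it proposes cannot yield one.
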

\begin{pf}
Set $x_k=\frac{1}{p_0}\cddots\frac{1}{p_{k}}$ so that
$h(x_{k+1})=d_0\cddots d_{k+1}=h(x_k)d_{k+1}$ and $x_{k+1}=\frac
{1}{p_{k+1}}x_k$. Assume by induction that $d_0,\ldots,d_k$ are
constructed. Then:
\begin{longlist}[(2)]
\item[(1)] if $\frac{h(x_k)}{g(x_k)}\geq1$, set $d_{k+1}=d$, and obtain
\[
\frac{dh(x_k)}{Dg(x_k)} \leq\frac{dh(x_k)}{g(x_k/P)} \leq \frac{h(x_{k+1})}{g(x_{k+1})} =
\frac{h(x_k/p)}{g(x_k/p)} \leq\frac{dh(x_k)}{dg(x_k)};
\]
\item[(2)] if $\frac{h(x_k)}{g(x_k)} < 1$, set $d_{k+1}=D$, and obtain
\[
\frac{Dh(x_k)}{Dg(x_k)} \leq\frac{Dh(x_k)}{g(x_k/P)}= \frac{h(x_{k+1})}{g(x_{k+1})}\leq
\frac{Dh(x_k)}{g(x_k/p)}\leq\frac{Dh(x_k)}{dg(x_k)}.
\]
\end{longlist}
This shows that $\frac{d}{D} \leq\frac{h(x_k)}{g(x_k)} \leq\frac
{D}{d}$. As moreover $\frac{P}{p} \leq\frac{x_k}{x_{k+1}} \leq\frac
{p}{P}$ and $g$ is nondecreasing, this proves the fact.
\end{pf}

\subsection{Expected activity} The expectation of activity is ruled by
the exponent sequence.
%
\begin{lemma}\label{expectact}
For any $\th>0$ and $n$ large enough, there exists $C_\th>0$ such that
\[
\frac{1}{C_\th} n^{\b^{-\th}(n)} \leq\E a(Y_n) \leq
C_\th n^{\b
^\th(n)},
\]
where the functions $\b(n),\b^\th(n)$ are defined in Section \ref
{defbeta}. In particular, for any $\varepsilon>0$ and $n$ large enough,
\[
\biggl\llvert \frac{\log\E a(Y_n)}{\log n}-\b(n)\biggr\rrvert \leq\varepsilon.
\]
\end{lemma}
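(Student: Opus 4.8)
The plan is to estimate $\E a(Y_n)$ by tracking how activity evolves under iterated rewriting. By Proposition \ref{counta}, activity is additive: $a(Y_n)=a(Y_n^1)+\dots+a(Y_n^{d_0})$, and by Lemma \ref{randomRP} each coordinate $Y_n^t$ is, conditionally on its length $m_t$, a random alternate word $Y'_{m_t}$ in $\G(S_{d_1},H_1F)$. So if I write $E_j(n)=\E a(Y_n)$ for the expected activity of a random alternate word of length $n$ in $\G(S_{d_j},H_jF)$ (activity at ``level $j$''), additivity plus the fact that the $d_0$ coordinates are identically distributed gives
$$\E a(Y_n)=d_0\,\E\bigl[E_1(m_t)\bigr],$$
where $m_t$ is the sum of $n$ i.i.d.\ Bernoulli($p_0$) variables. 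Iterating $k$ times,
$$\E a(Y_n)=d_0d_1\cdots d_{k-1}\,\E\bigl[E_k(m_v)\bigr],$$
where $m_v$ is a random length that concentrates around $p_0\cdots p_{k-1}n$. The base case is that $E_k(m)$ is bounded above and below by constants once $m$ is of order $1$: the lower bound $E_k(m)\geq 1$ holds as soon as the word has a nonzero factor in $H_kF$ (probability bounded below), and the upper bound $E_k(m)\leq \text{const}$ holds because $a(w)\leq$ length of $w$ and $\E m$ is bounded, or more simply because for $m$ small the minimal tree has bounded size. So I want to choose $k$ so that $p_0\cdots p_{k-1}n$ is of order $1$ — and $k(n)$ (respectively $k^{\pm\th}(n)$) is exactly designed to do that.

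The key steps, in order: (1) Record the additive recursion above and iterate it $k$ times. (2) For the \emph{upper} bound, take $k=k^\th(n)$: split the expectation according to whether $m_v\in[(p_0-\th)\cdots(p_{k-1}-\th)n,\ (p_0+\th)\cdots(p_{k-1}+\th)n]$ or not. On the good event, $m_v$ is of order $N_0$ by the definition of $k^\th(n)$, so $E_k(m_v)\leq C$ for a uniform constant $C$ (uniform over levels because valency is bounded by $D$ and $\#HF$ is fixed); this contributes $\leq C\,d_0\cdots d_{k^\th(n)-1}\leq C\,n^{\b^\th(n)}$. On the bad event, use the large-deviation bound from Lemma \ref{randomRP} (iterated over levels, costing at most a factor $k$, which is $O(\log n)$) together with the crude deterministic bound $a(Y_n)\leq n$: this contributes $\leq n\cdot k\,c_\th^{\,n}$ (or a similar exponentially small term), which is negligible. (3) For the \emph{lower} bound, take $k=k^{-\th}(n)$ and keep only the good event: $\E a(Y_n)\geq d_0\cdots d_{k^{-\th}(n)-1}\cdot P(\text{good})\cdot \inf_{m\geq 1}E_{k^{-\th}(n)}(m)\geq \frac{1}{C_\th}n^{\b^{-\th}(n)}$, since $P(\text{good})$ is bounded below (it tends to $1$) and $\inf_{m\ge 1}E_k(m)\ge c>0$ uniformly in $k$. (4) Finally, feed these into Proposition \ref{thetapprox} and Fact \ref{bornesup}: since $\b^{\pm\th}(n)\to\b(n)$ uniformly up to $\e(\th)$, and $\frac{\log C_\th}{\log n}\to 0$, taking $\th$ small and then $n$ large gives $\bigl|\frac{\log \E a(Y_n)}{\log n}-\b(n)\bigr|\le\e$.

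The main obstacle is controlling the random length $m_v$ after $k\asymp\log n$ iterations: a naive application of the law of large numbers at each level is not enough because the number of levels grows, and the conditional lengths at a given level are themselves random. The clean way around this is exactly why the statement is phrased with $p_i\pm\th$ rather than $p_i$: at each level the conditional length of $Y_n^v$ given $m_v$ is a sum of $m_v$ i.i.d.\ Bernoulli variables, so with probability $\geq 1-c_\th^{\,m_v}$ it lies in $[(p_{|v|}-\th)m_v,(p_{|v|}+\th)m_v]$; a union bound over the at most $D^k=\mathrm{poly}(n)$ vertices of depth $<k$ shows that, with probability $\to 1$, \emph{all} these length estimates hold simultaneously, so $m_v$ stays trapped between $(p_0\mp\th)\cdots(p_{|v|-1}\mp\th)n$ for every $v$ on the path. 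Once this simultaneous concentration is established, the additive recursion closes up cleanly and both bounds follow; the deterministic bound $a(Y_n)\le n$ handles the remaining negligible event. I would also remark that the upper-bound constant and lower-bound constant can be taken uniform in the level because all the groups $\G(S_{d_j},H_jF)$ live inside trees of valency $\le D$ with $\#HF$ fixed, exactly as in the uniform exponential bound of Proposition \ref{counta}.
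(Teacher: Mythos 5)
Your overall strategy --- the additive recursion $\E a(Y_n)=\sum_t \E a(Y_n^t)$, iteration down to depth $k^{\pm\th}(n)$, and $O(1)$ bounds at the base --- is the same as the paper's, but the mechanism you propose for controlling the random lengths $m_v$ has a genuine gap. You claim that a union bound over the $\mathrm{poly}(n)$ vertices of depth $<k$ shows that with probability $\to 1$ all the estimates $m_{vt}\in[(p_{|v|}-\th)m_v,(p_{|v|}+\th)m_v]$ hold simultaneously, and that the bad event contributes at most $n\cdot k\,c_\th^{\,n}$. Neither is correct: the per-vertex failure probability is $c_\th^{\,m_v}$, not $c_\th^{\,n}$, and near depth $k(n)$ the lengths $m_v$ are of order $N_0=O(1)$ while the number of vertices at that depth is of order $n^{\b(n)}$; the union bound therefore gives a total failure probability of order $n^{\b(n)}c_\th^{\,N_0}$, which is not $o(1)$ --- it is not even $\leq 1$. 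Stopping the union bound at the depth where $m_v\gtrsim C\log n$ would salvage the exponent conclusion, but only yields $\E a(Y_n)\leq C\, n^{\b^\th(n)}\log n$, weaker than the constant-$C_\th$ bound claimed in the lemma.

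The paper's way around this --- which you do not invoke --- is the observation that $m\mapsto \E a(Y_m)$ is non-decreasing (since $a(ww')\geq a(w)$ by proposition \ref{counta}(4)). This lets one condition on $m_t$ one level at a time and replace the random length by its deterministic upper (resp.\ lower) bound \emph{inside the expectation}: $\E a_1(Y_n^t)\leq \E a_1(Y'_{(p_0+\th)n})+nc_\th^{\,n}$, where the large-deviation error is taken at a level where the word length really is $n$ (hence genuinely exponentially small) and is absorbed into an additive constant for $n\geq N_0$. Iterating this \emph{deterministic} recursion accumulates only $d_0\cdots d_{k-1}+\dots+1$ of error over all levels, which is dominated by the main term and gives the clean two-sided bound with a constant $C_\th$. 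You should restructure your steps (2) and (3) around this monotonicity rather than around simultaneous concentration of all the $m_v$.
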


For the following proof observe the fact that for any words $a(ww')
\geq a(w)$ by Proposition \ref{counta}(4), so that the function $\E
a(Y_n)$ is nondecreasing with $n$.
\begin{pf*}{Proof of Lemma \ref{expectact}}
By Proposition \ref{counta}, the activity of $Y_n$ relates to activity
on the inherited process by $a_0(Y_n) = a_1(Y_n^1)+\cdots
+a_1(Y_n^{d_0})$, where $a_k(w)$ is the activity function on the group
$\G(S_{d_k},H_kF)$. Thus
\[
\E a_0(Y_n) = \sum_{t=1}^{d_0}
\E a_1\bigl(Y_n^t\bigr).
\]
Now by Lemma \ref{randomRP}, the conditioned variable $Y_n^t|m_t$ is a
random alternate word $Y'_{m_t}$ of random length $m_t$ in the group
$\G(S_{d_1},H_1F)$. Compute by conditioning and the large deviation principle.
\begin{eqnarray*}
\E a_1\bigl(Y_n^t\bigr) &=& \sum
_{i=0}^n \E\bigl(a_1
\bigl(Y_n^t\bigr)|m_t=i\bigr)P(m_t=i)
\\
&\leq& \sum_{i \leq(p_0+\theta)n}P(m_t=i) \E
a_1(Y_i) +nP\bigl(m_t \geq
(p_0+\theta)n\bigr)
\\
&\leq& \E a_1\bigl(Y'_{(p_0+\theta)n}\bigr)+n
c_\theta^n,
\end{eqnarray*}
where $c_\theta<1$ [use that $\E a_1(Y'_n)$ is nondecreasing to bound
the sum]. This shows that for $N_0$ large enough so that $d_0N_0c_\th^{N_0}\leq1$ and $n \geq N_0$,
\[
\E a_0(Y_n) \leq d_0 \E a_1
\bigl(Y'_{(p_0+\theta)n}\bigr)+1.
\]

Note that $c_\theta$ and hence $N_0$ depends on $\th$ and on $d_0$,
but as the valency is bounded, they can be chosen uniform for all
$d_i$. This allows us to iterate the above inequality to get for all $k$,
\[
\E a_0(Y_n) \leq d_0\cddots
d_k \E a_{k+1}\bigl(Y^{(k+1)}_{(p_0+\theta)\cddots
(p_k+\theta)n}
\bigr)+d_0\cddots d_{k-1}+\cdots+d_0+1,
\]
provided $n$ is large enough. Recall that $Y_m^{(k)}$ is a random
alternate word of length $m$ in $\G(S_{d_k},H_kF)$. For $k(n)=\min\{k
|(p_0+\theta)\cddots(p_k+\theta)n\leq N_0 \}$, and this shows
\[
\E a_0(Y_n) \leq d_0\cddots
d_{k(n)}\bigl(\E a_{k^\th(n)+1}\bigl(Y^{(k^\th
(n)+1)}_{N_0}
\bigr)+1\bigr) \leq n^{\b^\th(n)}(N_0+1)
\]
by the trivial estimation $a_k(Y_{N_0}) \leq N_0$ for any $k$ and the
definition of $\b^\th(n)$.

Similarly for the lower bound,
\begin{eqnarray*}
\E a_0(Y_n) &\geq& d_0 \E a_1
\bigl(Y'_{(p_0-\th)n}\bigr)-1
\\
&\geq& d_0\cddots d_k \E a_{k+1}
\bigl(Y^{(k+1)}_{(p_0-\th)\cddots(p_k-\th
)n}\bigr) -(d_0\cddots
d_{k-1}+\cdots+1),
\end{eqnarray*}
so that for $k=k^{-\th}(n)=\min\{k|(p_0-\th)\cddots(p_k-\th)n\leq
N_0\}$, one has
\[
\E a_0(Y_n) \geq\tfrac{1}{2} d_0
\cddots d_{k^{-\th}(n)}=\tfrac{1}{2} n^{\b^{-\th}(n)}
\]
by the trivial estimate $\E a_k(Y^{(k)}_N) \geq\frac{3}{2}$ for any $k$
and $N\geq3$ [indeed, $a_k(Y^{(k)}_N) \geq2$ as soon as there are two
distinct elements among the triplet $\{\s_1^{-1}(1),
\s_2^{-1}(1),\allowbreak\s_3^{-1}(1) \}$, which happens with
probability $\geq \frac{3}{4}$ for any value of $d_k$].
\end{pf*}

\section{Entropy exponents}\label{smte}

\subsection{Main theorem} Given a valency sequence $\bbar d=(d_i)_i$
and $p_i=\frac{d_i-1}{d_i^2}$, recall that the exponent sequence $\b_{\bar d}(n)$ is defined by
\[
\b_{\bar d}(n)=\b(n)=\frac{\log(d_0\cddots d_{k(n)})}{\log n}
\qquad\mbox{where }k(n)=\min
\{k|p_0\cddots p_k n \leq1\}.
\]

\begin{theorem}\label{MTE}
Let $\G=\G(S_{d_0},\HF)$ be a finitely generated saturated directed
subgroup of $\operatorname{Aut}(ET_{\bar d})$, $\m$ the measure equidistributed on
$S_{d_0}\HF S_{d_0}$ and $\b(n)$ the exponent sequence of $\bbar d$; then
for any $\varepsilon>0$ and $n$ large enough,
\[
\biggl\llvert \frac{\log H_{\G,\m}(n)}{\log n} -\b(n) \biggr\rrvert \leq \varepsilon.
\]
In particular, $\overline{h}(\G,\m)=\limsup\b(n)$ and $\underline
{h}(\G,\m)=\liminf\b(n)$.
\end{theorem}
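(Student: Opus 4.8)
The plan is to sandwich the entropy $H_{\G,\m}(n)$ between two quantities governed by the expected activity $\E a(Y_n)$, for which Lemma~\ref{expectact} already gives the desired logarithmic asymptotics. Since $\left|\frac{\log \E a(Y_n)}{\log n} - \b(n)\right| \leq \e$ holds for $n$ large, it suffices to prove that $H_{\G,\m}(n)$ and $\E a(Y_n)$ are logarithmically equivalent, i.e. that there exist constants (independent of $n$, or at worst of the form $n^{o(1)}$) with
$$\frac{1}{C}\log \E a(Y_n) \leq \log H_{\G,\m}(n) \leq C \log \E a(Y_n)$$
for $n$ large. In fact I expect a cleaner two-sided statement: $\log H_{\G,\m}(n) \asymp \log \E a(Y_n)$ up to additive $O(\log\log n)$ and multiplicative constants depending only on $D$ and $\#HF$.

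\emph{Upper bound.} Here I would combine the combinatorial counting of Proposition~\ref{counta} with a large-deviation control on the activity of the random walk. By Proposition~\ref{counta}, the number of elements $\g \in \G$ admissible by some word of activity $\leq r$ is at most $C^r$; hence $H(Y_n \mid a(Y_n) \leq r) \leq r \log C$. Writing $H_{\G,\m}(n) = H(Y_n)$ and splitting according to whether $a(Y_n)$ exceeds $r_n := \E a(Y_n) \cdot (\log n)$ (say), one gets $H_{\G,\m}(n) \leq r_n \log C + H(Y_n \mid a(Y_n) > r_n)\,P(a(Y_n) > r_n) + H(\text{indicator})$. The first term is $O(\E a(Y_n) \log n)$, which is what we want logarithmically; the tail term must be shown negligible, using Markov's inequality $P(a(Y_n) > r_n) \leq 1/\log n$ together with the crude bound $a(Y_n) \leq n$ and $H(Y_n) \leq n \log\#(\text{gen.\ set})$, so that $H(Y_n \mid a(Y_n)>r_n)\,P(\ldots) \leq (\text{const})\cdot n / \log n$. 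Wait --- that is not small compared to $\E a(Y_n) \log n$ when $\b(n)$ is close to $1$; so instead I would use the stronger large-deviation estimate that $a(Y_n) \leq C'\,\E a(Y_n)\,(\log n)$ with probability $\geq 1 - e^{-c\log^2 n}$ or similar, obtained by iterating the Bernoulli large-deviation bound of Lemma~\ref{randomRP}(2) down the $k(n) \sim \log n$ levels of the minimal tree; then the tail contributes only $e^{-c\log^2 n}\cdot n\log C \to 0$. This step --- a clean concentration estimate for $a(Y_n)$ around its mean --- is the main technical obstacle, and I expect it to be the heart of the proof.

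\emph{Lower bound.} For this I would exhibit, as a function of $Y_n$, a random variable taking roughly $\exp(c\,\E a(Y_n))$ values with each value reasonably likely, so that $H(Y_n)$ is at least its entropy. The natural candidate is the boundary configuration $\f \colon \partial T_{\bar d} \to S_d$ of the automorphism $\g =_\G Y_n = \f g$: by Remark~\ref{remarkphi}, for a fixed minimal tree and active set $A_\partial(w)$ the configuration $\f$ supported on $A_\partial(w)$ can be prescribed freely (each of the $a(w) = \#A_\partial(w)$ active points carries an independent factor which, by saturation, is equidistributed over $F$ conditionally on the tree data). Thus, conditioned on the event $\{a(Y_n) \geq r\}$ --- which by the lower bound of Lemma~\ref{expectact} and a second-moment or large-deviation argument has probability bounded below for $r = c\,\E a(Y_n)$ --- the map $Y_n \mapsto \f$ has image of entropy $\geq r \log \#F$, with values spread out; hence $H_{\G,\m}(n) \geq H(\f) \geq c'\,\E a(Y_n)$. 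Taking logarithms, $\log H_{\G,\m}(n) \geq \log \E a(Y_n) - O(1)$. Finally, dividing through by $\log n$ and invoking Lemma~\ref{expectact} gives $\left|\frac{\log H_{\G,\m}(n)}{\log n} - \b(n)\right| \leq \e$ for $n$ large; since $\b$ is bounded away from $0$ and $\infty$ (Fact~\ref{bornesup}), the $\limsup$ and $\liminf$ statements follow immediately by taking $\e \to 0$ along a subsequence realising each of the two limits.
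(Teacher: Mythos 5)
Your overall skeleton (sandwich $H_{\G,\m}(n)$ by the expected activity and invoke Lemma \ref{expectact}) is reasonable, but both halves rest on probabilistic estimates for $a(Y_n)$ that you do not prove, and the one you yourself flag as ``the main technical obstacle'' is genuinely the missing piece. For the upper bound, everything hinges on an upper-tail concentration of the form $P\bigl(a(Y_n)>C'\,\E a(Y_n)\log n\bigr)\leq e^{-c\log^2 n}$; Markov is insufficient, as you note, and the route you indicate (iterating the Bernoulli large-deviation bound of Lemma \ref{randomRP}(2) down all $k(n)\sim\log n$ levels of the minimal tree) does not work as stated: the bound $P\bigl(m_{vt}>(p_{|v|}+\th)m_v\mid m_v\bigr)\leq c_\th^{m_v}$ is only small while $m_v$ is large, whereas near level $k(n)$ the typical lengths $m_v$ are $O(1)$ and there are $d_0\cdots d_{k(n)}=n^{\b(n)}$ vertices, so a union bound gives nothing. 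One can repair this by stopping the iteration at the last level where lengths are still $\geq\log^2 n$, using the trivial bound $a(w^v)\leq m_v+1$ below that level (the resulting polylog loss is harmless after dividing by $\log n$), but that argument is absent from your proposal. The paper avoids activity concentration altogether: it runs the recursion on entropy itself, $H(Y_n)\leq H(Y_n^1)+\dots+H(Y_n^{d_0})+H(\t_n)$, conditions on the lengths $m_t$, and uses only the length large deviations of Lemma \ref{randomRP}(2) together with monotonicity of $H(Y'_m)$ in $m$, iterated down to level $k^\th(n)$; no tail bound on $a(Y_n)$ is ever needed for the entropy upper bound (a lower-tail bound appears only later, for the return probability).

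For the lower bound you again invoke an unproved estimate, namely that $P\bigl(a(Y_n)\geq c\,\E a(Y_n)\bigr)$ is bounded below ``by a second-moment or large-deviation argument''; no second-moment control of the activity is available among the stated lemmas, and none is supplied. This detour is also unnecessary: since, conditionally on the $G(S,H)$-part $(s_i,h_i)_i$ of the increments, the configuration $\f_n$ is uniform over the $(\#F)^{a(Y_n)}$ functions supported in $A_\partial(Y_n)$ (remark \ref{remarkphi} plus saturation), one gets directly $H(Y_n)\geq H(\f_n)\geq H\bigl(\f_n\mid (s_i,h_i)_i\bigr)=\log(\#F)\,\E a(Y_n)$, and Lemma \ref{expectact} finishes; the paper's version of the same idea is the pointwise bound $P(Y_n=\g)\leq(\#F)^{-\# supp(\f)}$ of fact \ref{supp}, giving $H(Y_n)\geq\log(\#F)\,\E\,\# supp(\f_n)\geq c\,(\E a(Y_n)-A)$. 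So your lower-bound idea is essentially the paper's but its execution has an avoidable gap, and your upper bound takes a different route whose key concentration step is neither proved nor obtainable by the argument you sketch without modification.
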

Fact \ref{bornesup} ensures that if $d \leq d_i \leq D$ for all $i$, then
\[
\tfrac{1}{2} \leq\b_d \leq\underline{h}(\G,\m) \leq\overline
{h}(\G,\m) \leq\b_D<1.
\]
\begin{pf*}{Proof of Theorem \ref{MTE}}
First prove the upper bound, which is a straightforward generalization
of Proposition 4.11 in \cite{BKN}. Note the similarity with the upper
bound in Lemma \ref{expectact}.\vadjust{\goodbreak} Indeed, under rewriting process
$Y_n=(Y_n^1,\ldots,Y_n^{d_0})\t_n$ one has
\[
H(Y_n) \leq H\bigl(Y_n^1\bigr)+\cdots+H
\bigl(Y_n^{d_0}\bigr)+H(\t_n),
\]
where $\t_n$ is equidistributed on $S_{d_0}$ so $H(\t_n)\leq C$ for a
constant $C$ depending only on $D$.

By Lemma \ref{randomRP}, the law of $Y_n^t|m_t$ under the length
condition $m_t$ is the law of a random alternate product $Y_{m_t}'$ in
the group $\G(S_{d_1},H_1F)$, and the length $m_t$ has the binomial
law of $\sum_{i=1}^n u_i$ for independent $u_i$ in $\{0,1\}$ with
$P(u_i=1)=p_0$, of entropy bounded by $C \log n$ for $C$ depending only
on $D$. This ensures (Lemma A.4 in \cite{BKN})
\begin{eqnarray*}
H\bigl(Y_n^t\bigr) &\leq& \sum
_{m=0}^n H\bigl(Y_m'
\bigr)P(m_t=m) + H(m_t)
\\
&\leq& H\bigl(Y'_{(p_0+\th)n}\bigr)+nP\bigl(m_t
\geq(p_0+\th)n\bigr)+C \log n
\end{eqnarray*}
by splitting the sum at $m=(p_0+\th)n$ for an arbitrary $\th>0$. By
the large deviation principle, there exists $c_\th<1$ such that $P(m_t
\geq(p_0+\th)n) \leq c_\th^n$. Thus there is $N_0$ depending only on
$\th$ and $D$ such that for $n \geq N_0$, one has [for a slightly
larger constant $C$ since $nc_\th^n=o(\log n)$]
\[
H(Y_n) \leq d_0 H\bigl(Y'_{(p_0+\th)n}
\bigr)+C \log n.
\]
As for expected activity, this allows us to integrate the supremum
$H(n)=\sup_{k\geq0} \{H(Y_n^{(k)})\}\leq Cn$, where $Y_n^{(k)}$ is a
random alternate product in the group $\G(S_{d_k},H_kF)$, and $C$ is a
uniform constant depending only on the sizes of the generating sets
$S_{d_k}\sqcup H_kF$, hence only on $D$ and $\#\HF$, into
\[
H(n) \leq d_0\cddots d_k H\bigl((p_0+\th)
\cddots(p_k+\th)n\bigr)+(d_0\cddots d_{k-1}+
\cdots+1)C \log n
\]
as long as $k \leq k^{\th}(n)$, that is, when $(p_0+\th)\cddots
(p_{k-1}+\th)n \geq N_0$; see Section \ref{defbeta} for the
definition of $k^\th(n)$ and $\b^\th(n)$. Thus
\[
H(Y_n) \leq H(n) \leq d_0\cddots d_{k^\th(n)}(CN_0+C
\log n)=n^{\b^\th
(n)}(CN_0+C \log n)
\]
and $H(Y_n) \leq n^{\b(n)+2\varepsilon(\th)}$ for $n$ large enough,
by Proposition \ref{thetapprox}.

To prove the lower bound, the following fact is useful:
%
\begin{fact}\label{supp}
For $\gamma=\varphi g$ in $\G(S_{d_0},\HF)=F \wr_{\partial T}
G(S_{d_0},H)$, one has
\[
P(Y_n=\gamma) \leq \biggl(\frac{1}{\#F} \biggr)^{\# \operatorname{supp}(\varphi)}.
\]
\end{fact}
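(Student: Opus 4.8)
The plan is to generate the random walk in two independent stages and condition on the first. Write each factor $k_j$ of $Y_n=s_1k_1\cdots k_ns_{n+1}$ as $k_j=\f_{f_j}h_j$ with $h_j\in H$ and $f_j\in F$; since $h\f_f=\f_fh$ one has $HF\simeq H\times F$, so equidistribution of $k_j$ on $HF$ makes $(h_j,f_j)$ equidistributed on $H\times F$, and one may draw first the $s_i$'s and $h_j$'s (independent, equidistributed) and then the $f_j$'s (independent, equidistributed on $F$, independent of the first stage). The first stage already determines the projection $\bar Y_n=s_1h_1\cdots h_ns_{n+1}$ of $Y_n$ to $G(S_{d_0},H)$, hence — by Remark \ref{remarkphi} — the minimal tree, the active boundary set $A_\partial(\bar Y_n)$, and the ascendance forest. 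The structural input I would invoke is the Fact of Section \ref{ascfor}: the components $(\t_v)_{v\in A(\bar Y_n)}$ of the ascendance forest are indexed by the active leaves and partition the factor set $\{1,\dots,n\}$ into pairwise disjoint blocks $B_v=\{j:k_j\in\partial\t_v\}$ that are non-empty (each component contains at least the vertex $k^v$, hence at least one leaf, and the leaves of a component are precisely its original factors $k_j$); moreover the $F$-part of $Y_n$ is the function $\f$ supported on $A_\partial(\bar Y_n)$ with value $\f(x_v)=\prod_{j\in B_v}f_j$ (ordered product) at the active boundary point $x_v$ attached to $v$, and $v\mapsto x_v$ is a bijection from $A(\bar Y_n)$ onto $A_\partial(\bar Y_n)$.

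Then I would fix $\g=\f g$ and condition on the first stage. If $\bar Y_n\neq g$, or if $\bar Y_n=g$ but $supp(\f)\not\subseteq A_\partial(\bar Y_n)$, the conditional probability of $\{Y_n=\g\}$ is $0$; in the remaining case $\{Y_n=\g\}$ entails $\prod_{j\in B_v}f_j=\f(x_v)$ for every active leaf $v$ with $x_v\in supp(\f)$, and there are exactly $\# supp(\f)$ such constraints, living over pairwise disjoint non-empty finite index sets. Since the $f_j$ are independent and equidistributed on $F$, each such constraint has probability exactly $\tfrac1{\#F}$ (fix all factors of the block but the last, which is then prescribed), and the constraints are mutually independent because their index sets are disjoint. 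Hence in every case the conditional probability is at most $(\tfrac1{\#F})^{\# supp(\f)}$, and averaging over the first stage (whose probabilities sum to $1$) gives $P(Y_n=\g)\le(\tfrac1{\#F})^{\# supp(\f)}$.

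The argument is short once the ascendance forest is available, and I do not expect a genuine obstacle; the only points that need care — and where a careless version would fail — are that the blocks $B_v$ are \emph{non-empty} (otherwise a constraint $\prod_{j\in B_v}f_j=\f(x_v)$ could be automatic and cost nothing) and \emph{pairwise disjoint} (otherwise the $\# supp(\f)$ events would not be independent), together with the bijectivity of $v\mapsto x_v$ (so that the number of nontrivial constraints is exactly $\# supp(\f)$ and not fewer). All three are immediate from the description of $AF(w)$ in Section \ref{ascfor} as a disjoint union of trees whose leaves are precisely the original factors $k_1,\dots,k_n$, so the whole proof reduces to assembling these ingredients with an elementary conditioning computation.
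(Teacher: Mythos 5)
Your proof is correct and follows essentially the same route as the paper: the paper also conditions on the $(S,H)$-part of the walk (phrased as conditioning on the activity $a(Y_n)$) and invokes Remark \ref{remarkphi} to get that, given the active set, the boundary function $\f_n$ is uniform over functions supported there, which yields $P(\f_n=\f\mid a(Y_n)=a)=(1/\#F)^a\le(1/\#F)^{\#supp(\f)}$. Your only deviation is that you re-derive this uniformity/independence directly from the disjointness and non-emptiness of the blocks of the ascendance forest rather than citing the remark; that is the same structural input, just unpacked.
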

\begin{pf}
Denote $Y_n=_\G\varphi_n g_n$, and remark that $\operatorname{supp} (\varphi_n)
\subset A_\partial(Y_n)$ of size $a(Y_n)$. Also recall Remark \ref
{remarkphi} that $A_\partial(Y_n)$ depends only on $g_n=\break s_1h_1\cddots
h_ns_{n+1}$, and Fact \ref{cond} that given $A_\partial(g_n)$, any
function $\varphi\dvtx\partial T \rightarrow F$ with support included in
$A_\partial(g_n)$ appears with probability $ (\frac{1}{\#F}
)^{\#A_\partial(g_n)}$.\vadjust{\goodbreak} This allows us to compute by
conditioning on activity.
\begin{eqnarray*}
P(Y_n=\gamma) &=& \sum_{a=1}^n
P\bigl(Y_n=\gamma|a(Y_n)=a\bigr)P\bigl(a(Y_n)=a
\bigr)
\\
&\leq& \sum_{a \geq\#\operatorname{supp}(\varphi)} P\bigl(\varphi_n=
\varphi |a(Y_n)=a\bigr)P\bigl(a(Y_n)=a\bigr)
\\
&\leq& \sum_{a \geq\#\operatorname{supp}(\varphi)} \biggl(\frac{1}{\#F}
\biggr)^a P\bigl(a(Y_n)=a\bigr) \leq \biggl(
\frac{1}{\#F} \biggr)^{\# \operatorname{supp}(\varphi)}.
\end{eqnarray*}
\upqed
\end{pf}

This fact guarantees
\begin{eqnarray*}
H(Y_n)&=&-\sum_{\gamma\in\G} P(Y_n=
\gamma)\log P(Y_n=\gamma)
\\
&\geq& C \sum_{\gamma\in\G} P(Y_n=\gamma)
\#\operatorname{supp}(\varphi)
= C\E_{\m^{\ast n}}\#\operatorname{supp} (\varphi)=C\E\#\operatorname{supp}(\varphi_n),
\end{eqnarray*}
and the expected value of the size of the support of $\varphi_n$
relates to activity.

More precisely by Fact \ref{cond}, given $A_\partial(g_n)$, the
function $\varphi_n\dvtx A_\partial(g_n) \rightarrow F$ is random, so that
\[
\E\bigl[\#\operatorname{supp}(\varphi_n)|A_\partial(g_n)\bigr]=
\frac{\#F-1}{\#F}\#A_\partial (g_n).
\]
This allows us, once again, to compute by conditioning on activity.
\begin{eqnarray*}
\E\#\operatorname{supp}(\varphi_n) &=& \sum_{a=1}^n
\E\bigl[\#\operatorname{supp}(\varphi_n)|\# A(Y_n)=a\bigr]P\bigl[
\#A(Y_n)=a\bigr]
\\
&=& \sum_{a=1}^n \frac{\#F-1}{\#F}aP
\bigl[\#A(Y_n)=a\bigr]= \frac{\#F-1}{\#F} \E a(Y_n).
\end{eqnarray*}

By Lemma \ref{expectact}, we conclude that
\[
H(Y_n) \geq C\frac{\#F-1}{\#F}\E a(Y_n) \geq C
\frac{\#F-1}{\#F} n^{\b
(n)-\varepsilon}.
\]
\upqed
\end{pf*}


%

Note that the proof for the upper bound remains valid for the group
$G(S,H) <\operatorname{Aut}(T_{\bar d})$, that is, when the group $F$ is trivial, but
the lower bound is true only with a nontrivial finite group $F$
(otherwise Fact \ref{supp} is obviously empty).
%
\begin{remark}\label{IT}
In information theory, the entropy is the ``average number of digits''
needed to describe some data. In this heuristic point of view, Theorem
\ref{MTE} is a corollary of Lemma \ref{expectact}.

Indeed, the activity $a(w)$ of a word $w$ is equivalent to the size of
the minimal tree $T(w)$ defined in Section \ref{mta}; recall $a(w)
\leq\#\partial T(w) \leq D a(w)$. Moreover, the element $\gamma=_\G
w$ in $\G$ is described by Figure \ref{fig1}, with $\s_u$ for $u$
nonactive vertices, $w_v$ for $v$ active vertices and $\varphi(x)$ at
active boundary points $x$. As each of them is described by $\leq C$
digits for some $C$ depending uniquely on the bound $D$ on valency, the
element $\gamma$ is described with $\leq C \#T(w) \approx a(w)$
digits. Moreover, since any function $\varphi\dvtx A_\partial(w)
\rightarrow F$ appears equally likely, one needs at least $a(w)$ digits
to describe $\gamma$.

Figure \ref{fig1} is, in this sense, the ``best description'' of the
element $\gamma$ in $\G$ (position in the Cayley graph) represented
by the word $w$ (path in the Cayley graph). The loss of information
from $w$ to $\gamma$ is somewhat described by the graph structure of
the ascendance forest $\AF(w)$ of Section \ref{ascfor}.
\end{remark}

\subsection{Precise entropy exponent and oscillation phenomena}
Theorem \ref{MTE} exhibits a large variety of behaviors for entropy of
random walk. In particular, it implies Theorem \ref{MT} for $\frac
{1}{2} \leq\a\leq\b<1$.
%
\begin{corollary}\label{exactentropy}
For any $\frac{1}{2} \leq\b<1$, there is a valency sequence $\bbar d$
such that the entropy exponent of the random walk $Y_n$ on a finitely
generated saturated directed group $\G(S_{d_0},\HF)<\operatorname{Aut}(ET_{\bar d})$ is
\[
h(\G,\m)=\b.
\]
\end{corollary}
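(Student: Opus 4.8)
The plan is to realize the target exponent $\b$ by choosing the valency sequence $\bar d$ so that the exponent function $\b_{\bar d}(n)$ converges to $\b$, and then apply Theorem \ref{MTE}. The first step is to produce, for any prescribed $\frac{1}{2} \le \b < 1$, a constant-valency sequence or a two-valued sequence $\bar d$ in $\{d, D\}^{\N}$ such that $\lim_{n \to \infty} \b_{\bar d}(n) = \b$. By Fact \ref{bornesup}, a constant sequence $d_i \equiv e$ gives $\b_{\bar d}(n) = \b_e + O(1/\log n)$, where $\b_e = \frac{1}{2 - \log(e-1)/\log e}$; the map $e \mapsto \b_e$ is increasing, takes the value $\frac12$ at $e=2$, and tends to $1$, but it only hits a discrete set of values. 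So for a general real $\b \in [\frac12, 1)$ one must interpolate: choose $d = d(\b)$ and $D = D(\b)$ with $\b_d < \b < \b_D$ and build a sequence mixing these two valencies in the right asymptotic proportion.

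The cleanest way to carry out the interpolation is to invoke Fact \ref{classfunction}: pick the non-decreasing function $g(x) = x^{\b}$ and check that it satisfies the two inequalities $d\, g(x) \le g\!\left(\frac{d^2}{d-1}x\right)$ and $g\!\left(\frac{D^2}{D-1}x\right) \le D\, g(x)$, which amount to $\b \ge \b_d$ and $\b \le \b_D$ respectively (recall $\b_d = \frac{\log d}{\log(d^2/(d-1))}$, so $d = (d^2/(d-1))^{\b_d}$, and the inequality $d^{\b} \le (d^2/(d-1))^{\b \cdot (\text{something})}$ rearranges to $\b_d \le \b$; similarly on the other side). Concretely, fix $d$ large enough that $\b_d < \b$ — possible since $\b_d \to 1$? no, wait: $\b_d$ is increasing in $d$, so one wants $d$ \emph{small}, namely $d = 2$ always works since $\b_2 = \frac12 \le \b$ — and fix $D$ large enough that $\b_D > \b$, possible since $\b_D \to 1$. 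Then Fact \ref{classfunction} supplies a sequence $\bar d \in \{2, D\}^{\N}$ and a constant $C$ with $\frac1C x^{\b} \le h_{\bar d}(x) \le C x^{\b}$. Since $n^{\b_{\bar d}(n)} = h_{\bar d}(n)$ by the remark preceding Fact \ref{classfunction}, taking $\log_n$ gives $\left|\b_{\bar d}(n) - \b\right| \le \frac{\log C}{\log n} \to 0$, so in particular $\liminf \b_{\bar d}(n) = \limsup \b_{\bar d}(n) = \b$.

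To finish, take any finitely generated saturated directed group $\G = \G(S_{d_0}, HF) < Aut(ET_{\bar d})$ for this $\bar d$ — for instance the minimal one coming from embedding a fixed direct product of symmetric groups diagonally, as described after the definition of directed groups, together with any non-trivial finite group $F$ — and let $\m$ be equidistributed on $S_{d_0}HFS_{d_0}$. Theorem \ref{MTE} gives $\left|\frac{\log H_{\G,\m}(n)}{\log n} - \b_{\bar d}(n)\right| \le \e$ for all large $n$, and combining with $\b_{\bar d}(n) \to \b$ yields $\frac{\log H_{\G,\m}(n)}{\log n} \to \b$, i.e.\ $h(\G,\m) = \overline{h}(\G,\m) = \underline{h}(\G,\m) = \b$. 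The boundary value $\b = \frac12$ is covered directly by the constant sequence $d_i \equiv 2$ (binary tree), where Fact \ref{bornesup} already gives $\b_{\bar d}(n) = \frac12 + O(1/\log n)$.

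The main obstacle is purely the interpolation step: verifying that $g(x) = x^{\b}$ genuinely satisfies the hypotheses of Fact \ref{classfunction} for suitable $d, D$, which is the elementary but slightly fiddly inequality bookkeeping relating $\b$ to $\b_d$ and $\b_D$. Everything after that is a direct citation of Theorem \ref{MTE} and Fact \ref{classfunction}; no new probabilistic or group-theoretic input is needed.
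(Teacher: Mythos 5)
Your proposal is correct, and its overall architecture is the same as the paper's: build a two-valued valency sequence $\bar d\in\{d,D\}^{\N}$ whose exponent function $\b_{\bar d}(n)$ converges to $\b$, then quote Theorem \ref{MTE}. The one place where you diverge is the interpolation step. The paper does it by hand: it solves $\b=\frac{\log(d^{\l}D^{1-\l})}{\log(p^{\l}P^{1-\l})}$ for a density $\l\in[0,1]$ and prescribes $\#\{i\le n\mid d_i=d\}=[\l n]$, asserting directly that $\b_{\bar d}(n)\to\b$. You instead feed $g(x)=x^{\b}$ into Fact \ref{classfunction}; the two hypotheses of that fact reduce exactly to $\b_d\le\b\le\b_D$ (your computation $d\le(d^2/(d-1))^{\b}\Leftrightarrow\b\ge\b_d$ is right, and $d=2$, $D$ large suffices), and the conclusion $\frac1C x^{\b}\le h_{\bar d}(x)\le Cx^{\b}$ combined with $n^{\b_{\bar d}(n)}=h_{\bar d}(n)$ gives $|\b_{\bar d}(n)-\b|\le\frac{\log C}{\log n}$. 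This is a legitimate and arguably cleaner route: it outsources the greedy construction and its verification to a fact the paper has already proved, and it even yields the quantitative rate $O(1/\log n)$ of convergence for free, whereas the paper's density argument leaves the convergence claim to the reader. The final step (choosing a concrete finitely generated saturated group for the bounded sequence $\bar d$, taking $F$ non-trivial, and applying Theorem \ref{MTE}) matches the paper. No gap.
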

\begin{pf}
Take $d \leq D$ such that $\b_2 \leq\b_d= \frac{\log d}{ \log p}
\leq\b\leq\frac{\log D}{ \log P}=\b_D<1$. There exist $\l\in
[0,1]$ such that $\b=\frac{\log(d^\l D^{1-\l})}{\log(p^\l P^{1-\l
})}$. Define the sequence $\bbar d$ by $d_i \in\{d,D\}$ for all $i$ and
$\#\{i \leq n | d_i=d\}=[\l n]$. Then the exponent sequence $\b_{\bar
d}(n) \rightarrow\b$ and the corollary follows from Theorem \ref{MTE}.
\end{pf}
%
\begin{remark}\label{entgro}
This corollary shows in particular that any saturated directed group of
a binary tree has entropy exponent $h(\G,\m)=\frac{1}{2}$. This is
the case of the groups $\G_\o=F \wr_{\partial T_2} G_\o$ for $G_\o
$ an Aleshin--Grigorchuk group, for which a great variety of growth
behaviors are known. For instance, $\G_{(012)^\infty}$ has growth
function $b_{(012)^\infty}(r) \approx e^{r^{\a_0}}$ for an explicit
$\a_0<1$ \cite{BE}, and for any $\a\in[\a_0,1]$ there is a group
$G_{\o(\a)}$ with growth such that $\lim\frac{\log\log b_{\o(\a
)}(r)}{\log r}=\a$ \cite{Bri3}. Considering the entropy, all these
growth behaviors collapse to a unique entropy exponent.
\end{remark}
%
\begin{corollary}\label{oscil}
For any $\frac{1}{2} \leq\a< \b<1$, there is a valency sequence
$\bbar d$ such that the lower and upper entropy exponents of the random
walk $Y_n$ on a finitely generated saturated directed group $\G
(S_{d_0},\HF)<\operatorname{Aut}(ET_{\bar d})$ are
\[
\underline{h}(\G,\m)=\a\quad\mbox{and}\quad\overline{h}(\G,\m)=\b.
\]
\end{corollary}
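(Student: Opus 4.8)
The plan is to exploit the flexibility of Fact~\ref{classfunction} together with Theorem~\ref{MTE}: since $\underline h(\G,\m)=\liminf \b_{\bar d}(n)$ and $\overline h(\G,\m)=\limsup \b_{\bar d}(n)$, it suffices to construct a valency sequence $\bar d \in \{d,D\}^{\N}$ whose exponent function $\b_{\bar d}(n)$ oscillates, having $\liminf=\a$ and $\limsup=\b$. First I would choose integers $d \le D$ so that $\b_2 \le \b_d \le \a$ and $\b \le \b_D < 1$, which is possible by Fact~\ref{bornesup} since $\b_d \to 1$ as $d \to \infty$; by continuity of $\l \mapsto \frac{\log(d^\l D^{1-\l})}{\log(p^\l P^{1-\l})}$ there are $\l_\a, \l_\b \in [0,1]$ realising the exponents $\a$ and $\b$ as in the proof of Corollary~\ref{exactentropy}.

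Next I would build $\bar d$ in long alternating blocks: on a very long initial stretch of levels use proportions of $d$'s and $D$'s approximating the mixture $\l_\a$, so that by the computation in Fact~\ref{bornesup} (ratio-of-averages estimate) $\b_{\bar d}(n)$ settles near $\a$; then switch to a long stretch with proportion $\l_\b$, driving $\b_{\bar d}(n)$ near $\b$; then back to $\l_\a$, and so on, each block being vastly longer than the cumulative length of all previous blocks. Because $k(n) \asymp \frac{\log n}{|\log p|}$ grows only logarithmically, while the block boundaries grow super-exponentially, by the time $n$ reaches the end of a long block the quantity $\b_{\bar d}(n)=\frac{\log(d_0\cdots d_{k(n)})}{\log n}$ is dominated by the $d_i$ in that block's regime, so $\liminf \b_{\bar d}(n)=\a$ and $\limsup \b_{\bar d}(n)=\b$. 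The key computation to carry out carefully is: within a block where all $d_i$ equal $d$ (resp.\ $D$) except for a fixed proportion, the ratio $\frac{\sum \log(\frac{d_i}{d_i-1})}{\sum \log d_i}$ converges to the mixture value, and the correction terms from partial blocks are $O(1/\log n)$, vanishing in the limit; Theorem~\ref{MTE} then converts this into the statement about $\underline h$ and $\overline h$, losing only an $\e$ that can be taken to $0$.

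The main obstacle I anticipate is controlling the transition windows: when $n$ lies in a partial block, $\b_{\bar d}(n)$ is a weighted average of the "previous" regime value and the "current" regime value, so I must ensure the sequence actually attains values arbitrarily close to both $\a$ and $\b$ (not merely all intermediate values) and never exceeds $[\a,\b]$ in the limit. This is handled by the super-exponential growth of block lengths: by the end of a long block of type $\b$, the fraction of levels $k(n)$ belonging to earlier blocks is $o(1)$, so the average is as close to $\b$ as desired; and symmetrically for $\a$. One should also double-check that $\b_{\bar d}$ genuinely dips back down, i.e.\ that a block of type $\a$ following a block of type $\b$ pulls the average down to near $\a$ — again immediate once the type-$\a$ block is long enough to dominate the logarithmic range $k(n)$. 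Finally, a saturated directed group $\G(S_{d_0},HF)<\mathrm{Aut}(ET_{\bar d})$ with nontrivial finite $F$ exists for this $\bar d$ by the construction in Section~\ref{involved} (e.g.\ the minimal finitely generated saturated directed group on the bounded sequence $\bar d$, extended by any nontrivial finite $F$), so Theorem~\ref{MTE} applies and yields $\underline h(\G,\m)=\a$, $\overline h(\G,\m)=\b$.
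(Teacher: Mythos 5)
Your proposal is correct and reduces the corollary to Theorem~\ref{MTE} exactly as the paper does, but the construction of the oscillating valency sequence is genuinely different. The paper alternates \emph{homogeneous} stretches (all $d_i=D$, then all $d_i=d$) whose lengths $l,k$ are computed so that the cumulative exponent lands precisely on $\b$ at the end of a $D$-stretch and on $\a$ at the end of a $d$-stretch; during a pure stretch the cumulative ratio moves monotonically toward $\b_D$ (resp.\ $\b_d$) and is cut off at $\b$ (resp.\ $\a$), so staying inside $[\a-\frac{C}{\log n},\b+\frac{C}{\log n}]$ is automatic. Since the stretch lengths are $\Theta(\log n)$, i.e.\ a fixed fraction of $k(n)$, the paper's oscillation is as fast as possible ($m\leq n^{\n}$), which is what feeds into Theorem~\ref{freq}; your blocks, each dominating the sum of all previous ones, give the same $\liminf$ and $\limsup$ but with an extremely slow, uncontrolled oscillation, so your argument would not yield the pseudo-period bounds. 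In exchange, your route piggybacks directly on the mixture computation of Corollary~\ref{exactentropy} and is conceptually simpler. The one point you must nail down is the arrangement \emph{inside} each mixed block: it is not enough that the total proportion of $d$'s in a type-$\a$ block equal $\l_\a$; every prefix of the block must have proportion close to $\l_\a$ (e.g.\ impose $\#\{i\leq m \mid d_i=d\}=[\l_\a m]$ level by level within the block, as in Corollary~\ref{exactentropy}). Otherwise a type-$\a$ block that front-loads its $D$'s would, because it dominates everything before it, drive $\b_{\bar d}(n)$ up toward $\b_D>\b$ partway through, ruining the $\limsup$. With an even interleaving the partial-block ratio is always within $O(1/\log n)$ of the block's regime value, the cumulative ratio is a mediant of quantities in $\{\approx\a,\approx\b\}$, and your conclusion follows.
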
\eject
\begin{pf}
By Theorem \ref{MTE}, it is sufficient to construct an appropriate
exponent sequence in order to prove the corollary. Take $d \leq D$ such
that $\b_d \leq\a< \b\leq\b_D$. Construct a sequence $\bbar d$
such that $d_i \in\{d,D\}$ for all $i$ according to the following rules.

Recall the definition (see Section \ref{defbeta}) of $\b(n)$ as the
exponent satisfying $n^{\b(n)}=d_0\cddots d_{k(n)}$ where $k(n)$ as the
unique integer such that $\frac{p_{k(n)}}{n}<p_0 \cddots p_{k(n)} \leq
\frac{1}{n}$. In order to ease the reading of the present proof, use
the shortcut notation $p_0 \cddots p_{k(n)} \approx\frac{1}{n}$. There
exists a constant $C$ depending\break uniquely on $D$ such that the following
statements are true if all relations $x \approx y$ below are replaced
by $\frac{y}{C} \leq x \leq Cy$.

Suppose $k(n)$ is such that $p_0 \cddots p_{k(n)} \approx\frac{1}{n}$
and $d_0\cddots d_{k(n)} \approx n^\a$ for some $n$ [hence $|\b(n)-\a
|\leq\frac{C}{\log n}$], and then set $d_{k(n)+1}=\cddots
=d_{k(n)+l}=D$ for $l$ such that for some minimal integer $m$,
\begin{eqnarray*}
\frac{P^l}{n} &\approx& p_0\cddots p_{k(n)}P^{l}=
p_0\cddots p_{k(n)+l} \approx\frac{1}{m},
\\
n^\a D^l &\approx& d_0\cddots
d_{k(n)}D^{l}= d_0\cddots d_{k(n)+l}
\approx m^\b.
\end{eqnarray*}
This forces $n^\a D^l\approx ( \frac{n}{P^l}  )^\b$,
hence $l=\frac{\b-\a}{\log(P^\b D)} \log n+o(\log n)$.

Suppose $l(m)$ is such that $p_0\cddots p_{l(m)}\approx\frac{1}{m}$ and
$d_0\cddots d_{l(m)} \approx m^\b$ for some $m$ [hence $|\b(m)-\b|\leq
\frac{C}{\log m}$], then set $d_{l(m)+1}=\cdots=d_{l(m)+k}=d$ for $k$
such that for some minimal integer $n$,
\begin{eqnarray*}
\frac{p^k}{m} &\approx& p_0\cddots p_{l(m)}p^{k}=
p_0\cddots p_{l(m)+k}\approx\frac{1}{n},
\\
m^\b d^k&\approx& d_0\cddots
d_{l(m)}d^{k}= d_0\cddots d_{l(m)+k}
\approx n^\a.
\end{eqnarray*}
This forces $m^\b d^k\approx ( \frac{m}{p^k}  )^\a$,
hence $k=\frac{\a-\b}{\log(p^\a d)} \log m+o(\log m)$.

This process produces a sequence $\bbar d$ such that $\a-\frac{C}{\log
n} \leq\b(n) \leq\b+\frac{C}{\log n}$ for all $n$. Moreover, $d_0
\cddots d_{k(n)}\approx n^\a$ for infinitely many $n$ and $d_0 \cddots
d_{l(m)}\approx m^\b$ for infinitely many $m$. Hence $\underline
{h}(\G,\m)=\a$, and $\overline{h}(\G,\m)=\b$.
\end{pf}
%
\begin{remark}
The above two corollaries are obtained by taking particular instances
of exponent functions $\b(n)$. Theorem \ref{MTE} provides a variety
of behaviors for entropy functions. For instance, similarly to Theorem
7.2 on growth functions in \cite{Gri1} and \cite{Bri3}, there exists
uncountable antichains of entropy functions with $\underline{h}(\G,\m
)=\a<\b=\overline{h}(\G,\m)$ for any given $\frac{1}{2}\leq\a<
\b<1$, as is easily proved by playing with exponent functions.
\end{remark}

\subsection{Frequency of oscillations} Corollary \ref{oscil} shows
that the entropy exponent of a random walk can take different values at
different scales. In order to study the difference between such scales,
given $\frac{1}{2} \leq\a\leq\b\leq1$ and a function $H(n)$,
introduce the following quantities, called, respectively, \textit{upper} and
\textit{lower pseudo period exponents} of the function $H(n)$ for $\a$
and $\b$:
\begin{eqnarray*}
u_H(\a,\b) &=& \inf\bigl\{\nu|\exists N_0,\forall n
\geq N_0\mbox{, if } H(n) \leq n^\a,
\\
& &\hspace*{16.5pt} \mbox{then } \exists n \leq m \leq n^\nu, H(m) \geq
m^\b\bigr\},
\\
l_H(\a,\b) &=& \inf\bigl\{\l|\exists N_0,\forall m
\geq N_0\mbox{, if }H(m) \geq m^\b,
\\
& &\hspace*{27.4pt} \mbox{then } \exists m \leq n \leq m^\l, H(n) \leq
n^\a\bigr\}.
\end{eqnarray*}
For $H=H_{\G,\m}$ entropy of a finitely supported measure $\m$ on a
group $\G$, write $u_H(\a,\b)=u_{\G,\m}(\a,\b)$ and $l_H(\a,\b
)=l_{\G,\m}(\a,\b)$.

Note that by playing with the sequence $\bbar d$ in the proof of
Corollary \ref{oscil}, one can easily produce examples of random walks
with arbitrarily large pseudo period exponents, that is, low frequency.
To study how high the frequency may be, that is, how small the pseudo
periods, introduce the functions
\begin{eqnarray*}
u(\a,\b)&=&\inf\bigl\{u_{\G,\m}(\a,\b)|\mbox{$\m$ is a finitely
supported symmetric}
\\
& &\hspace*{126pt} \mbox{measure on a group $\G$}\bigr\},
\\
l(\a,\b)&=&\inf\bigl\{l_{\G,\m}(\a,\b)|\mbox{$\m$ is a finitely
supported symmetric}
\\
& &\hspace*{123pt} \mbox{measure on a group $\G$}\bigr\}.
\end{eqnarray*}

The finitely generated saturated directed groups of Theorem \ref{MTE}
will provide the upper bounds in the following.
%
\begin{theorem}\label{freq}
For $\frac{1}{2} <\a\leq\b<1$,
\[
u(\a,\b)=\frac{\a-1}{\b-1}\quad \mbox{and}\quad
\frac{\b}{\a} \leq l(\a,\b)
\leq\frac{\b-{1/2}}{\a-{1/2}}.
\]
\end{theorem}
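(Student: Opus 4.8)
The plan is to reduce Theorem \ref{freq} to statements about the exponent function $\beta(n)=\beta_{\bar d}(n)$ of section \ref{defbeta}. For the two upper bounds this reduction is Theorem \ref{MTE}: since $\bigl|\frac{\log H_{\G,\m}(n)}{\log n}-\beta(n)\bigr|\le\varepsilon$ for all $n$ large, it is enough to produce valency sequences whose $\beta(n)$ oscillates between levels close to $\alpha$ and levels close to $\beta$ with the transition phases as short as possible. For the two lower bounds the tree plays no role: I only use that $H(n)=H(Y_n)$ is non-decreasing and subadditive, $H(n+n')\le H(n)+H(n')$ (entropy of a convolution), both of which are standard.

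\textit{Lower bounds.} If $H(m)\ge m^\beta$ then, by monotonicity, $H(n)\ge m^\beta$ for all $n\ge m$, so $H(n)\le n^\alpha$ is impossible unless $n\ge m^{\beta/\alpha}$; hence $l_{\G,\m}(\alpha,\beta)\ge\beta/\alpha$ whenever the hypothesis $H(m)\ge m^\beta$ holds for arbitrarily large $m$, and therefore $l(\alpha,\beta)\ge\beta/\alpha$. For $u$, iterating subadditivity gives $H(m)\le\bigl(\frac mn+1\bigr)H(n)\le 2m\,n^{\alpha-1}$ for $m\ge n$ as soon as $H(n)\le n^\alpha$; so $H(m)\ge m^\beta$ forces $m^{1-\beta}\ge\frac12 n^{1-\alpha}$, i.e.\ $\log m\ge\frac{1-\alpha}{1-\beta}\log n-O(1)$, whence $u_{\G,\m}(\alpha,\beta)\ge\frac{1-\alpha}{1-\beta}=\frac{\alpha-1}{\beta-1}$ and $u(\alpha,\beta)\ge\frac{\alpha-1}{\beta-1}$.

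\textit{Upper bounds.} I build valency sequences $\bar d$ taking two values $d<D$, exactly as in the proof of Corollary \ref{oscil}. The one computation needed is that along a block of constant valency $e$, $\log(d_0\cdots d_k)$ is affine in $\log\frac1{p_0\cdots p_k}$ with slope $\beta_e$ (the constant of Fact \ref{bornesup}), so a constant-valency-$e$ block running from an argument $m$ with $\beta(m)=b_0$ to an argument $n$ with $\beta(n)=b_1$ satisfies $\frac{\log n}{\log m}=\frac{\beta_e-b_0}{\beta_e-b_1}$. For the bound on $u$, arrange that the valleys of $\beta(\cdot)$ lie near $\alpha$ and the peaks near $\beta$, with the ascents carried out by blocks of the large valency $D$; the ascent from a valley to the next peak then costs $\frac{\beta_D-\alpha}{\beta_D-\beta}+o(1)$, and this is the worst case in the definition of $u_{\G,\m}$ because $H(n)\le n^\alpha$ can hold only near the valleys. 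As $\delta\mapsto\frac{\delta-\alpha}{\delta-\beta}$ is decreasing and $\beta_D\to1$ when $D\to\infty$ (Fact \ref{bornesup}), taking $D$ large gives $u(\alpha,\beta)\le\frac{1-\alpha}{1-\beta}$, which matches the lower bound, so $u(\alpha,\beta)=\frac{\alpha-1}{\beta-1}$. For the bound on $l$, carry out the descents by blocks of the smallest admissible valley valency $d=2$ (admissible because $\beta_2=\frac12<\alpha$); a descent from a peak near $\beta$ to a valley near $\alpha$ costs $\frac{\beta-\frac12}{\alpha-\frac12}+o(1)$, the worst case for $l_{\G,\m}$, and since $\gamma\mapsto\frac{\beta-\gamma}{\alpha-\gamma}$ is increasing the choice $\beta_d=\frac12$ is best possible in this scheme, giving $l(\alpha,\beta)\le\frac{\beta-\frac12}{\alpha-\frac12}$. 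One family of groups --- valleys with $d=2$, peaks with $D\to\infty$ --- realizes both bounds.

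\textit{Main obstacle.} The genuine work lies in the two constructions. One must check that the worst argument in the definition of $u_{\G,\m}$, resp.\ $l_{\G,\m}$, sits at the bottom of a valley, resp.\ the top of a peak, so that a single transition phase is all that must be crossed; and one must absorb the $\varepsilon$-loss of Theorem \ref{MTE} --- which blurs the sharp thresholds $\alpha,\beta$ into $\alpha\pm\varepsilon,\beta\pm\varepsilon$ --- together with the multiplicative slack in the recursive construction of $\bar d$. Both are handled by letting $\beta(n)$ slightly overshoot, oscillating between $\alpha-\varepsilon'$ and $\beta+\varepsilon'$, and then letting $\varepsilon,\varepsilon'\to0$, using that all the ratios above depend continuously on $(\alpha,\beta,\beta_d,\beta_D)$. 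This overshoot also ensures that $H(n)\le n^\alpha$ and $H(m)\ge m^\beta$ genuinely occur for infinitely many $n$, resp.\ $m$ (so the pseudo-period exponents are not vacuously $0$), with $\underline h(\G,\m)\to\alpha$ and $\overline h(\G,\m)\to\beta$ along the constructed families.
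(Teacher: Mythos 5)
Your proposal is correct and follows essentially the same route as the paper: the lower bounds come from monotonicity and subadditivity of $H(Y_n)$ exactly as in the paper, and the upper bounds come from the two-valued valency construction of Corollary \ref{oscil}, measuring the length of the ascent (valency $D\to\infty$) and descent (valency $d=2$) phases. Your ``affine slope'' formula $\frac{\log n}{\log m}=\frac{\beta_e-b_0}{\beta_e-b_1}$ is just a cleaner packaging of the paper's computation of $\nu$ and $\lambda$ (it reproduces $\frac{\beta_D-\alpha}{\beta_D-\beta}$ and $\frac{\beta_2-\beta}{\alpha-\beta_2}$ exactly), so there is nothing substantively different.
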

\begin{pf} The lower bounds follow from elementary properties of
entropy. For a submultiplicative function $H(n) \leq n^\a$ implies
$H(kn) \leq k n^\a$, so that if $H(kn) \geq(kn)^\b$, then $(kn)^\b
\leq kn^\a$, so $kn\geq n^{({1-\a})/({1-\b})}$ and $u_H(\a,\b)
\geq\frac{1-\a}{1-\b}$. For an increasing function $H(m) \geq m^\b
$ and $H(n) \leq n^\a$ imply $n \geq m^{{\b}/{\a}}$,
so $l_H(\a,\b) \geq\frac{\b}{\a}$.

Concerning the upper pseudo period exponents, the proof of Corollary
\ref{oscil} shows that given $n$, one can take $m\leq C\frac{n}{P^l}$
for $l\leq(\frac{\b-\a}{\log(P^\b D)}+\varepsilon)\log n$, where
$C$ depends only on $D$ and $\varepsilon>0$ is arbitrary, that is,
$m\leq n^\nu$ for
\[
\nu=1-\frac{(\b-\a)\log P}{\log(P^\b D)}-\varepsilon\log P
=1-\frac{\b-\a}{\b+{\log D}/{\log(
({D-1})/{D^2})}}+\varepsilon|
{\log P}|.
\]
As $D$ tends to infinity,
\[
\frac{\log D}{\log(({D-1})/{D^2})}=\frac{\log D}{\log(D-1)-2\log
D}=\frac{1}{{\log(D-1)}/{\log D}-2}\longrightarrow-1,
\]
which proves
\[
u(\a,\b) \leq1-\frac{\b-\a}{\b-1}=\frac{\a-1}{\b-1}.
\]

Similarly for the lower pseudo period exponent, the proof of Corollary
\ref{oscil} shows that given $m$, one can take $n\leq C\frac{m}{p^k}$
for $k=(\frac{\a-\b}{\log(p^\a d)}+\varepsilon)\log m$, that is,
$n\leq m^\l$ for
\[
\l=1-\frac{(\a-\b)\log p}{\log(p^\a d)}-\varepsilon\log p=1-\frac
{\a-\b}{\a+{\log d}/{\log(({d-1})/{d^2})}}+\varepsilon |
{\log p}|.
\]
Taking $d=2$, this proves
\[
l(\a,\b)\leq1+\frac{\b-\a}{\a-{1/2}}=\frac{\b-
{1/2}}{\a-{1/2}}.
\]
\upqed
\end{pf}
%
\begin{remark}
The value $u(\a,\b)=\frac{\a-1}{\b-1}$ is tightly related to
subadditivity of entropy, which implies in particular that $\overline
{h}(\G,\m) \leq1$ for any group $\G$ with finitely supported
measure $\m$. Also Theorem \ref{MTE} shows that the upper bound
$l(\a,\b)\leq\frac{\b-{1/2}}{\a-{1/2}}$ is optimal among
saturated directed groups with the measure $\m$ equidistributed on
$S_{d_0}\HF S_{d_0}$. It is unclear whether this bound is optimal in
general. It could be related to question \ref{qlb} on lower bound
$\underline{h}(\G,\m) \geq\frac{1}{2}$.
\end{remark}

\subsection{Drift of the random walk} Theorem \ref{MTE} provides
estimates on the drift $L_{\G,\m}(n)=\E\|Y_n\|$ of the random walk
$Y_n$ of step distribution $\m$ equidistributed on $S_{d_0}\HF S_{d_0}$,
where \mbox{$\|\cdot\|$} is the word norm for some (arbitrary) generating set.
%
\begin{corollary}\label{cordrift}
For any $\varepsilon>0$ and $n$ large enough,
\[
\b(n)-\varepsilon\leq\frac{\log L_{\G,\m}(n)}{\log n} \leq\frac
{1+\b(n)}{2}+\varepsilon.
\]
\end{corollary}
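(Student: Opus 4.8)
The strategy is to relate the drift to the entropy using the two classical inequalities that bound the drift both from below and from above in terms of the entropy function. For the lower bound, recall the well-known fact (see \cite{KV} or \cite{BKN}) that for any symmetric finitely supported measure one has $H_{\G,\m}(n)\leq C L_{\G,\m}(n)$ with $C$ depending only on the (fixed) word norm and the support of $\m$: the entropy of $Y_n$ cannot exceed the logarithm of the number of group elements within distance $\|Y_n\|$ of the identity, and this ball has cardinality at most exponential in $\|Y_n\|$, whence $H(Y_n)\leq C\,\E\|Y_n\|$ by concavity of the logarithm (Jensen). Combined with Theorem \ref{MTE}, which gives $H_{\G,\m}(n)\geq n^{\b(n)-\e}$ for $n$ large, this immediately yields $L_{\G,\m}(n)\geq \frac{1}{C}n^{\b(n)-\e}\geq n^{\b(n)-2\e}$, hence the lower bound $\frac{\log L_{\G,\m}(n)}{\log n}\geq \b(n)-\e$ after renaming $\e$.

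For the upper bound, the plan is to use the other standard inequality, which in the form convenient here reads $L_{\G,\m}(n)^2\leq C\,n\,H_{\G,\m}(n)$ (up to logarithmic factors), or more precisely the Erschler--Karlsson type estimate $\E\|Y_n\|\leq C\sqrt{n\,H_{\G,\m}(n)}$ valid for symmetric finitely supported $\m$ on any group. The heuristic is that the random walk has ``speed at most $\sqrt{n}$'' corrected by the entropy: the displacement at time $n$ is controlled by $\sqrt{n}$ times the square root of the amount of information accumulated. Plugging the upper bound of Theorem \ref{MTE}, namely $H_{\G,\m}(n)\leq n^{\b(n)+\e}$, gives $L_{\G,\m}(n)\leq C\sqrt{n\cdot n^{\b(n)+\e}}=C\,n^{(1+\b(n)+\e)/2}$, so $\frac{\log L_{\G,\m}(n)}{\log n}\leq \frac{1+\b(n)}{2}+\e$ for $n$ large, as claimed.

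The main obstacle, and the step that requires the most care, is to make sure one is invoking the upper-bound inequality $\E\|Y_n\|\lesssim\sqrt{n\,H_{\G,\m}(n)}$ in a form that genuinely applies to these groups: it holds for simple random walks driven by symmetric finitely supported measures (which is exactly our setting, since $\m$ is equidistributed on the finite symmetric set $S_{d_0}HFS_{d_0}$), and it is independent of the choice of word norm up to a multiplicative constant since any two word norms on a finitely generated group are bi-Lipschitz equivalent. One should also note that $\b(n)$ is bounded away from $0$ and $1$ by Fact \ref{bornesup}, so all the error terms $\e$ are genuinely absorbed for $n$ large enough. The lower-bound direction is entirely routine once Theorem \ref{MTE} is granted; the upper-bound direction is equally short once the speed-entropy inequality is quoted. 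I would present both halves in a single short paragraph each, citing Theorem \ref{MTE} for the entropy estimates and the relevant entropy-drift inequalities from \cite{KV}, \cite{Ersdrift} (or the self-contained versions used in \cite{BKN}).
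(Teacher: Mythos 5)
Your proposal is correct and takes essentially the same route as the paper: the paper's proof simply cites Lemmas 6 and 7 of \cite{Ers3} for the two-sided bounds $c_1 H_{\G,\m}(n)-c_2 \leq L_{\G,\m}(n) \leq c_3\sqrt{n(H_{\G,\m}(n)+\log n)}$ and combines them with Theorem \ref{MTE}, exactly as you do. Your volume/Jensen sketch of the lower inequality is just a standard proof of the cited lemma, so there is no substantive difference.
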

\begin{pf}
Lemmas 6 and 7 in \cite{Ers3} show that there are $c_1,c_2,c_3>0$ with
\[
c_1 H_{\G,\m}(n) -c_2 \leq L_{\G,\m}(n)
\leq c_3 \sqrt{n\bigl(H_{\G,\m
}(n)+\log n}\bigr).
\]
Combine with Theorem \ref{MTE}.
\end{pf}

\section{Return probability}

\subsection{General estimates}

Given a valency sequence $\bbar d=(d_i)_i$, set
\[
l(n)=l_{\bar d}(n)=\max\biggl\{l\Big| \frac{d_0^3}{d_0-1} \cddots
\frac
{d_l^3}{d_l-1}= \frac{d_0}{p_0} \cddots\frac{d_l}{p_l} \leq n\biggr\}\vadjust{\goodbreak}
\]
and define the auxiliary exponent sequence
\[
\b'(n)=\b'_{\bar d}(n)=\frac{\log(d_0 \cddots d_{l(n)})}{\log n}.
\]
In particular, $p_0\cddots p_{l(n)}n \approx d_0\cddots d_{l(n)}=n^{\b'(n)}$.
%
\begin{theorem}\label{RPE}
For any $\varepsilon>0$ the return probability of the simple random
walk $Y_n$ with generating set $S_{d_0}\HF S_{d_0}$ on the saturated
directed group $\G(S_{d_0},\HF)<\operatorname{Aut}(ET_{\bar d})$ satisfies for $n$
large enough,
\[
\b'(n) -\varepsilon\leq\frac{\log(-\log P(Y_n=id))}{\log n}\leq\b (n)+\varepsilon.
\]
In particular, $\underline{p}(\G)\geq\liminf\b'(n)$ and $\overline
{p}(\G) \leq\limsup\b(n)$.
\end{theorem}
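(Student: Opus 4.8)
The plan is to establish the two bounds separately. For the \emph{upper bound} $\frac{\log(-\log P(Y_n=id))}{\log n}\leq \b(n)+\e$, i.e. $P(Y_n=id)\geq \exp(-n^{\b(n)+\e})$, the natural approach is a ``return path of controlled activity'' estimate. I would use Proposition \ref{counta}, which says that the number of elements reachable by a word of activity $\leq r$ is at most $C^r$, together with the entropy estimate of Theorem \ref{MTE}. More concretely, a standard argument (going back to folklore on wreath products and to the techniques in \cite{BKN}) shows $P(Y_{2n}=id)\geq \frac{1}{\#\{\g : \exists w=_\G\g, \, a(w)\leq K\E a(Y_n)\}}$ up to polynomial losses, after restricting to the (large-probability by Markov's inequality and the large-deviation control of Lemma \ref{expectact}) event that the activity of $Y_n$ stays below a constant multiple of its expectation. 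Since by Lemma \ref{expectact} we have $\E a(Y_n)\leq C_\th n^{\b^\th(n)}$, and by Proposition \ref{counta} the cardinality in question is $\leq C^{r}$ with $r=K\E a(Y_n)$, we get $P(Y_{2n}=id)\gtrsim C^{-Kn^{\b^\th(n)}} = \exp(-O(n^{\b^\th(n)}))$, and Proposition \ref{thetapprox} lets us replace $\b^\th(n)$ by $\b(n)+\e$. One has to be a little careful to produce an actual return probability lower bound rather than just a counting statement --- the cleanest way is to note that conditioned on a typical value of the activity, the walk is roughly equidistributed (or at least has bounded Radon--Nikodym density, via Fact \ref{supp} and Remark \ref{remarkphi}) on a set of size $\exp(O(n^{\b(n)+\e}))$, so the diagonal of the two-step transition kernel is bounded below accordingly.

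For the \emph{lower bound} $P(Y_n=id)\leq \exp(-n^{\b'(n)-\e})$, I would argue by projecting to a suitable quotient or sub-configuration where the walk must fill up a large lamp configuration. The point of the auxiliary sequence $\b'(n)$, defined via $l(n)=\max\{l:\frac{d_0}{p_0}\cdots\frac{d_l}{p_l}\leq n\}$, is that down to level $l(n)$ the rewritten word $Y_n^v$ at a vertex $v$ of depth $l(n)$ still has length roughly $p_0\cdots p_{l(n)}n$, which is on the order of $n^{\b'(n)}/(d_0\cdots d_{l(n)})$; summing the contributions over the $d_0\cdots d_{l(n)}\approx n^{\b'(n)}$ vertices at that level, the total ``work'' is $\approx n$, and one extracts that with overwhelming probability the walk must visit on the order of $n^{\b'(n)}$ distinct active boundary points, each carrying an independent near-uniform $F$-lamp. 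To return to the identity, all these lamps must be reset; a walk that visits $N$ distinct lamps and resets all of them has probability $\leq (\#F)^{-cN}$ of doing so (this is the lamplighter-type bound, and it is where the finiteness and non-triviality of $F$ is essential, cf. Fact \ref{supp}), giving $P(Y_n=id)\leq \exp(-c\, n^{\b'(n)})$ and hence the claimed bound after absorbing constants into $\e$. I expect to need the large-deviation estimates of Lemma \ref{randomRP}(2), iterated down to level $l(n)$ as in the proof of Lemma \ref{expectact}, to guarantee that the lengths $m_v$ at depth $l(n)$ are simultaneously of the right order with probability $\geq 1 - \exp(-n^{c})$, which does not spoil the bound.

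The \textbf{main obstacle} I anticipate is the lower bound on $-\log P(Y_n=id)$: turning ``the walk typically has large activity, i.e. touches many lamps'' into ``it is exponentially unlikely to have reset \emph{all} of them'' requires some care, because activity counts the boundary points \emph{ever} visited, whereas what matters for return is that the final lamp configuration is trivial, and these differ because a lamp visited twice may cancel. The way around this is exactly the ascendance-forest picture of Section \ref{ascfor} and Remark \ref{remarkphi}: conditionally on the tree structure $g_n$ (equivalently on the activity pattern), the lamp values at the active boundary points are genuinely independent and near-uniform in $F$ (this is the content of Remark \ref{remarkphi} and the computation in Fact \ref{supp}), so the probability that they all equal the identity is at most $(\#F)^{-(\text{number of active points})}$, and the number of active points is $\geq \frac{1}{8}(\E a(Y_n)-A)\gtrsim n^{\b'(n)-\e}$ on an event of probability bounded away from $0$ (using $\E a(Y_n)\geq c\, n^{\b'(n)}$, which follows by running the iteration of Lemma \ref{expectact} only down to level $l(n)$ rather than $k(n)$). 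Making the constants in this chain uniform --- so that the final $\e$ genuinely absorbs everything --- and checking that $\b'(n)$ as defined is indeed the exponent that comes out of stopping the recursion at $l(n)$, are the routine-but-fiddly points; combine with Theorem \ref{MTE} and Proposition \ref{thetapprox} exactly as in the upper bound for the bookkeeping.
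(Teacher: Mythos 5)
Your plan follows the paper's proof essentially step for step. For the upper bound: Markov's inequality applied to the activity (via Lemma \ref{expectact}), the counting estimate of Proposition \ref{counta}, and the fact that $P(Y_n=\g)$ is maximised at $\g=id$ (the paper uses this directly instead of passing to time $2n$ by Cauchy--Schwarz, but the two devices are interchangeable; Theorem \ref{MTE} itself is not needed here). For the lower bound: the splitting $P(Y_n=id)\le P\bigl(a(Y_n)<n^{\b'(n)}\bigr)+(\#F)^{-n^{\b'(n)}}$, where the second term comes from the conditional uniformity of the lamp configuration given the activity pattern (Remark \ref{remarkphi}), exactly as you describe; this is also the paper's resolution of your ``lamps may cancel'' worry.

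One caution about the final paragraph of your proposal. The assertion that the activity is $\gtrsim n^{\b'(n)-\e}$ ``on an event of probability bounded away from $0$'', derived from an expectation bound $\E a(Y_n)\ge c\,n^{\b'(n)}$ by running the iteration of Lemma \ref{expectact} down to level $l(n)$, is \emph{not} sufficient and would be a genuine gap if it were the actual argument: if $P(a(Y_n)<n^{\b'(n)})$ is only bounded by a constant less than $1$, the splitting above yields no decay whatsoever for $P(Y_n=id)$. What is required --- and what you do announce correctly in the preceding paragraph --- is the stretched-exponential tail bound $P\bigl(a(Y_n)<n^{\b'(n)}\bigr)\le\exp(-cn^{\b'(n)-\e})$. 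The paper obtains this (Fact \ref{vercon} and inequality (\ref{sar})) by locating the vertex \emph{closest to the root} among those $v$ with $|v|\le l(n)$ at which an anomalous contraction $m_{vt}<(p_{|v|}-\th)m_v$ occurs, so that its length is guaranteed to satisfy $m_v\ge(p_0-\th)\cdots(p_{|v|-1}-\th)n$, and then taking a union bound over the $d_0\cdots d_{l(n)}=n^{\b'(n)}$ candidate vertices. This ``first bad vertex'' device is the precise form of the iterated large-deviation argument you gesture at, and it is what makes the union bound legitimate despite each level's deviation estimate being conditioned on the lengths at the previous levels. With that point made explicit, your proposal coincides with the paper's proof.
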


For instance in the case of constant valency $d$, one has
\begin{eqnarray*}
\b'_d&=&\frac{1}{3-{\log(d-1)}/{\log d}} \leq\frac{\log(-\log
P(Y_n=id))}{\log n}\\
&\leq&\frac{1}{2-{\log(d-1)}/{\log d}}=\b_d.
\end{eqnarray*}
Note that $\b'_2=\frac{1}{3}$ and $\b'_d \longrightarrow_{d
\rightarrow
\infty} \frac{1}{2}$ for lower bounds, compared with $\b_2=\frac{1}{2}$
and $\b_d \longrightarrow_{d \rightarrow\infty} 1$ for upper bounds.
\begin{pf*}{Proof of Theorem \ref{RPE}}
Proposition \ref{expectact} implies that for $n$ large enough,
\[
P\bigl(a(Y_n) \geq n^{\b(n)+\varepsilon}\bigr)
\leq\frac{\E a(Y_n)}{n^{\b
(n)+\varepsilon}}
\leq\frac{n^{\b(n)+{\varepsilon
}/{2}}}{n^{\b(n)+\varepsilon}}=n^{-{\varepsilon
}/{2}}\longrightarrow0.
\]
Using the fact that for fixed $n$, the function $P(Y_n=\gamma)$ is
maximal at $\gamma=id$ by symmetry of the random walk, and the
inequality of Proposition \ref{counta}, deduce
\begin{eqnarray*}
P\bigl(a(Y_n)\leq n^{\b(n)+\varepsilon}\bigr)&=&\sum
_{\{
\gamma| \exists w=_\G\gamma, a(w) \leq n^{\b(n)+\varepsilon}\}} P(Y_n=\gamma)
\\
&\leq& P(Y_n=id) C^{n^{\b(n)+\varepsilon}}.
\end{eqnarray*}
As the left-hand term tends to $1$, this proves the upper bound.

Recall that given a word $Y_n$ of length $n$, the rewriting process
provides for each vertex $v \in T_{\bar d}$ a word $Y_n^v$ of random
length $m_v$ (Proposition \ref{rp}). Given $\th>0$ small enough so
that for any $n$ large enough $l(n) \leq k^{-\th}(n)$ [defined in
Section \ref{defbeta} by $(p_0-\th)\cddots(p_{k^{-\th}(n)}-\th
)n\approx N_0$ for an arbitrary $N_0\geq1$, so that the inequality
holds for large $n$ as soon as $\frac{d_i-1}{d_i^2}-\th> \frac
{d_i-1}{d_i^3}$ for all $i$], observe the following:
%
\begin{fact}\label{vercon}
For a word $Y_n$ with $a(Y_n) < n^{\b'(n)}$, there exists a vertex $v$
in $T_{\bar d}$ such that:
\begin{longlist}[(3)]
\item[(1)] $|v| \leq l(n)$;
\item[(2)] there is $t$ such that $m_{vt} < (p_{|v|}-\th)m_v$;
\item[(3)] $m_v \geq(p_0-\th) \cddots(p_{|v|-1}-\th)n$.
\end{longlist}
\end{fact}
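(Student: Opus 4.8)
The plan is to argue by contradiction. Assume that \emph{no} vertex $v$ with $|v|\leq l(n)$ satisfies conditions (1), (2) and (3) simultaneously, and derive that $a(Y_n)\geq n^{\b'(n)}$, contrary to the hypothesis $a(Y_n)<n^{\b'(n)}$. The bookkeeping tool is that activity is additive over any fixed level of $T_{\bar d}$: iterating Proposition~\ref{counta} gives $a(Y_n)=\sum_{|v|=l}a(Y_n^v)$ for every level $l$. Since $a(Y_n^v)\geq 1$ as soon as the alternate word $Y_n^v$ has positive length, it suffices to produce $d_0\cdots d_{l(n)}$ vertices with $m_v\geq 1$, namely those of level $l(n)+1$.

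The heart of the argument is an induction on $k=|v|$ showing that, under the contradiction hypothesis, every vertex $v$ with $|v|\leq l(n)+1$ satisfies condition (3), i.e. $m_v\geq (p_0-\th)\cdots(p_{|v|-1}-\th)n$. The base case $v=\emptyset$ is the equality $m_\emptyset=n$. For the inductive step, take $v=ut$ with $|u|=k\leq l(n)$; by the induction hypothesis $u$ satisfies (3), and it satisfies (1) since $|u|\leq l(n)$, so by assumption $u$ must fail (2). Failure of (2) says $m_{ut'}\geq (p_k-\th)m_u$ for \emph{every} son $ut'$ of $u$; multiplying the inductive lower bound on $m_u$ by $(p_k-\th)$ yields the claimed bound for $v$.

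Now apply this to the $d_0\cdots d_{l(n)}$ vertices $v$ of level $l(n)+1$: each satisfies $m_v\geq (p_0-\th)\cdots(p_{l(n)}-\th)n$. Since $\th$ is chosen small enough that $p_i-\th>\frac{d_i-1}{d_i^3}$ for all $i$, this gives
$$m_v>\frac{d_0-1}{d_0^3}\cdots\frac{d_{l(n)}-1}{d_{l(n)}^3}\,n=\frac{n}{\frac{d_0^3}{d_0-1}\cdots\frac{d_{l(n)}^3}{d_{l(n)}-1}}\geq 1,$$
where the last inequality is precisely the defining property $\frac{d_0^3}{d_0-1}\cdots\frac{d_{l(n)}^3}{d_{l(n)}-1}\leq n$ of $l(n)$. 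Hence $Y_n^v$ has positive length, so $a(Y_n^v)\geq 1$, for each vertex $v$ of level $l(n)+1$; summing over that level gives $a(Y_n)\geq d_0\cdots d_{l(n)}=n^{\b'(n)}$, the desired contradiction. Therefore some vertex $v$ with $|v|\leq l(n)$ meets all three conditions, and since (1) is just $|v|\leq l(n)$ it in fact satisfies (2) and (3).

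I expect the slightly delicate point to be exactly this last estimate, i.e. keeping the lower bound on $m_v$ at level $l(n)+1$ above $1$: this is why the auxiliary exponent $\b'$ is built from $\frac{d_i^3}{d_i-1}=\frac{d_i}{p_i}$ (rather than from $\frac{1}{p_i}$, which governs $\b$) and why $\th$ is taken small relative to the $\frac{d_i-1}{d_i^3}$, so that the definition of $l(n)$ and the slack $p_i-\th>\frac{d_i-1}{d_i^3}$ cancel the factor $n$ exactly. The only other ingredient, that a word of positive length has activity at least $1$, is immediate from the definition of the minimal tree: for length $1$ the root itself is an active leaf, and for larger length some son again inherits a subword of positive length, since each $HF$-factor passes a factor $k'$ to exactly one son.
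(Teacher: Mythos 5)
Your proof is correct and follows essentially the same contradiction-plus-induction argument as the paper: assume no vertex works, propagate the lower bound $m_v\geq(p_0-\th)\cdots(p_{|v|-1}-\th)n$ down the tree, and conclude that too many vertices carry positive length, forcing $a(Y_n)\geq n^{\b'(n)}$. Your two small variations are actually improvements: carrying condition (3) inside the induction replaces the paper's final ``take such a vertex closest to the root'' step, and summing activity over level $l(n)+1$ (which has exactly $d_0\cdots d_{l(n)}$ vertices) together with the direct use of $p_i-\th>p_i/d_i$ and the definition of $l(n)$ makes the count $a(Y_n)\geq d_0\cdots d_{l(n)}$ come out exactly, where the paper's sum over level $l(n)$ is off by one level as written.
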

\begin{pf}
By contradiction assume that for all $|v| \leq l(n)$ and $t$, one has
$m_{vt} \geq(p_{|v|}-\th)m_v$. Then by induction on $|v|$, for all
$v$ in level $l(n)$,
\[
m_v \geq(p_0-\th)\cddots(p_{l(n)-1}-\th)n \geq
\frac
{N_0}{C(p_{l(n)}-\th)\cddots(p_{k^{-\th}(n)}-\th)}\geq1,
\]
so that $a(Y_n^v) \geq1$. Then $a(Y_n) \geq\sum_{|v|=l(n)}a(Y_n^v)
\geq d_0\cddots d_{l(n)}=n^{\b'(n)}$, which is a contradiction. This
shows the existence of a vertex $v$ satisfying (1) and (2). Such a
vertex that is closest to the root also satisfies (3).
\end{pf}

Fact \ref{vercon} guarantees that $P(a(Y_n) \leq n^{\b'(n)})$ is
bounded above by
\[
\sum_{|v| \leq l(n)} P\bigl[\exists t, m_{vt}
\leq(p_{|v|}-\th) m_v \mbox{ and } m_v
\geq(p_0-\th) \cddots(p_{|v|-1}-\th)n\bigr].
\]
Now $P[m_{vt} \leq(p_{|v|}-\th) m_v \mbox{ and } m_v]=P[m_{vt}
\leq(p_{|v|}-\th) m_v |m_v]P(m_v)$, and by Proposition \ref
{randomRP} there is $c_\th<1$ with $ P[m_{vt} \leq(p_{|v|}-\th) m_v
|m_v] \leq c_\th^{m_v}$. Then $P(a(Y_n) \leq n^{\b'(n)})$ is bounded
above by
%
\begin{equation}
\label{sar} \sum_{|v|\leq l(n)} c_\th^{(p_0-\th)\cddots
(p_{l(n)-1}-\th)n}=n^{\b'(n)}c_\th^{(p_0-\th)\cddots(p_{l(n)-1}-\th
)n},
\end{equation}
because there are $d_0\cddots d_{l(n)}= n^{\b'(n)}$ vertices such that
$|v| \leq l(n)$. Compute by conditioning on activity, recalling that
Fact \ref{cond} ensures $P[\varphi_n=id|a(Y_n)=a]=(\frac{1}{\#F})^a$.
\begin{eqnarray*}
P(Y_n=id) &\leq& P(\varphi_n =id) = \sum
_{a=0}^n P\bigl[\varphi_n=id|a(Y_n)=a
\bigr]P\bigl(a(Y_n)=a\bigr),
\\
&=& \sum_{a=0}^n \biggl(
\frac{1}{\#F} \biggr)^a P\bigl(a(Y_n)=a\bigr).
\end{eqnarray*}
The decay of $ ( \frac{1}{\#F}  )^a$ with $a$ allows us to
split the sum between $a < n^{\b'(n)}$ and $a \geq n^{\b'(n)}$. Obtain
\begin{eqnarray*}
P(Y_n=id) &\leq& P\bigl(a(Y_n) < n^{\b'(n)}\bigr)+
\biggl(\frac{1}{\#F} \biggr)^{n^{\b'(n)}}P\bigl(a(Y_n) \geq
n^{\b'(n)}\bigr),
\\
&\leq& n^{\b'(n)}c_\th^{(p_0-\th)\cddots(p_{l(n)-1}-\th)n}+ \biggl(
\frac{1}{\#F} \biggr)^{n^{\b'(n)}}
\end{eqnarray*}
by inequality (\ref{sar}). Now there is a function $\varepsilon(\th)
\longrightarrow_\th0$ such that $(p_0-\th)\cddots(p_{l(n)-1}-\th)n
\geq n^{\b'(n)-\varepsilon(\th)}$ (cf. Proposition \ref
{thetapprox}), so
\[
P(Y_n=id) \leq\exp\bigl(-c n^{\b'(n)-\varepsilon(\th)}\bigr),
\]
which proves the lower bound.
\end{pf*}

As for Theorem \ref{MTE}, the upper bound is valid for $G(S,H)$, that
is, for $F=\{1\}$, but the lower bound is valid only with a nontrivial
finite group $F$.

\begin{remark}
Fact \ref{vercon} shows that if the activity is small, then there is
at least one edge along which the word length is contracted more than
expected, with $m_{vt} \leq(p_{|v|}-\th)m_v$ instead of $m_{vt} =
p_{|v|}m_v$. In fact, in order to have $a(Y_n)<n^{\b'(n)}$, such a
strong contraction must occur at many edges, so that inequality (\ref
{sar}) does not seem optimal. The example below shows that the lower
bound of Theorem~\ref{RPE} is tight, and thus inequality (\ref{sar})
is essentially optimal in general. It might, however, be improved for
particular instances of saturated directed groups.
\end{remark}

\subsection{A specific example} Consider the particular case of a
binary tree and specific generators $s=(1,1)\s$ with $\s$ the
nontrivial permutation in $S_2$ and $h=(h,s)$. The group $\langle
s,h\rangle<\operatorname{Aut}(T_2)$ is a well-known automata group isomorphic to the
infinite dihedral group $D_\infty= \langle s,h|s^2=h^2=1\rangle$.
%
\begin{proposition}\label{specific}
A random walk $Z_n$ on the extended directed group $\G(S,\HF)=F \wr_{\partial T_2} D_\infty<\operatorname{Aut}(ET_2)$ with $F$ Abelian finite with step
distribution equidistributed on $S\HF HS$ satisfies
\[
\lim\frac{{\log}|{\log P}(Z_n=id)|}{\log n}=\frac{1}{3}
\quad\mbox{and}\quad \lim
\frac{\log\E\|Z_n\|}{\log n}=\frac{1}{2}.
\]
\end{proposition}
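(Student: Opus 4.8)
The plan is to recognise $\G(S,HF)$ as a lamplighter group and then to supply, by hand, the two complementary estimates that theorems \ref{RPE} and \ref{cordrift} leave open.

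\emph{Step 1: the group is a lamplighter over a half-line.} Since $\langle s,h\rangle\cong D_\infty$ (as recalled just before the statement), and $h$ fixes the ray $1^\infty$ while $s$ moves it, the $D_\infty$-orbit $X$ of $1^\infty$ in $\partial T_2$ is $D_\infty/\langle h\rangle$; its Schreier graph for $\{s,h\}$ is the half-line, $D_\infty$ acting by the two obvious reflections. The conjugates of the $\f_f$ ($f\in F$) by $D_\infty$ generate $\bigoplus_{x\in X}F$, so $\G(S,HF)=F\wr_X D_\infty$, a lamplighter-type group whose base walk lives on a copy of $\N$. Projecting to $D_\infty$ turns $Z_n$ into the simple random walk $g_n$ on $D_\infty$ with a symmetric, finitely supported, generating step law; as $D_\infty$ is virtually $\Z$, this \emph{head-walk} is recurrent and diffusive ($\E\|g_n\|_{D_\infty}\asymp\sqrt n$, and more generally $\E[\max_{i\le n}\|g_i\|_{D_\infty}]\asymp\sqrt n$), and for every $R$ its confinement to a ball of radius $R$ has probability comparable to $R^{-1}\exp(-cn/R^{2})$ (comparison with the simple random walk on $\Z$). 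Finally, at each step the $F$-part of the increment (equidistributed on $F$ and independent of everything else) toggles the lamp at the current position of $g_n$ on $X$, so the support of the lamp configuration $\f_n$ in $Z_n=\f_n g_n$ lies in a bounded neighbourhood of the range of the head-walk on $X\cong\N$.

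\emph{Step 2: drift.} Writing $M_n$ for the largest position of $X\cong\N$ visited by the head-walk up to time $n$, one has $\|\f_n g_n\|_\G\le C M_n$: run the head out to $M_n$ and back, toggling each lamp ($O(1)$ generators per site, since every $\f_f$ belongs to the generating set) as it is passed, then correct $g_n$ with $O(1)$ more generators. As $M_n\asymp\max_{i\le n}\|g_i\|_{D_\infty}$, Step 1 gives $\E M_n\asymp\sqrt n$, whence $\E\|Z_n\|\le C\sqrt n$ and $\limsup\frac{\log\E\|Z_n\|}{\log n}\le\tfrac12$. Corollary \ref{cordrift}, with $\b(n)=\b_{\bar d}(n)\to\b_2=\tfrac12$ for $\bar d\equiv2$, gives $\liminf\frac{\log\E\|Z_n\|}{\log n}\ge\tfrac12$, so the limit is $\tfrac12$.

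\emph{Step 3: return probability.} Fix $R=\lceil n^{1/3}\rceil$ and let $B\subset X$ be the first $R+1$ points of the half-line. Summing over head trajectories $(g_0,\dots,g_n)$ with $g_0=g_n=e$ and $g_i(1^\infty)\in B$ for all $i$,
$$P(Z_n=id)\ \ge\ \sum P\big(\text{head trajectory}\big)\cdot P\big(\f_n=0\,\big|\,\text{head trajectory}\big).$$
Conditionally on the head trajectory, the lamp at each touched site is a product of at least one i.i.d.\ uniform element of $F$, hence uniform, and distinct sites are independent, so $P(\f_n=0\mid\text{head trajectory})=(\#F)^{-\#\{\text{touched sites}\}}\ge(\#F)^{-(R+O(1))}$. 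The remaining sum equals $P\big(g_n=e\text{ and }g_i(1^\infty)\in B\ \forall i\big)\ge cR^{-1}\exp(-c'n/R^{2})$ by the confinement estimate of Step 1. With $R\asymp n^{1/3}$ both factors are $\exp(-O(n^{1/3}))$, so $P(Z_n=id)\ge\exp(-Cn^{1/3})$ and $\limsup\frac{\log|\log P(Z_n=id)|}{\log n}\le\tfrac13$. Theorem \ref{RPE}, with $\b'(n)=\b'_{\bar d}(n)\to\b'_2=\tfrac13$, provides the matching $\liminf\ge\tfrac13$, so the limit is $\tfrac13$.

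\emph{Main obstacle.} Everything genuinely delicate — that these exponents cannot be smaller — is already contained in theorems \ref{RPE} and \ref{cordrift}; the task here is only the ``easy'' complementary direction, exploiting the rigidity of $D_\infty$ on the half-line. The one point needing real care is the confinement/return estimate $P(\text{stay within radius }R,\ g_n=e)\asymp R^{-1}\exp(-cn/R^{2})$ for an \emph{arbitrary} symmetric finitely supported walk on $D_\infty$ (comparison with the simple random walk on $\Z$, or the usual Dirichlet-form argument), together with the book-keeping that identifies $\G(S,HF)$ with $F\wr_X D_\infty$ and pins down exactly where lamps get toggled; the optimisation $R\asymp n^{1/3}$ balancing $\exp(-cn/R^{2})$ against $(\#F)^{-R}$ is then the standard lamplighter trade-off.
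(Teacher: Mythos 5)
Your proposal is correct, and for the two ``complementary'' bounds it takes a genuinely different route from the paper. The paper also gets the hard directions ($\liminf$ of the return-probability exponent $\geq \frac13$, $\liminf$ of the drift exponent $\geq \frac12$) from theorem \ref{RPE} and corollary \ref{cordrift}; but for the easy directions it does not redo any lamplighter computation. Instead it observes that the Schreier graph $X=1^\infty G(S,H)$ is a half-line admitting a canonical $2$-to-$1$ covering $c:Cay(D_\infty,\{s,h\})\twoheadrightarrow X$, and that, because $F$ is abelian, the lamp configuration of $Z_n$ on $X$ is the push-forward $\f_n(x)=\sum_{y\in c^{-1}(x)}\F_n(y)$ of the lamp configuration of the corresponding walk $\tilde X_n$ on the ordinary wreath product $F\wr_{D_\infty}D_\infty$; hence $P(Z_n=1)\geq P(\tilde X_n=1)\approx e^{-n^{1/3}}$ (citing Pittet--Saloff-Coste) and $\|Z_n\|\leq\|\tilde X_n\|$ via the quotient map, with $\E\|\tilde X_n\|\approx n^{1/2}$ (citing Erschler). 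You instead work directly on $F\wr_X D_\infty$ and reprove the standard lamplighter trade-off from scratch: confinement of the head walk in a box of radius $R\asymp n^{1/3}$ plus conditional uniformity and independence of the lamps for the return probability, and the max-range bound for the drift. What your approach buys is independence from the cited black boxes and, notably, from the hypothesis that $F$ is abelian (conditional uniformity of a product of i.i.d.\ uniform elements holds in any finite group, whereas the paper's push-forward argument genuinely needs commutativity); what it costs is having to establish the confinement/return estimate $P(\text{stay in }B(R),\,g_n=e)\gtrsim R^{-1}e^{-cn/R^2}$ for an arbitrary symmetric finitely supported lazy walk on $D_\infty$, which you correctly flag as the delicate point (standard Dirichlet-form/Perron-eigenvalue argument; laziness holds here since $1\in SHFHS$). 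One small imprecision: the lamps are toggled along the \emph{inverted} orbit $(g_1\cdots g_i)^{-1}1^\infty$ rather than at $g_i(1^\infty)$; this is harmless because confinement of the trajectory $(g_i)$ in a ball of $D_\infty$ confines the reversed trajectory as well, so both the count of touched sites in Step 3 and the support bound in Step 2 go through with adjusted constants.
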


In this particular case, the lower bounds of Theorem \ref{RPE} and
Corollary \ref{cordrift} are tight. Note that this specific group
played a crucial role in the construction of antichains of growth
functions in \cite{Gri1} and in the construction of groups with
oscillating growth in \cite{Bri1} (Chapter 2).
\begin{pf*}{Proof of Proposition \ref{specific}}
Theorem \ref{RPE} and Corollary \ref{cordrift} ensure
\[
\liminf\frac{{\log}|{\log P}(Z_n=id)|}{\log n}\geq\frac{1}{3}
\quad\mbox{and}\quad \lim
\frac{\log\E\|Z_n\|}{\log n}\geq\frac{1}{2}.
\]
To get an upper bound, compare with the usual wreath product $F \wr_{D_\infty} D_\infty$, for which the return probability satisfies
$P(\tilde{X}_n=id) \approx e^{-n^{1/3}}$ (Theorem 3.5 in \cite
{PSC2}, noting that $\Z$ and $D_\infty$ with their usual generating
sets have the same Cayley graph) and the drift $L_{F\wr_{D_\infty
}D_\infty}(n)\approx n^{{1}/{2}}$ (by Lemma 3 in
\cite{Ersdrift}).\vadjust{\goodbreak}

Precisely, consider the usual wreath product (lamplighter)
\[
F \wr_{D_\infty} D_\infty= \bigl\{\F\dvtx D_\infty\rightarrow
F|{\operatorname{supp}}(\F) \mbox{ is finite}\bigr\} \rtimes D_\infty
\]
with the action $d\doot\F(x)=\F(xd)$. Denote $S=\langle s\rangle\simeq
S_2$ and $H=\langle h \rangle\simeq S_2$, so that $D_\infty\simeq
G(S,H) <\operatorname{Aut}(T_2)$, and denote $F$ the subgroup $\{\F\dvtx D_\infty
\rightarrow F|\forall x \neq1_{D_\infty},\F(x)=1_F \}$ in $F \wr_{D_\infty} D_\infty$. Let $\tilde{X}_n$ be the random walk with
alternate successive increments equidistributed in the finite symmetric
sets $HSH$ and $F$ (this is, up to negligible first and last factors,
the random walk with step distribution $\m$ equidistributed on the
finite symmetric set $S\HF HS$)
\[
\tilde{X}_n=h_1s_1h'_1
f_1 h_2s_2h'_2
f_2\cddots h_ns_nh'_nf_n.
\]
It induces in particular a random walk $X_n$ on the base group
$D_\infty$ given by
\[
X_n=h_1s_1h'_1
h_2s_2h'_2\cddots
h_ns_nh'_n=r_1r_2
\cddots r_n
\]
with $r_i=h_is_ih'_i$. The value of $\tilde{X}_n$ in $F \wr_{D_\infty
} D_\infty$ is given by the value of $X_n$ in $D_\infty$
together\vspace*{2pt}
with a function $\F_n\dvtx D_\infty\rightarrow F$ the support of which is
included in $\{r_1^{-1},(r_1r_2)^{-1},\ldots,(r_1r_2\cddots r_n)^{-1}\}$,
since at time $i$ the lamp in position $(r_1r_2\cddots r_i)^{-1}$ is modified.

With obvious identification of $S,H,F$, denote $Z_n$ the similar random
walk on $F \wr_{\partial T_2} G(S,H)$,
\[
Z_n=h_1s_1h'_1
f_1 h_2s_2h'_2
f_2\cddots h_ns_nh'_nf_n.
\]
The value of $Z_n$ is given by the value of $X_n$ in $G(S,H) \simeq
D_\infty$ and a function $\varphi_n\dvtx\partial T_2 \rightarrow F$ with
support included in the active boundary set $\{1^\infty
r_1^{-1},\allowbreak1^\infty(r_1r_2)^{-1},\ldots,1^\infty(r_1r_2\cddots r_n)^{-1}\}
$ by Proposition \ref{counta}.

Now the Schreier graph $1^\infty G(S,H)$ can easily be described as a
semi-line. If $w$ is a reduced representative word in $D_\infty
=\langle s,h|s^2=h^2=1\rangle$, then $1^\infty hw=1^\infty w$ and
$1^\infty w \neq1^\infty w'$ if $w \neq w'$ and both $w$ and $w'$
start with $s$, so there is a canonical 2-covering application
$c\dvtx \operatorname{Cay}(D_\infty,\{s,h\})\twoheadrightarrow1^\infty G(S,H)$ with
$c(w)=c(hw)$. This implies
\[
\varphi_n(x)=\sum_{y \in c^{-1}(x)}
\F_n(y),
\]
because $F$ is Abelian, so that the order of increments does not
influence the sum. This shows that if $\tilde{X}_n= 1$ in $F \wr_{D_\infty}D_\infty$ then $Z_n=1$ in $F \wr_{\partial T_2} G(S,H)$,
hence $P(Z_n=1) \geq P(\tilde{X}_n=1)$ and the group $F \wr_{\partial
T_2} G(S,H)$ is a quotient of $F \wr_{D_\infty}D_\infty$ with
identification of the generators, so $\|\tilde{X}_n\| \leq\|Z_n\|$.
\end{pf*}

\section{Higher order oscillations}\label{highorder}

This section aims at proving Theorem \ref{MTT} and treating the case
$\b=1$ in Theorem \ref{MT}. The following construction is designed to
obtain groups $\D(S',H'F')$ that resemble\vadjust{\goodbreak} $\G(S,\HF)$ at some scales
and nonamenable groups at other scales. They are still directed groups
of a rooted tree but of unbounded valency $\bbar d$ (in the cases of
interest here) and not saturated.

The construction generalizes \cite{KP}, where a group $\D(S',H'_\o)$
is constructed given an Aleshin--Grigorchuk group $\G(S,H_\o)$ and
additional data. Theorem \ref{RPE} and Corollary \ref{cordrift} allow
us to show that some of the groups $\D=\D(S',H'_\o)$ or almost
equivalently some of the piecewise automatic groups of \cite{Ers2} satisfy
\[
\underline{p}(\D)\leq\tfrac{1}{2},\qquad \overline{p}(\D)=1
\quad\mbox{and}\quad
\underline{\delta}(\D)\leq\tfrac{3}{4},\qquad \overline{\delta }(\D)=1.
\]

The description of the construction in \cite{KP} is more algebraic,
whereas the point of view adopted here is in terms of automorphisms of
an ambiant tree~$T_{\bar e}$.

\subsection{\texorpdfstring{Definition of $\D(S',H'F')$}{Definition of Delta(S',H'F')}} Given a bounded sequence
$\bbar d$ and a saturated directed finitely generated group $\G=\G
(S,\HF)<\operatorname{Aut}(ET_{\bar d})$, construct a modification $\D$ of this group,
acting on an extended spherically homogeneous rooted tree $ET_{\bar e}$
for another sequence $\bbar e=(e_l)_{l \in\N}$, with $e_l=d_l+d'_l$
for some $d'_l \geq0$, possibly $d'_l=\infty$. Note that there is a
canonical inclusion $ET_{\bar d} \subset ET_{\bar e}$, and hence a
canonical embedding,
\[
\operatorname{Aut}(ET_{\bar d}) \hookrightarrow \operatorname{Aut}(ET_{\bar e}).
\]

Corresponding to the group $\G(S,\HF)$, determined by the finite groups
$S,H,F$ and the portraits of their elements, that is, their realization
in $\operatorname{Aut}(ET_{\bar d})$, construct a new group $\D(S',H'F') <
\operatorname{Aut}(ET_{\bar e})$, where $S \simeq S'$, $H \simeq H'$ and $F \simeq F'$
as abstract groups. Define the generators via the wreath product
isomorphism of Proposition \ref{CEpourET}.

\begin{longlist}[(4)]
\item[(1)] The element $s'$ in $S'$ corresponding to $s \in S \simeq
S'$ has the form
\[
s'=(1,\ldots,1)s',
\]
where $s'$ is a permutation of $\{1,\ldots,d_0,d_0+1,\ldots,d_0+d'_0\}$
respecting the decomposition $\{1,\ldots,d_0\}
\sqcup\{d_0+1,\ldots,d_0+d'_0\}$, so that $s'|_{\{1,\ldots,d_0\}}=s \in
S=S_{d_0} <
S_{e_0}$ (canonical inclusion) and moreover $s''=s's^{-1}=s'|_{\{
d_0+1,\ldots,d_0+d'_0\}}$ is chosen so that $s \mapsto s''$ is a
morphism of groups from $S \simeq S'$ into the permutation group $S_{\{
d_0+1,\ldots,d_0+d'_0\}}\simeq S_{d'_0}$. Denote by $S''$ its image.

\item[(2)] The element $h'$ in $H'$ corresponding to $h=(h_1,\s_2,\ldots,\s_{d_0}) \in H \simeq H'$ has the form
\[
h'=\bigl(h_1',\s'_2,
\ldots,\s'_{d_0},1,\ldots,1\bigr)h'',
\]
where $\s'_2,\ldots,\s'_{d_0} \in S'_1$ are defined as above,
corresponding, respectively, to $\s_2,\ldots,\s_{d_0} \in S_1$, $h_1'
\in H'_1$ corresponds to $h_1 \in H_1$, and $h''$ is a permutation in
$S_{d_0+d_0'}$ with support included in $\{d_0+1,\ldots,d_0+d'_0\}$ so
that $h \mapsto h''$ is a morphism of groups with image $H'' < S_{d_0'}$.

Elements of $H'$ do not act at the boundary $\partial T_{\bar e}$.

\item[(3)] The element $f'$ in $F'$ corresponding to $\varphi_f \in F
\simeq F'$ has the form
\[
f'=\bigl(f'_1,1,\ldots,1
\bigr)f'',
\]
where $f'_1 \in F'$ corresponds to $\varphi_f \in\G(S_1,H_1F)$, and
$f''$ is a permutation in $S_{d_0+d_0'}$ with support included in $\{
d_0+1,\ldots,d_0+d'_0\}$ so that $f \mapsto f''$ is a morphism of
groups with image $F''<S_{d_0'}$.

The element $f'$ acts at the boundary $\partial T_{\bar e}$ by $\varphi
(x)=1_F$ if $x \neq1^\infty$ and \mbox{$\varphi(1^\infty)=f$}.

\item[(4)] Any two elements $f''$ and $h''$ in
$S_{\{d_0+1,\ldots,d_0+d'_0\}}$ commute.
\end{longlist}

Note that similarly to the definition of $h \in \operatorname{Aut}(T_{\bar d})$ in
Section \ref{directedgroups}, the definitions of $h'$ and $f'$ are
recursive for they involve the generators $h_1'$ and $f_1'$ of the
group $\D(S_1',H_1'F_1')<\operatorname{Aut}(ET_{\s\bar e})$ associated to the
saturated directed group $\G(S_1,H_1F)<\operatorname{Aut}(ET_{\bar e})$.

Condition (4) implies by recursion that at any level $l$ the elements
$h''_l$ and $f''_l$ in $S_{\{d_l+1,\ldots,d_l+d'_l\}}$ commute, so the
subgroup $\langle\{s_l''\},\{h_l''\},\{f_l''\} \rangle< S_{d'_l}$ is
a quotient of the free product of finite groups $S'_l \ast(H_l' \times
F_l') \simeq S_l \ast(H_l \times F)$, possibly an infinite quotient in
the case $d_l'= \infty$. Denote $S_l''=\{s_l''\},H_l''=\{h_l''\}
,F_l''=\{f_l''\}$. They are subgroups of $S_{\{d_l+1,\ldots,d_l+d'_l\}}$.

By recursion, the action of the generators $s',h',f'$ is well defined
on the whole tree $T_{\bar e}$. Moreover, only generators $f'$ act
nontrivially at the boundary $\partial T_{\bar e}$; thus the action of
$s',h',f'$ on $ET_{\bar e}$ is well defined.

To summarize, the group $\D(S',H'F')$ is defined by $\G(S,\HF)$ and a
collection of groups $\langle S''_l,H''_lF''_l\rangle< S_{d'_l}$ that
are quotients of the free products of finite groups $S_l \ast H_lF$ by
identification of generators; see Figure~\ref{fig}.
%
\begin{figure}[b]

\includegraphics{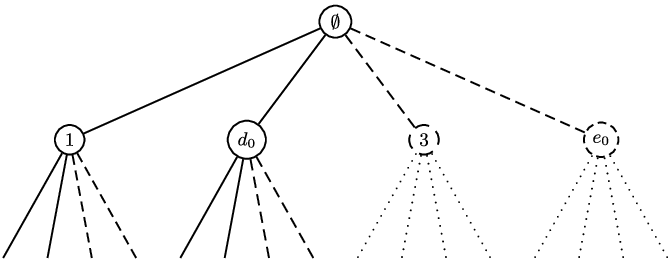}

\caption{The tree $T_{\bar e}$, for $d_0=d_1=2$ and $e_0=e_1=4$, with
the subtree $T_{\bar d}$ in plain edges, dashed edges where $\langle
S''_l,H''_lF''_l \rangle<S_{d'_l}$ act, and dotted edges where the
action of $\D$ is trivial.}
\label{fig}
\end{figure}

\subsection{\texorpdfstring{Elementary properties of $\D(S',H'F')$}{Elementary properties of Delta(S',H'F')}}
Note that $\D (S',H'F')$ is directed but not saturated. As a shortcut, write $\G_l=\G(S_l,H_l F) <
\operatorname{Aut}(ET_{\s^l \bar d})$ and $\D_l=\D (S'_l,H'_lF'_l)<
\operatorname{Aut}(ET_{\s^l \bar e})$.
%
\begin{properties}\label{deltaproperties}
\textup{(1)} The canonical isomorphism of Proposition \ref{CEpourET}
induces a canonical embedding
\[
\D\bigl(S',H',F'\bigr) \hookrightarrow
\D\bigl(S'_1,H'_1,F'_1
\bigr) \wr\bigl\langle S',H'',F''
\bigr\rangle.
\]
More generally, $\D_l \hookrightarrow\D_{l+1} \wr\langle
S_l',H_l'',F_l'' \rangle$ for any $l$.

\textup{(2)} Any two elements $h' \in H'$ and $f' \in F'$ commute.

\textup{(3)} The group $\G(S,H,F)<\operatorname{Aut}(ET_{\bar d})$ is a quotient of $\D
(S',H',F')<\operatorname{Aut}(ET_{\bar e})$.
\end{properties}

Property (2) shows that $\langle H',F' \rangle\simeq H' \times F'$.
Thus as canonical generating set of $\D(S',H'F')$, use $S' \sqcup
H'F'$.
\begin{pf*}{Proof of Properties \ref{deltaproperties}}
(1) is obvious by construction. For (2), compute (recall the support of
$f''$ and $h''$ is in $\{d_0+1,\ldots,d_0+d_0'\}$)
\[
\bigl[h',f'\bigr]=\bigl(\bigl[h_1',f_1'
\bigr],\bigl[\s_2',1\bigr],\ldots,\bigl[
\s_{d_0}',1\bigr],1,\ldots,1\bigr) \bigl[h'',f''
\bigr]=\bigl(\bigl[h_1',f_1'
\bigr],1,\ldots,1\bigr).
\]
By recursion, this shows that $[h',f']$ has a trivial action on
$T_{\bar e}$. Moreover, the support of the corresponding function
$\partial T_{\bar e} \rightarrow F$ is contained in $1^\infty$ where
the value is $[1,f]=1_F$. This shows $[h',f']=_\D1$.

For (3), observe that all the permutations involved in the description
of $S',H',F'$ respect the decomposition $\{1,\ldots,d_l\}\sqcup\{
d_l+1,\ldots,d_l+d_l'\}$ for all $l \in\N$, so that the subset
$ET_{\bar d} \subset ET_{\bar e}$ is stable under the action of $\D
(S',H',F')$. The quotient action is precisely given by $\G(S,\HF)$.
\end{pf*}

Observe that if $d_l=0$ for all $l \geq1$, then $\D(S',H'F')\simeq\G
(S,\HF) \times\langle S'',\allowbreak H''F'' \rangle$. In particular, if $d_l'=0$
for all $l \in\N$, then $\D(S',H'F')=\G(S,\HF)$.

A word $w(S,\HF)$ is an ordered product of generators $s$ in $S$, $h$ in
$H$ and $\varphi_f$ in $F$ of the group $\G$. By replacing $s$ by
$s'$, $h$ by $h'$ and $\varphi_f$ by $f'$, one naturally obtains a
word $w(S',H'F')$ in the generators of $\D$. The rewriting process of
Proposition~\ref{rp} applies to the groups $\D(S',H'F')$. More precisely:

\begin{proposition}[{[Rewriting process for groups $\D(S',H'F')$]}]\label{rp2}
If the rewriting process in the group $\G\hookrightarrow\G_1 \wr
S_{d_0}$ provides
\[
w(S,\HF)=\bigl(w^1(S_1,H_1F),
\ldots,w^{d_0}(S_1,H_1F)\bigr)\s_\varnothing,
\]
then the rewriting process in the group $\D\hookrightarrow\D_1 \wr
S'$ provides
\[
w\bigl(S',H'F'\bigr)=
\bigl(w^1\bigl(S_1',H_1'F_1'
\bigr),\ldots,w^{d_0}\bigl(S_1',H_1'F_1'
\bigr),1,\ldots,1\bigr) w\bigl(S',H''F''
\bigr),
\]
where $w(S',H''F'')=\s_\varnothing w(S'',H''F'')$ with $\s_\varnothing
\in S_{\{1,\ldots,d_0\}}$, and $w(S'',H''F'')$ takes values in $S_{\{
d_0+1,\ldots,d_0+d_0'\}}$.
\end{proposition}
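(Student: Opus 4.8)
The plan is to carry out the rewriting process of proposition \ref{rp} on the primed word $w(S',H'F')$ inside $\D\hookrightarrow\D_1\wr S_{e_0}$ (the canonical isomorphism of proposition \ref{CEpourET}) and to compare it, coordinate by coordinate, with the rewriting of $w(S,HF)$ in $\G\hookrightarrow\G_1\wr S_{d_0}$. The whole argument rests on a ``block independence'': by construction every permutation entering the definition of $S',H',F'$ respects the decomposition $\{1,\dots,d_0\}\sqcup\{d_0+1,\dots,d_0+d_0'\}$, so the coordinates $1,\dots,d_0$ evolve exactly as for $\G$ (with each generator replaced by its primed version), while the block $\{d_0+1,\dots,d_0+d_0'\}$ carries an independent copy of the same formal word evaluated on the double-primed generators.

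First I would read off from the construction the wreath images of the generators of $\D$: $s'=(1,\dots,1)s'$ with $s'=s\,s''$, where $s\in S_{\{1,\dots,d_0\}}$ is the canonical image of $s$ and $s''\in S_{\{d_0+1,\dots,e_0\}}$; and, for a generator $k\in HF$ with wreath image $(\kappa,b_2,\dots,b_{d_0})$ (where $\kappa\in H_1F$, $b_t\in S_1$ is rooted, and there is no rooted part at level $0$), the corresponding generator $k'\in H'F'$ has wreath image $(\kappa',b_2',\dots,b_{d_0}',1,\dots,1)k''$, where $\kappa'\in H_1'F_1'\subset\D_1$ and $b_t'\in S_1'$ are the primed versions of $\kappa$ and $b_t$, the components over $\{d_0+1,\dots,e_0\}$ are trivial, and the rooted part $k''$ lies in $S_{\{d_0+1,\dots,e_0\}}$. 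Since $s\mapsto s$, $s\mapsto s''$ and the analogous maps on $HF$ are group homomorphisms whose images have disjoint supports, the partial products $\s_i'=s_1'\cdots s_i'$ split as $\s_i'=\s_i\s_i''$ with $\s_i=s_1\cdots s_i$ (precisely the permutation occurring in the rewriting of $w$ in $\G$, proposition \ref{rp}(1)) and $\s_i''=s_1''\cdots s_i''$; in particular each $\s_i'$ respects the decomposition and acts as $\s_i$ on $\{1,\dots,d_0\}$.

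Then I would write $w(S',H'F')=(k_1')^{\s_1'}\cdots(k_n')^{\s_n'}\s_{n+1}'$ and compute each coordinate. Conjugating $k_j'$ by the rooted permutation $\s_j'$ permutes its subtree components by $\s_j'$ and turns its rooted part into $\s_j'\,k_j''\,(\s_j')^{-1}=\s_j''\,k_j''\,(\s_j'')^{-1}$, which is still supported on $\{d_0+1,\dots,e_0\}$ (using that $\s_j$, supported on the complementary block, commutes with this element). Hence for $t\le d_0$ the $t$-th subtree component of $(k_j')^{\s_j'}$ is the primed version of the $t$-th component of $k_j^{\s_j}$; for $t>d_0$ it is trivial; and every rooted part fixes $\{1,\dots,d_0\}$ pointwise. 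Multiplying the $n+1$ factors, for $t\le d_0$ the $t$-th coordinate is the product of the primed $t$-th components of $k_1^{\s_1},\dots,k_n^{\s_n}$, hence the primed version of the $t$-th component of $k_1^{\s_1}\cdots k_n^{\s_n}$; since the factors $k_j^{\s_j}$ have trivial rooted part in $\G$, that component is exactly the word $w^t$ of proposition \ref{rp}, and priming commutes with passage to the canonical alternate form (the alternate pattern depends only on the formal word, and priming is a homomorphism on each of $S_{d_1}$ and $H_1F$), so the $t$-th coordinate of $w(S',H'F')$ equals $w^t(S_1',H_1'F_1')$; the coordinates $t>d_0$ are all trivial. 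Finally, the rooted part of $w(S',H'F')$ equals $\bigl(\prod_{j=1}^n\s_j''\,k_j''\,(\s_j'')^{-1}\bigr)\s_{n+1}'$; writing $\s_{n+1}'=\s_{n+1}\s_{n+1}''$ and pulling $\s_{n+1}\in S_{\{1,\dots,d_0\}}$ past the rest (all supported on the complementary block) gives $\s_{n+1}\cdot(k_1'')^{\s_1''}\cdots(k_n'')^{\s_n''}\s_{n+1}''=\s_\emptyset\,w(S'',H''F'')$, where $\s_\emptyset=\s_{n+1}$ is the rooted part of the rewriting of $w$ in $\G$ and $w(S'',H''F'')$ is the word $w$ evaluated on the double-primed generators --- which is exactly the claimed decomposition.

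The computation is routine bookkeeping in the permutational wreath product; the only point where the hypotheses of the construction genuinely enter is the block independence, namely that $s'',h'',f''$ and every conjugate $\s_j''\,k_j''\,(\s_j'')^{-1}$ arising along the way stay supported inside $\{d_0+1,\dots,d_0+d_0'\}$ and never interact with the coordinates $1,\dots,d_0$. This is exactly what the requirement that all the defining permutations respect the decomposition $\{1,\dots,d_0\}\sqcup\{d_0+1,\dots,d_0+d_0'\}$ (together with the commutation condition (4)) provides, and it is the natural place to take care. With this in hand, the coordinates $1,\dots,d_0$ reproduce verbatim the rewriting of $w$ in $\G$ with primed generators, and the rooted part factors off the independent word $w(S'',H''F'')$.
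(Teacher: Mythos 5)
Your proof is correct and follows essentially the same route as the paper's: pass to the conjugated form $(k_1')^{\s_1'}\cdots(k_n')^{\s_n'}\s_{n+1}'$, use the splitting $\s_j'=\s_j\s_j''$ into blocks with disjoint supports to see that the coordinates $1,\dots,d_0$ reproduce the primed rewriting of $w$ in $\G$ while the rooted parts assemble into $\s_\emptyset\,w(S'',H''F'')$. Your explicit remarks that the conjugated rooted parts $\s_j''k_j''(\s_j'')^{-1}$ stay supported in $\{d_0+1,\dots,d_0+d_0'\}$ and that priming commutes with passage to the canonical alternate form make precise points the paper leaves implicit.
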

\begin{pf}
Denote $w(S,\HF)=s_1h_1 \varphi_{f_1}s_2\cddots h_n\varphi_{f_n}
s_{n+1}$. Then
\[
w\bigl(S',H'F'\bigr)=s'_1h'_1f'_1s'_2,
\ldots, h'_nf'_ns'_{n+1},
\]
where $s \mapsto s'$, $h \mapsto h'$ and $\varphi_f \mapsto f'$ are
given in the definition of generators of $\D$. These forms are
equivalent to
\begin{eqnarray*}
w(S,\HF)&=&(h_1 \varphi_{f_1})^{\s_1}
\cddots(h_n \varphi_{f_n})^{\s_n}\s_{n+1}
\qquad\mbox{for } \s_i=s_1\cddots s_i,
\\
w\bigl(S',H'F'\bigr)&=&
\bigl(h'_1 f'_1
\bigr)^{\s'_1}\cddots\bigl(h_n f'_n
\bigr)^{\s'_n}\s'_{n+1}\qquad \mbox{for }
\s'_i=s'_1\cddots
s'_i.
\end{eqnarray*}

For $h \varphi_f=(h_1\varphi_{f},b_2,\ldots,b_{d_0})$, one has
\[
(h \varphi_f)^\s=(b_{\s^{-1}(1)},
\ldots,h_1\varphi_{f},\ldots,b_{\s
^{-1}(d_0)})
\]
with $h_1\varphi_{f}$ in position $\s(1)$.

Correspondingly, for $h'f'=(h'_1f'_1,b'_2,\ldots,b'_{d_0},1,\ldots,1)
h''f''$, one has
\[
\bigl(h'f'\bigr)^{\s'}=
\bigl(b'_{\s^{-1}(1)},\ldots,h'_1f'_1,
\ldots,b'_{\s
^{-1}(d_0)},1,\ldots,1\bigr) \bigl(h''f''
\bigr)^{\s''}
\]
with $h'_1f'_1$ in position $\s^{-1}(1)=\s'^{-1}(1)$, because $\s'=\s\s''$ with $\s\in S_{\{1,\ldots,d_0\}}$ and $\s'' \in S_{\{
d_0+1,\ldots,d_0+d'_0\}}$.

As $(h''f'')^{\s''}$ acts trivially on $\{1,\ldots,d_0\}$, one can
compute products (i.e., words) componentwise, which proves the proposition.
\end{pf}
%
\begin{remark}
Proposition \ref{rp2} shows in particular that the minimal tree
description of Section \ref{mta} and Figure \ref{fig1} for a word
$w(S,\HF)$ in $\G(S,\HF)$ remains valid for the word $w(S',H'F')$ in $\D
(S',H'F')$ with the same tree $T(w)<T_{\bar d}<T_{\bar e}$, but for a
vertex $u$ in level $l$, the permutation $\s_u$ now takes values in
$S_l \times\langle S''_l,H''_lF''_l\rangle<S_{e_l}$.
\end{remark}

\subsection{Localization} In the Cayley graph of $\D(S',H'F')$, a
ball of given radius depends only on the description of the action on a
neighborhood of the root of the tree.
%
\begin{proposition}\label{local}
The ball $B_\D(R)$ in the Cayley graph of $\D(S',H'F')$ with respect
to the generating set, $S' \sqcup H'F'$ depends only on the $L=1+\log_2 R$ first levels in the recursive description of the generators of
$\D(S',H'F')$ and the $2R+1$-balls in the groups $\langle
S''_l,H''_lF''_l \rangle< S_{d_l'}$ for $l \leq L$.
\end{proposition}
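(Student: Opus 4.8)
The plan is to argue that the value in $\D(S',H'F')$ of any word $w$ of length $\leq R$ in the generators $S' \sqcup H'F'$ is completely determined by the finite data listed in the statement, via the rewriting process of proposition \ref{rp2}. First I would recall that by proposition \ref{rp2}, applying the rewriting process to $w$ produces, at each vertex $v$ of the minimal tree $T(w) \subset T_{\bar d} \subset T_{\bar e}$, a short word $w^v$ together with permutations $\s_u$ at non-active vertices $u$; by the remark following proposition \ref{rp2}, for $u$ at level $l$ this $\s_u$ takes values in $S_l \times \langle S_l'',H_l''F_l'' \rangle < S_{e_l}$. Since $w$ has length $\leq R$, the minimal tree $T(w)$ has depth at most $\log_2 R$, so only the first $L = 1 + \log_2 R$ levels of the recursive description of the generators are ever invoked to compute all the $w^v$ and $\s_u$. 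This already shows that the action of $w$ on the tree $ET_{\bar e}$ — and hence the equality type of $w$ with any other short word, as far as the tree action and boundary function are concerned — depends only on the first $L$ levels of the generator description.

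The remaining point is that the $S_l''$, $H_l''$, $F_l''$-components of the $\s_u$ are elements of the groups $\langle S_l'',H_l''F_l'' \rangle < S_{d_l'}$, which may be infinite (when $d_l' = \infty$), so knowing these components as abstract group elements requires controlling their word length. Here I would bound the relevant word length: each $\s_u$ at level $l$ is, by the rewriting process, a product of at most $m_u \leq \frac{n}{2^l}+1 \leq R+1$ factors coming from the $S_l''$- and $H_l''F_l''$-parts of the generators seen at that vertex (each original factor of $w$ contributes at most one term to the component at a given vertex), so the $\langle S_l'',H_l''F_l'' \rangle$-valued word has length at most something like $2R+1$. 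Hence these components lie in the ball of radius $2R+1$ in $\langle S_l'',H_l''F_l'' \rangle < S_{d_l'}$, and two such products represent the same element of $\D$ iff they represent the same element in that ball. Combining: whether two words of length $\leq R$ are equal in $\D(S',H'F')$ is decided by the minimal-tree data (depth $\leq L$, so only the first $L$ generator levels matter) together with equality of finitely many products of length $\leq 2R+1$ in the groups $\langle S_l'',H_l''F_l'' \rangle$ for $l \leq L$; this is exactly the claimed dependence, and it identifies $B_\D(R)$.

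The main obstacle I anticipate is the bookkeeping in the second step: making the length bound $2R+1$ for the $\langle S_l'',H_l''F_l'' \rangle$-components precise, i.e. checking that the rewriting process does not blow up the number of $H''/F''$-type factors accumulated at a single vertex beyond a linear function of $R$. This is essentially the same mechanism as point (2) of proposition \ref{rp} (each factor $k_j$ of $w$ donates its ``$k_j'$-part'' to exactly one coordinate) applied to the new $H''F''$-coordinate introduced by the construction, together with the fact that at vertices of positive depth the lengths $m_v$ only decrease; once this is in hand the conclusion is immediate. One should also note that elements of $H'$ do not act at $\partial T_{\bar e}$ and only $f'$-generators do, with boundary support in $1^\infty$, so the boundary function of $w$ is likewise governed by the first $L$ levels; this is a routine consequence of the construction and adds nothing essential to the argument.
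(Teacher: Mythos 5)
Your proposal is correct and follows essentially the same route as the paper: the paper's proof is an algorithmic version of your argument, testing the oracle $w=_\D 1$ for words of length $\leq 2R+1$ level by level via the rewriting process of proposition \ref{rp2}, using the halving of word length at each level to bound the relevant depth by $\log_2$ of the length, and observing that the component at each vertex of level $l$ is a word of length at most $2R+1$ in $\langle S''_l,H''_lF''_l\rangle$, hence determined by the stated balls. The bookkeeping you flag as the main obstacle is handled in the paper exactly as you anticipate, by propositions \ref{rp} and \ref{rp2}.
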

\begin{pf}
The ball $B_\D(R)$ can be drawn if one can test (algorithmically) the
oracle $w=_\D1$ for any given word $w$ in $S' \sqcup H'F'$ of length
$\leq r=2R+1$. To test such an oracle, use the following algorithm.

First test the value of the permutation induced by $w$ at the root,
given by $\F(w(s',h'f'))=w(s',h''f'')$, where $\F\dvtx \operatorname{Aut}(ET_{\bar e})
\rightarrow S_{e_0}$ is the root evaluation. This test depends only on
the zero level in $\D(S',H'F')$ and the $|w|$-ball in $\langle
S''_0,H''_0F''_0 \rangle< S_{d_0'}$.\vspace*{2pt}

If $\F(w) \neq_{S_{e_0}} 1$, then $w \neq_{\D} 1$. If $\F(w)
=_{S_{e_0}} 1$, then $w$ fixes all vertices in the first level of
$T_{\bar e}$. By Propositions \ref{rp} and \ref{rp2} of the rewriting
process, one\vspace*{1pt} has $w=(w^1,\ldots,w^{d_0},1,\ldots,1)$ in the wreath
product with $|w^t| \leq\frac{|w|+1}{2} \leq\frac{r+1}{2}$.

Test for each $t$ in $\{1,\ldots,e_0\}$, the permutation induced by
$w^t$ at the root $\F(w^t(s_1',h_1'f_1'))=w^t(s_1',h_1''f_1'')$, which
depends only on the zero level in $\D(S'_1,H'_1F'_1)$ hence on the
first level in $\D(S',H'F')$ and the $|w^t|$-ball in $\langle
S''_1,\allowbreak H''_1F''_1 \rangle< S_{d_1'}$.

If $\F(w^t) \neq_{S_{e_1}} 1$, then $w \neq_{\D} 1$. If $\F(w^t)
=_{S_{e_0}} 1$ for all $t$, then $w$ fixes all vertices in the two
first levels of $T_{\bar e}$. By Propositions \ref{rp} and \ref{rp2}
of the rewriting process, $w^t=(w^{t1},\ldots,w^{te_0},1,\ldots,1)$ in
the wreath product with $|w^{ts}| \leq\frac{|w^t|+1}{2} < \frac{r}{4}+1$.

Continue the process and test the value at the root of the words
$w^{t_1\cddots t_l}$ while their length is $\geq1$. This test depends
only on the $l$ first levels in $\D(S',H'F')$ and the $|w^{t_1\cddots
t_l}|$-ball in $\langle S''_l,H''_lF''_l \rangle< S_{d_l'}$. If $\F
(w^{t_1\cddots t_l}) \neq_{S_{e_l}} 1$ for some $t_1\cddots t_l$, then $w
\neq_\D1$.

For $L=\log_2 r$, one has $|w^{t_1\cddots t_L}|< \frac{r}{2^L}+ 1=2$,
so $w^{t_1\cddots t_L}$ is a generator in $\D_L(S_L',H_L'F_L')$. This
implies that if $\F(w^{t_1\cddots t_l})=_{S_{e_l}} 1$ for all $t_1\cddots
t_l$, $l \leq L-1$ and $w^{t_1\cddots t_L}=_{\D_L}1$, then $w=_\D1$.

The algorithm allows us to test the oracle $w=_\D1$ using only the
data in the $L=\log_2 R$ first levels of $\D(S',H'F')$ and the $\frac
{r}{2^l}$-ball in the group $\langle S''_l,H''_lF''_l \rangle<
S_{d_l'}$ for $l \leq\log_2 r$.
\end{pf}

\subsection{Asymptotic properties} The asymptotic description of $\D
(S',H'F')$ is well understood in two extreme cases.
%
\begin{proposition}[(Low asymptotic)]\label{LA}
If $d'_l=0$ for $l \geq L+1$ and $d'_l$ finite for $l \leq L$, the
quotient homomorphism (of restriction of the action to $ET_{\bar d}
\subset ET_{\bar e}$)
\begin{eqnarray*}
f\dvtx\D\bigl(S',H'F'\bigr)
&\rightarrow& \G(S,\HF),
\\
s' & \mapsto& s,
\\
h' & \mapsto& h,
\\
f' & \mapsto& \varphi_f
\end{eqnarray*}
has finite kernel.

In particular, for the random walks $Y_n$ in $\G(S,\HF)$ of law $\m$
equidistributed on $S\HF S$ and the associated random walk $Y'_n$ in $\D
(S',H'F')$ of law $\m'$ equidistributed on $S'H'F'S'$, there exists
$C,K >0$ such that for all~$n$:
\begin{longlist}[(3)]
\item[(1)] $P(Y_n'=_\D1) \leq P(Y_n =_\G1) \leq C P(Y'_n=_\D1)$;
\item[(2)] $L_{\G,\m}(n) \leq L_{\D,\m'}(n) \leq L_{\G,\m}(n)+K$;
\item[(3)] $H_{\G,\m}(n) \leq H_{\D,\m'}(n) \leq H_{\G,\m
}(n)+\log C $.
\end{longlist}
\end{proposition}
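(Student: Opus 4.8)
The plan is twofold: first show that $N:=\ker f$ is a finite group, then deduce the three estimates as standard facts about a surjection with finite kernel that pushes the step distribution forward correctly. For the finiteness of the kernel I will work level by level, writing $N_l:=\ker(f_l\colon\D_l\to\G_l)$ for the analogous restriction morphism on the shifted groups, so that $N=N_0$. Two structural remarks do the job. First, since $d'_l=0$ for $l\geq L+1$, the observation following Properties~\ref{deltaproperties} (applied to the data shifted by $L+1$ levels) gives $\D_l=\G_l$, hence $N_l=\{1\}$, for all $l\geq L+1$. Second, every element of $\D_l$ has trivial ``extra coordinates'': under the canonical embedding $\D_l\hookrightarrow\D_{l+1}\wr\langle S'_l,H''_lF''_l\rangle$ of Properties~\ref{deltaproperties}(1), all coordinates with index in $\{d_l+1,\dots,e_l\}$ equal $1\in\D_{l+1}$. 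This holds for the generators $s',h',f'$ directly from their recursive definitions, and it is stable under multiplication, because every element of $Q_l:=\langle S'_l,H''_lF''_l\rangle$ respects the partition $\{1,\dots,d_l\}\sqcup\{d_l+1,\dots,e_l\}$, so the wreath-product multiplication in $\D_{l+1}\wr Q_l$ never carries a non-trivial coordinate into an extra slot. Consequently $\D_l$ embeds into $\D_{l+1}^{\,d_l}\rtimes Q_l$, and $Q_l<S_{e_l}$ is a \emph{finite} group for $l\leq L$ since $e_l=d_l+d'_l<\infty$.

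Unravelling the recursive definition of the generators shows that, under these embeddings, $f_l$ acts coordinatewise by $f_{l+1}$ on the $\D_{l+1}^{\,d_l}$ factor and by the restriction homomorphism $\pi_l\colon Q_l\to S_l$, $q\mapsto q|_{\{1,\dots,d_l\}}$, on the permutation part. Hence an element of $N_l$ has its $d_l$ regular coordinates in $N_{l+1}$ and its permutation part in $\ker\pi_l$, so $\#N_l\leq(\#N_{l+1})^{d_l}\cdot\#\ker\pi_l<\infty$ (using that $Q_l$, hence $\ker\pi_l$, is finite). A descending induction starting from $N_{L+1}=\{1\}$ then yields $\#N=\#N_0<\infty$, proving the first assertion.

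For the three comparisons, note that with the canonical generating sets $S\sqcup HF$ and $S'\sqcup H'F'$ the map $f$ sends generators bijectively onto generators and restricts to a group isomorphism on each of $S'$, $H'$ and $F'$; writing $Y'_n=s'_1k'_1\cdots s'_nk'_ns'_{n+1}$ as an alternate random word, it follows that $f(Y'_n)$ is distributed exactly as $Y_n$. For (3), the chain rule gives $H(Y'_n)=H(f(Y'_n))+H(Y'_n\mid f(Y'_n))$; the conditional term is at most $\log\#N$ since every fibre of $f$ has $\#N$ elements, and $H(f(Y'_n))\leq H(Y'_n)$ because $f$ is a function, whence $H_{\G,\m}(n)\leq H_{\D,\m'}(n)\leq H_{\G,\m}(n)+\log\#N$. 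For (2), any word representing $\g\in\D$ maps to a word representing $f(\g)$, so $\|f(\g)\|\leq\|\g\|$, while a geodesic word for $f(\g)$ lifts to a word of the same $\D$-length whose value differs from $\g$ by an element of $N$, giving $\|\g\|\leq\|f(\g)\|+K$ with $K=\max_{g\in N}\|g\|$; taking expectations gives (2). For (1), $\{Y'_n=1\}\subseteq\{f(Y'_n)=1\}$ gives the left inequality, and $P(Y_n=1)=P(Y'_n\in N)=\sum_{g\in N}P(Y'_n=g)$; since $\m'$ is symmetric with $\m'(1)>0$ one has the uniform bound $P(Y'_n=g)\leq C'P(Y'_n=1)$ for all $g\in N$ and all $n$ (bound $P(Y'_{n+M}=1)$ from below by a fixed constant times $P(Y'_n=g)$ using $M=\max_{g\in N}\|g\|$, and from above by $P(Y'_n=1)/\m'(1)$ via the Cauchy--Schwarz estimate $\sup_h P(Y'_n=h)\leq P(Y'_n=1)/\m'(1)$), so $P(Y_n=1)\leq C\,P(Y'_n=1)$ with $C=C'\#N$.

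The main obstacle is the bookkeeping of the first part: carefully checking that the canonical embeddings $\D_l\hookrightarrow\D_{l+1}\wr Q_l$ keep all non-trivial activity in the first $d_l$ coordinates and that $f_l$ decomposes as ``coordinatewise $f_{l+1}$ together with $\pi_l$''. Once that is set up the descending induction is immediate, and the probabilistic comparisons are routine facts about finite-kernel quotients.
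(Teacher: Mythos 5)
Your proposal is correct and takes essentially the same route as the paper: the kernel is bounded by the same recursive wreath-product counting (trivial extra coordinates, root part in the finite group supported on $\{d_l+1,\dots,e_l\}$, induction terminating at level $L+1$ where $\D_l=\G_l$), and the three comparisons follow from the pushforward $f_*\m'^{\ast n}=\m^{\ast n}$ exactly as in the text, your chain-rule step for entropy being the paper's appeal to conditional entropy (lemma A.4 of \cite{BKN}) in disguise. The only difference is presentational: your justification of $P(Y'_n=g)\leq C'P(Y'_n=1)$ via $\sup_h P(Y'_n=h)\leq P(Y'_n=1)/\m'(1)$ is more detailed than the paper's one-line appeal to maximality of the return probability, which is a welcome precision rather than a change of method.
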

\begin{pf}
An element $\delta$ in the kernel $\ker f=\{\delta\in\D| \delta
|_{ET_{\bar d}}=1\}$ is described by its action on $ET_{\bar e}
\setminus ET_{\bar d}$. By the rewriting process, one can write
$\delta=(\delta_1,\ldots,\delta_{d_0},1,\ldots,1)\delta''$ with
$\delta'' \in S_{d_0+1,\ldots,d_0+d_0'}<S_{e_0}$, for which there are
\mbox{$\leq$}$\#\langle S'',H''F'' \rangle$ choices. In order to describe
$\delta$, t we describe $\delta_1,\ldots,\delta_{d_0}$ that belong
to the kernel $\ker(f_1\dvtx\D_1 \rightarrow\G_1)$.

For each $t \in\{1,\ldots, d_0\}$, the element $\delta_t$ can be
written in the form $\delta_t=
(\delta_{t1},\ldots, \delta_{td_1},1,\ldots,1)\delta_t''$ with $\delta_t'' \in S_{d_1+1,\ldots,
d_1+d_1'}<S_{e_1}$, for which there are \mbox{$\leq$}$\#\langle
S_1'',H_1''F_1'' \rangle$ choices.

By induction, the element $\delta\in\ker f$ is described by
\[
\bigl\{\delta_{t_1\cddots t_l}''|t_i
\in\{1,\ldots,d_i\}, l \leq L\bigr\},
\]
for which the number of possible choices is
\[
C \leq\#\bigl\langle S'',H''F''
\bigr\rangle\bigl(\#\bigl\langle S_1'',H_1''F_1''
\bigr\rangle \bigr)^{d_0} \cddots\bigl(\#\bigl\langle S_L'',H_L''F_L''
\bigr\rangle\bigr)^{d_0\cddots d_{L-1}}.
\]

Denote $f^{-1}(1_\G)=\{\delta_1,\ldots,\delta_C\}$, then
$f^{-1}(\gamma)=\{\delta\delta_1,\ldots,\delta\delta_C\}$ if
$f(\gamma)=\delta$ and
\[
\m^{\ast n}(\gamma)=P(Y_n=_\G\gamma)=\sum
_{i=1}^CP\bigl(Y_n'=_\D
\delta \delta_i\bigr)=\sum_{\delta' \in f^{-1}(\gamma)}
\m'^{\ast n}\bigl(\delta'\bigr).
\]
For $\gamma=1$, this guarantees (1)
\[
P\bigl(Y_n'=_\D1\bigr) \leq
P(Y_n =_\G1) \leq C P\bigl(Y'_n=_\D1
\bigr),
\]
because $P(Y_n=_\D\delta)$ is maximal for $\delta=1_\D$.

One also has $\|\delta\| \leq\|f(\delta)\|+K$ for
$K=\max\{\|\delta_1\|,\ldots,\|\delta_C\|\}$. Indeed, if
$w(S,\HF)=\gamma=f(\delta)$, then $w(S',H'F')=\delta\delta_i$ for some
$i$, and when\vspace*{0.5pt} $\delta_i=w_i(S',H'F')$ of length $\leq K$,
one has $ww_i^{-1}(S',H'F')=\delta $. This shows (2)
\[
\E_{\m^{\ast n}}\|\gamma\| \leq\E_{\m'^{\ast n}}\|\delta\| \leq
\E_{\m^{\ast n}}\|\gamma\|+K.
\]

Now fix $n$, and define for any $\gamma$ in $\G$ the measure with
support in \mbox{$f^{-1}(\gamma) \subset\D$}
\[
\nu_\gamma(\delta)=\cases{ %
\displaystyle \frac{\m'^{\ast n}(\delta)}{\sum_{\delta' \in
f^{-1}(\gamma)}\m'^{\ast n}(\delta')}, &\quad if $f(
\delta)=\gamma$,
\vspace*{2pt}\cr
0, &\quad if $f(\delta)\neq\gamma$.}
\]
Then the measure $\m'^{\ast n}$ decomposes as
\[
\m'^{\ast n}=\sum_{\gamma\in\G}
\m^{\ast n}(\gamma) \nu_\gamma
\]
and by Lemma A.4 in \cite{BKN} on conditionnal entropy,
\[
H\bigl(\m'^{\ast n}\bigr) \leq\sum
_{\gamma\in\G} \m^{\ast n}(\gamma) H(\nu_\gamma) + H
\bigl(\m^{\ast n}\bigr).
\]
The support of $\nu_\gamma$ has size $\leq C$ so $H(\nu_\gamma)
\leq\log C$, which shows (3)
\[
H\bigl(\m^{\ast n}\bigr) \leq H\bigl(\m'^{\ast n}
\bigr) \leq\log C + H\bigl(\m^{\ast n}\bigr).
\]
\upqed
\end{pf}
%
\begin{proposition}[(High asymptotic)]\label{HA}
If there exists $l$ such that $d'_l=\infty$ and
\[
S_\infty> \bigl\langle S''_l,H''_lF''_l
\bigr\rangle\simeq S_l \ast(H_l\times F_l)
\]
is nonamenable, then $\D(S',H'F')< \operatorname{Aut}(ET_{\bar e})$ is nonamenable.

In particular, for the random walk $Y_n$ of law $\m$ equidistributed
on the finite generating set $S'H'F'S'$, there exists $c>0$ such that
for $n$ large enough:
\begin{longlist}[(3)]
\item[(1)] $P(Y_n=_\D1) \leq e^{-cn}$;
\item[(2)] $L_{\D,\m'}(n) \geq cn$;
\item[(3)] $H_{\D,\m'}(n) \geq cn$.
\end{longlist}
\end{proposition}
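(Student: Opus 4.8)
The plan is to show non-amenability of $\D(S',H'F')$ by exhibiting a non-amenable subgroup, and then to invoke Kesten's criterion together with the Guivarc'h/Kaimanovich--Vershik inequalities relating exponential return probability, linear drift and linear entropy. The key observation is that the asymptotic behaviors (1), (2), (3) are all consequences of a single fact: a simple random walk on a non-amenable finitely generated group has return probability decaying exponentially (Kesten), drift growing linearly and entropy growing linearly (these last two follow from positivity of the spectral radius, or directly from the fact that non-amenable groups have positive speed; see \cite{KV}). Since $\m'$ is the equidistribution on $S'H'F'S'$, which is a finite symmetric generating set containing $1_\D$, the random walk $Y_n$ is a genuine symmetric finitely supported simple random walk, so these three classical facts apply verbatim once non-amenability is established. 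Hence the entire content of the proposition reduces to the first sentence.

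To prove non-amenability, I would use the embedding in Properties \ref{deltaproperties}(1), iterated down to level $l$:
$$\D=\D_0 \hookrightarrow \D_1 \wr \langle S_0',H_0''F_0''\rangle \hookrightarrow \dots \hookrightarrow \D_l \wr \langle S_{l-1}',\dots\rangle \wr \dots \wr \langle S_0',H_0''F_0''\rangle,$$
but more simply: the projection of $\D_l$ onto the top permutation group of the wreath decomposition $\D_l \hookrightarrow \D_{l+1}\wr\langle S_l',H_l''F_l''\rangle$ is a surjection onto $\langle S_l'',H_l''F_l''\rangle$, because by construction the generators $s',h',f'$ of $\D_l$ map respectively to $s_l''$, $h_l''$, $f_l''$, which generate $\langle S_l'',H_l''F_l''\rangle$. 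Composing with the canonical surjection $\D=\D_0 \twoheadrightarrow \D_l$ obtained by restricting the action to the appropriate subtree (this is the iterate of Properties \ref{deltaproperties}(3) applied at levels $0,\dots,l-1$, or equivalently reading off the first coordinate repeatedly in the rewriting process of Proposition \ref{rp2}), one obtains a surjective homomorphism $\D(S',H'F') \twoheadrightarrow \langle S_l'',H_l''F_l''\rangle$. By hypothesis the target is the free product $S_l \ast (H_l\times F_l)$ and is non-amenable; since a group admitting a surjection onto a non-amenable group is itself non-amenable, we conclude $\D(S',H'F')$ is non-amenable.

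Once non-amenability is in hand, the three estimates are obtained as follows. For (1), Kesten's theorem \cite{Kes} gives a spectral radius $\rho<1$ for the simple random walk, hence $P(Y_n=_\D 1)\leq \rho^n$, so any $c<|\log\rho|$ works for $n$ large. For (2), since the drift exponent is $\geq 1$ for symmetric finitely supported measures on non-amenable groups — indeed sublinear drift forces amenability by \cite{KV} — one gets $L_{\D,\m'}(n)\geq cn$; alternatively, this follows from (1) via Varopoulos-type arguments, but the cleanest route is the contrapositive of the amenability criterion in \cite{KV}. For (3), likewise sublinear entropy forces amenability \cite{KV}, so $H_{\D,\m'}(n)\geq cn$. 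One should note that (3) also implies (2) via Lemma 6 in \cite{Ers3} (the inequality $c_1 H_{\G,\m}(n)-c_2 \leq L_{\G,\m}(n)$ already quoted in Corollary \ref{cordrift}), so it would in fact suffice to establish non-amenability, then (1) by Kesten, then (3) by \cite{KV}, and deduce (2).

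\textbf{Main obstacle.} There is essentially no hard analytic step here; the proposition is a packaging of standard facts. The one point that requires genuine care is verifying that the composite map $\D(S',H'F')\twoheadrightarrow \langle S_l'',H_l''F_l''\rangle$ is well-defined and surjective — i.e., that restricting the action to the ``extra'' subtree hanging at level $l$ is compatible with the wreath-product structure across all the intermediate levels, and that condition (4) in the construction (commutation of the $f''$ and $h''$) is exactly what guarantees $\langle S_l'',H_l''F_l''\rangle$ is a quotient of $S_l\ast(H_l\times F_l)$ rather than of a larger free product. This bookkeeping is straightforward given Proposition \ref{rp2} and the explicit recursive form of the generators, but it is the only place where the specific geometry of the construction is used, so I would write it out carefully rather than leaving it implicit.
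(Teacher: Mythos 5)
Your reduction of the whole statement to the single claim of non-amenability, followed by Kesten's theorem for (1) and Kaimanovich--Vershik for (2) and (3), is exactly the paper's strategy. The gap is in how you prove non-amenability: the ``canonical surjection $\D=\D_0 \twoheadrightarrow \D_l$'' that you compose with the root evaluation does not exist, or at least is not produced by anything you cite. Properties \ref{deltaproperties}(3) gives the quotient $\D(S',H'F') \twoheadrightarrow \G(S,HF)$ obtained by restricting the action to $ET_{\bar d}\subset ET_{\bar e}$; it does not give a quotient onto $\D_1$, let alone onto $\D_l$. Likewise ``reading off the first coordinate in the rewriting process of Proposition \ref{rp2}'' is multiplicative only on the stabilizer $St_1(\D)$ of the first level: for an element $(\g_1,\dots,\g_{e_0})\s$ with $\s\neq 1$ the first-coordinate map is not a homomorphism, and in a wreath product $A\wr B$ there is a canonical retraction onto the top group $B$ but none onto the base factor $A$. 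Since for $l\geq 1$ the data $\langle S''_l,H''_lF''_l\rangle$ you want to reach sits strictly below the root, it cannot be seen in any quotient of $\D$ obtained this way; you must go through a subgroup, and that is precisely the step where the specific structure of the construction (which you flagged as the delicate point, but resolved by the non-existent quotient) has to be used.

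The paper's proof fills this hole with Fact \ref{inf}: using saturation of $\G(S,HF)$ (transitivity of $S=S_{d_0}$ on $\{1,\dots,d_0\}$ and the fact that $H$ projects onto each factor), every $\g_1\in\D_1$ arises as the first coordinate of an element of $\D$ with trivial rooted part, so the coordinate projection $pr_1\colon St_1(\D)\to\D_1$ is \emph{onto}. Then amenability would pass from $\D$ to its subgroup $St_1(\D)$ and from there to its quotient $\D_1$; iterating, non-amenability of $\D_l$ forces non-amenability of $\D$. Non-amenability of $\D_l$ itself is obtained as in your root-group step: the root evaluation of $\D_l$, composed with the projection of $S_{\{1,\dots,d_l\}}\times S_{\{d_l+1,\dots,d_l+d'_l\}}$ onto the second block, is onto $\langle S''_l,H''_lF''_l\rangle\simeq S_l\ast(H_l\times F_l)$ (note the small slip in your sketch: $s'$ maps to $s'_l=s_ls''_l$, not to $s''_l$; the further projection onto the block $\{d_l+1,\dots,d_l+d'_l\}$ repairs this). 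So your argument becomes correct once the claimed quotient $\D\twoheadrightarrow\D_l$ is replaced by the stabilizer-plus-surjectivity argument of Fact \ref{inf}; as written, that surjectivity (which genuinely uses saturation) is missing. The remainder of your proposal, including the remark that (2) can be deduced from (3) via the entropy--drift inequality, is consistent with the paper.
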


The proof will use the following:
%
\begin{fact}\label{inf}
Given $\gamma_1 \in\D_1$, there exists $\gamma$ in $\D$ and some
$\gamma_2,\ldots,\gamma_{e_0}$ in $\D_1$ such that $\gamma=(\gamma_1,\gamma_2,\ldots,\gamma_{e_0})id_{S'}$.
\end{fact}

Note that this fact implies that $\D$ is infinite, for it contains
$\gamma
s$ for all $s$ in $S'$ and then $\# \D\geq\#S' \#\D_1 \geq\#S'
\#S'_1 \#\D_2 \geq\cdots\,$. This is in particular an elementary proof
that directed saturated groups are infinite.
\begin{pf*}{Proof of Fact \ref{inf}}
Let $\gamma_1=x_1\cddots x_r$ be a representative word in $S'_1 \sqcup
H'_1F'$. By saturation of $H$, there exists for each $x_i$, an element
$h_i'$ in $H'$ such that $h_i'=(\ast,\ldots,x_i,\ldots,\ast)$ (where
$\ast$ marks some unknown value) with $x_i$ in position $1$ if $x_i
\in H'_1F'$ and $x_i$ in some position between $2$ and $d_0$ if $x_i
\in S'_1$. Now by saturation of $\G$, $S=S_{d_0}$ so $S'$ acts
transitively on $\{1,\ldots,d_0\}$, and there exists $s_i \in S'$ such
that $y_i=s_ih_is_i^{-1}=(x_i,\ast,\ldots,\ast)$. Then $\gamma
=y_1\cddots y_r=(x_1\cddots x_r,\ast,\ldots,\ast)=(\gamma_1,\gamma_2,\ldots,\gamma_{e_0})$. (Note that in fact $\gamma_{d_0+1}=\cdots
=\gamma_{e_0}=1$.)
\end{pf*}
\begin{pf*}{Proof of Proposition \ref{HA}}
The fact shows that the composition
\[
St_1(\D) \hookrightarrow\D_1 \times\cdots\times\D_1
\xrightarrow{\mathrm{pr}_1} \D_1
\]
is surjective, so that if
$\D_1$ is nonamenable, so is $St_1(\D)$ which is a subgroup of $\D$,
and thus $\D$ is nonamenable. Iterating the process shows that if
$\D_l$ is nonamenable, so is $\D$. Consequence (1) follows by Kesten's
theorem \cite{Kes}, (2) and (3) by the Kaimanovich--Vershik theorem
\cite{KV}.
\end{pf*}

\subsection{High order oscillations} The following theorems are
similar to Theorem~7.1 in \cite{Bri3} on oscillation of growth
functions; see also Chapter 2 in \cite{Bri1} and~\cite{KP}. The
entropy function of the groups $\D$ involved is not precisely
evaluated, but only some (rare) values of the function. The idea is to
use alternatingly localization and asymptotic evaluation to obtain a
group with low entropy at some scales and high entropy at other scales.
%
\begin{theorem}\label{thmh}
Let $\G(S,\HF)<\operatorname{Aut}(ET_{\bar d})$ be a saturated directed group with
measure $\m$ equidistributed on $S\HF S$. Let $h_1(n),h_2(n)$ be
functions such that $\frac{h_1(n)}{H_{\G,\m}(n)}\rightarrow\infty$
and $\frac{h_2(n)}{n}\rightarrow0$. Then there exists a group $\D
(S',H'F')<\operatorname{Aut}(ET_{\bar e})$ such that the entropy sequence for the
measure $\m'$ equidistributed on $S'H'F'S'$ satisfies:
\begin{longlist}[(3)]
\item[(1)] $H_{\G,\m}(n) \leq H_{\D,\m'}(n) \leq Cn$ for all $n$
and a constant $C$;
\item[(2)] $H_{\D,\m'}(n_i) \leq h_1(n_i)$ for an infinite sequence $(n_i)$;
\item[(3)]$H_{\D,\m'}(m_i) \geq h_2(m_i)$ for an infinite sequence $(m_i)$.
\end{longlist}
\end{theorem}
%
\begin{corollary}\label{corh}
For any $\frac{1}{2} \leq\a\leq1$, there exists a finitely
generated group $\D$ and a finitely supported symmetric measure $\m'$
such that
\[
\underline{h}\bigl(\D,\m'\bigr)=\a\quad\mbox{and}\quad \overline{h}
\bigl(\D,\m'\bigr)=1.
\]
\end{corollary}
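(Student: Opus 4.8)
The plan is to derive this corollary directly from Corollary~\ref{exactentropy} and Theorem~\ref{thmh} by choosing the gauge functions $h_1,h_2$ so that their exponents are $\a$ and $1$ respectively, and then to dispose of the boundary case $\a=1$ by a slightly different argument. So fix $\frac12\le\a<1$ first. I would apply Corollary~\ref{exactentropy} to get a valency sequence $\bar d$ and a finitely generated saturated directed group $\G=\G(S_{d_0},HF)<Aut(ET_{\bar d})$ whose random walk, for $\m$ equidistributed on $S_{d_0}HFS_{d_0}$, satisfies $\log H_{\G,\m}(n)/\log n\to\a$. Then I would set $h_1(n)=H_{\G,\m}(n)\log n$ and $h_2(n)=n/\log n$. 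These satisfy the hypotheses of Theorem~\ref{thmh}: $h_1(n)/H_{\G,\m}(n)=\log n\to\infty$ and $h_2(n)/n=1/\log n\to0$. Moreover $\log h_1(n)/\log n=(\log H_{\G,\m}(n)+\log\log n)/\log n\to\a$ and $\log h_2(n)/\log n=(\log n-\log\log n)/\log n\to1$, which are exactly the exponents I want to install. Theorem~\ref{thmh} then yields a group $\D=\D(S',H'F')<Aut(ET_{\bar e})$ with $\m'$ equidistributed on $S'H'F'S'$ and properties (1)--(3) of that statement.

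It then remains to read off the four one-sided estimates. From (1), $H_{\D,\m'}(n)\le Cn$ gives $\overline h(\D,\m')\le1$; and along the sequence $(m_i)$ of (3), $H_{\D,\m'}(m_i)\ge h_2(m_i)$ gives $\log H_{\D,\m'}(m_i)/\log m_i\to1$, hence $\overline h(\D,\m')\ge1$, so $\overline h(\D,\m')=1$. Symmetrically, the lower bound $H_{\D,\m'}(n)\ge H_{\G,\m}(n)$ from (1) gives $\underline h(\D,\m')\ge\liminf\log H_{\G,\m}(n)/\log n=\a$; and along the sequence $(n_i)$ of (2), $H_{\D,\m'}(n_i)\le h_1(n_i)$ gives $\log H_{\D,\m'}(n_i)/\log n_i\to\a$, hence $\underline h(\D,\m')\le\a$, so $\underline h(\D,\m')=\a$. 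This proves the corollary for $\a<1$; no further computation is needed, the content being already packaged in Corollary~\ref{exactentropy} and Theorem~\ref{thmh}.

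For $\a=1$ one only needs a finitely generated group whose entropy exponent is exactly $1$. The quickest choice is any non-amenable group with uniform measure on a finite symmetric generating set, for which the entropy is linear; but to stay within the family of directed groups I would instead run the $\D(S',H'F')$-construction of Section~\ref{highorder} with all $d'_l$ finite yet $d'_l\to\infty$, calibrating the sizes so that $\log d'_{l+1}/\log d'_l\to1$. Since every $d'_l$ is finite the group remains amenable (as in the low-asymptotic regime), while Proposition~\ref{local} shows that the ball $B_\D(R)$ coincides, up to a scale comparable to $d'_l$, with a ball in the non-amenable group built from the free products $S_l\ast(H_l\times F_l)$, forcing $H_{\D,\m'}(n)\ge cn$ at those scales; combining this with monotonicity and subadditivity of entropy and with the subpolynomial growth of the $d'_l$ one gets $\log H_{\D,\m'}(n)/\log n\to1$ for every $n$, i.e.\ $h(\D,\m')=1$.

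I expect the only delicate point to be precisely this $\a=1$ case when one insists on an amenable directed-group example: the ``non-amenable scales'' must be dense enough (no polynomial-size gaps between consecutive $d'_l$) to keep the exponent pinned at $1$, while each $d'_l$ must stay finite so that amenability is not lost; the trade-off $\log d'_{l+1}/\log d'_l\to1$ is what makes both demands compatible. By contrast the range $\frac12\le\a<1$ is essentially bookkeeping on top of the two cited results.
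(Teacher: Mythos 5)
Your proof is correct and follows essentially the same route as the paper: for $\frac12\le\a<1$ the paper likewise takes a group with $h(\G,\m)=\a$ from Corollary~\ref{exactentropy} and applies Theorem~\ref{thmh} with exactly $h_1(n)=H_{\G,\m}(n)\log n$ and $h_2(n)=n/\log n$, and for $\a=1$ it simply takes any non-amenable group, which is the ``quickest choice'' you mention. Your alternative amenable directed-group construction for $\a=1$ is unnecessary for the statement (which imposes no restriction on $\D$) and is only sketched, not proved, but this does not affect the validity of your argument since the non-amenable option you state first already settles that case.
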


Corollaries \ref{exactentropy}, \ref{oscil} and \ref{corh} imply
Theorem \ref{MT}.
\begin{pf*}{Proof of Corollary \ref{corh}}
If $\a=1$, take $\D$ any nonamenable group. If $\a<1$, apply
Theorem \ref{thmh} with $h(\G,\m)=\a$ (exists by Corollary \ref
{exactentropy}), $h_1(n)=H_{\G,\m}(n) \log n$ and $h_2(n)=\frac
{n}{\log n}$.
\end{pf*}
\begin{pf*}{Proof of Theorem \ref{thmh}}
The first condition is trivially satisfied since $\G(S,\HF)$ is a
quotient of $\D(S',H'F')$.

The strategy is to construct the group $\D(S',H'F')<\operatorname{Aut}(ET_{\bar e})$
with a sequence $(l_i)_i$ rapidly increasing such that $d_l'=0$ when $l
\notin\{l_i\}_i$, and $(d'_{l_i})_i$ is rapidly increasing such that
the group $\langle S''_{l_i},H''_{l_i}F''_{l_i} \rangle<S_{d'_{l_i}}$
is a big finite quotient of the free product $S''_{l_i}\ast H''_{l_i}F''_{l_i}$.

Roughly, as $(l_i)$ is rapidly increasing, there are scales at which an
observer has the impression that $\D(S',H'F')$ resembles the group $\G
(S,\HF)$ and has low asymptotic. As $d'_{l_i}$ is big, there are scales
at which an observer has the impression that $\D(S',H'F')$ contains a
free group and has high asymptotic.

More precisely, assume by induction that parameters $l_j,d'_{l_j}$ and
integers $m_{j-1} \leq n_j \leq m_j$ are constructed for $j <i$
together with an integer $k_{i-1}=1+\log_2 r_{i-1} \geq l_{i-1}$ such
that $\{H(m)|m \leq m_{i-1}\}$ depends only on $B(r_{i-1})$. By
localization (Proposition \ref{local}), this ball, and hence the values of the
$m_{i-1}$ first values of the entropy function, depend uniquely on
$l,d'_{l}$ and the groups $\langle S''_{l},H''_{l}F''_{l} \rangle
<S_{d'_{l}}$ for $l\leq k_{i-1}$.

Now construct $l_i,d'_{l_i},n_i,m_i,k_i$ by describing the sequence of
finite groups $\langle S''_{l},H''_{l}F''_{l} \rangle<S_{d'_{l}}$ for
$k_{i-1}<l \leq k_i$.

The group $\underline{\D}_i(\G',H'F')$ where $d'_l=0$ for all $l\geq
k_i$ has low asymptotic by Proposition \ref{LA}, so
%
\begin{equation}
\label{LAE} H_{\underline{\D}_i,\m'}(n) \leq H_{\G,\m}(n)+\log C
\end{equation}
for some $C$, and in particular, there exists $n_i$ such that
\[
H_{\underline{\D}_i,\m'}(n_i) \leq h_1(n_i).
\]
By localization (Proposition \ref{local}) this value of entropy depends only on a
ball of radius $R_i$ in the Cayley graph of $\underline{\D}_i(\G',H'F')$ which depends only on the $L_i=1+\log_2 R_i$ first levels.

Now fix $l_i=L_i+1$ and let $\overline{\D}_i(S',H'F')$ be the group
with $d'_l$ and $\langle S''_{l},\allowbreak H''_{l}F''_{l} \rangle<S_{d'_{l}}$ as
above for $l \leq l_i-1$ and fix (momentarily) $d'_{l_i}=\infty$, with
\[
\bigl\langle S''_{l_i},H''_{l_i}F''_{l_i}
\bigr\rangle\simeq S''_{l_i} \ast
H''_{l_i}F''_{l_i}
\simeq S_l \ast H_lF <S_\infty
\]
and $d'_l=0$ for $l >l_i$. The group $\overline{\D}_i(S',H'F')$ is
nonamenable by Proposition \ref{HA} of high asymptotic, so
%
\begin{equation}
\label{HAE} H_{\overline{\D}_i,\m'}(m) \geq cm
\end{equation}
for some $c>0$, and in particular, there exists $m_i$ such that
\[
H_{\overline{\D}_i,\m'}(m_i) \geq h_2(m_i).
\]
Now by localization (Proposition \ref{local}), the $m_i$ first values of the entropy
function depend only on a ball of radius $r_i$ in the Cayley graph of
$\overline{\D}_i(S',H'F')$, which depends only on the $k_i=1+\log_2
r_i$ first levels, and the balls of radius $2r_i+1$ in $\langle
S''_{l},H''_{l}F''_{l} \rangle<S_{d'_{l}}$ for $l \leq k_i$.

In particular, the values $\{H(m)|m\leq m_i\}$ are the same if $\langle
S''_{l_i},H''_{l_i}F''_{l_i} \rangle$ is any group coinciding with the
free product $S''_{l_i} \ast H''_{l_i}F''_{l_i}\simeq S_l \ast H_lF$ in
a ball of radius $2r_i+1$. As a free product of finite groups is
residually finite, there exists such a group which is finite of size $d'_{l_i}$.

The parameters $l_i,d'_{l_i},k_i\geq l_i$, the finite groups $\langle
S''_{l},H''_{l}F''_{l} \rangle< S_{d'_l}$ for $l \leq k_i$ and the
integers $n_i,m_i$ are now constructed for all $i$ by induction.

The sequence $(d'_{l_i})_i$ of positive integers and the finite groups
$\langle S''_{l_i},H''_{l_i}F''_{l_i} \rangle< S_{d'_{l_i}}$ allow us
to define a group $\D$ with entropy satisfying $H_{\D,\m'}(n_i) \leq
h_1(n_i)$ and $H_{\D,\m'}(m_i) \geq h_2(m_i)$ for all $i$, because
the balls of radius $r_i$ in $\D$ and in $\overline{\D}_i$ coincide.
\end{pf*}

In the point of view of information theory of Remark \ref{IT}, the
minimal tree description remains valid. However, the number of digits
needed to describe $\s_u$ is not anymore bounded independently of the
level $l$, for now $d_{l_i} \rightarrow\infty$, which explains the
higher values taken by the entropy.
%
\begin{theorem}\label{thmp}
Let $\G(S,\HF)<\operatorname{Aut}(ET_{\bar d})$ be a saturated directed group with
random walk $Y_n$ of law $\m$ equidistributed on $S\HF S$. Let
$p_1(n),p_2(n)$ be functions such that $\frac{p_1(n)}{P(Y_n=_\G
1)}\rightarrow0$ and that for any $c>0$ and $n$ large, $p_2(n) \geq
e^{-cn}$. Then there exists a group $\D(S',H'F')<\operatorname{Aut}(ET_{\bar e})$
such that the return probability for the random walk $Y'_n$ with law
$\m'$ equidistributed on $S'H'F'S'$ satisfies:
\begin{longlist}[(3)]
\item[(1)] $P(Y_n=_\G1) \geq P(Y'_n=_\D1) \geq e^{-Cn}$ for all $n$
and a constant $C$;
\item[(2)] $P(Y'_{n_i}=_\D1) \geq p_1(n_i)$ for an infinite sequence $(n_i)$;
\item[(3)] $P(Y'_{n_i}=_\D1) \leq p_2(m_i)$ for an infinite sequence $(m_i)$.
\end{longlist}
\end{theorem}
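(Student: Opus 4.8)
The plan is to follow the proof of theorem \ref{thmh} almost verbatim, with return probability in place of entropy, using proposition \ref{LA}(1) and proposition \ref{HA}(1) instead of their entropy counterparts. Condition (1) is immediate: $P(Y_n=_\G 1)\geq P(Y'_n=_\D 1)$ since $\G$ is a quotient of $\D$ (properties \ref{deltaproperties}(3)) and return probability does not decrease under quotients, while $P(Y'_n=_\D 1)\geq \m'(1)^n=e^{-Cn}$ because $1$ belongs to the generating set $S'H'F'S'$. So the content is to arrange (2) and (3) by choosing a rapidly growing sequence $(l_i)_i$ with $d'_l=0$ for $l\notin\{l_i\}_i$, and a rapidly growing sequence $(d'_{l_i})_i$, where $\langle S''_{l_i},H''_{l_i}F''_{l_i}\rangle<S_{d'_{l_i}}$ is a large finite quotient of the free product $S_{l_i}\ast(H_{l_i}\times F)$.

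The induction runs as follows. Suppose $l_j,d'_{l_j}$ ($j<i$), integers $m_{j-1}\leq n_j\leq m_j$ ($j<i$), and $k_{i-1}=1+\log_2 r_{i-1}\geq l_{i-1}$ have been fixed, with the first $m_{i-1}$ values of return probability depending (by proposition \ref{local}) only on the first $k_{i-1}$ levels. Let $\underline{\D}_i$ be the group obtained by setting $d'_l=0$ for $l>k_{i-1}$; its only infinite-free-product pieces sit at levels $l_1,\dots,l_{i-1}$ and are already finite, so proposition \ref{LA}(1) gives $P(Y'_n=_{\underline{\D}_i}1)\geq\tfrac1C P(Y_n=_\G 1)$ for all $n$. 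Since $p_1(n)/P(Y_n=_\G 1)\to 0$, there is $n_i$, as large as we please, with $P(Y'_{n_i}=_{\underline{\D}_i}1)\geq p_1(n_i)$; by proposition \ref{local} this value depends only on finitely many levels, and we fix $l_i>k_{i-1}$ strictly larger than all of them. Now let $\overline{\D}_i$ agree with $\underline{\D}_i$ at all levels $<l_i$, put $d'_{l_i}=\infty$ with $\langle S''_{l_i},H''_{l_i}F''_{l_i}\rangle\simeq S_{l_i}\ast(H_{l_i}\times F)<S_\infty$, and $d'_l=0$ for $l>l_i$. By proposition \ref{HA}, $\overline{\D}_i$ is non-amenable, so $P(Y'_n=_{\overline{\D}_i}1)\leq e^{-cn}$ for some $c>0$ and $n$ large; since $p_2(n)\geq e^{-cn}$ for $n$ large by hypothesis, there is $m_i\geq n_i$, again as large as we please, with $P(Y'_{m_i}=_{\overline{\D}_i}1)\leq p_2(m_i)$. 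By proposition \ref{local}, the first $m_i$ values of return probability depend only on the first $k_i=1+\log_2 r_i$ levels (chosen with $k_i\geq l_i$) and the balls of radius $2r_i+1$ in the groups $\langle S''_l,H''_lF''_l\rangle$, $l\leq k_i$. A free product of finite groups being residually finite, we may replace the infinite group at level $l_i$ by a finite quotient of suitable size $d'_{l_i}$ which agrees with $S_{l_i}\ast(H_{l_i}\times F)$ on the ball of radius $2r_i+1$, leaving the first $m_i$ values of return probability unchanged. This closes the induction and defines $\D$.

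It remains to observe that $\D$ works: the ball of $\D$ determining $P(Y'_{n_i}=_\D 1)$ coincides with the corresponding ball of $\underline{\D}_i$ (these balls depend only on levels $<l_i$, on which $\D$ and $\underline{\D}_i$ agree), so $P(Y'_{n_i}=_\D 1)\geq p_1(n_i)$; and the ball of $\D$ of radius $r_i$ coincides with that of the truncated $\overline{\D}_i$ (same data up to level $k_i$, including agreement at level $l_i$ on radius $2r_i+1$), so $P(Y'_{m_i}=_\D 1)\leq p_2(m_i)$. The main obstacle is exactly the one in theorem \ref{thmh}: the bookkeeping of the nested radii and levels. One must choose $l_i$ only after $n_i$ (so that pushing the free product down to level $l_i$ cannot spoil the low-asymptotic value at $n_i$), and the finite quotient size $d'_{l_i}$ only after $m_i$ (so that truncating the free product cannot spoil the high-asymptotic value at $m_i$), while maintaining $k_{i-1}<l_i\leq k_i$ and $m_{i-1}\leq n_i\leq m_i$ so that each stage reads off only data fixed in the previous stages and the induction genuinely closes.
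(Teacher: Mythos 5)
Your proposal is correct and follows essentially the same route as the paper: the paper's own proof of Theorem \ref{thmp} simply reruns the induction of Theorem \ref{thmh}, substituting the return-probability estimates of Propositions \ref{LA}(1) and \ref{HA}(1) for the entropy estimates, together with the localisation Proposition \ref{local} and residual finiteness of the free product, exactly as you do. Your explicit justification of condition (1) via the quotient map and $\m'(1)>0$ is slightly more detailed than the paper's, but the argument is the same.
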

%
\begin{theorem}\label{thml}
Let $\G(S,\HF)<\operatorname{Aut}(ET_{\bar d})$ be a saturated directed group with
measure $\m$ equidistributed on $S\HF S$. Let $L_1(n),L_2(n)$ be
functions such that $\frac{L_1(n)}{L_{\G,\m}(n)}\rightarrow\infty$
and $\frac{L_2(n)}{n}\rightarrow0$. Then there exists a group $\D
(S',H'F')<\operatorname{Aut}(ET_{\bar e})$ such that the drift for the measure $\m'$
equidistributed on $S'H'F'S'$ satisfies:
\begin{longlist}[(3)]
\item[(1)] $L_{\G,\m}(n) \leq L_{\D,\m'}(n) \leq Cn$ for all $n$
and a constant $C$;
\item[(2)] $L_{\D,\m'}(n_i) \leq L_1(n_i)$ for an infinite sequence $(n_i)$;
\item[(3)] $L_{\D,\m'}(m_i) \geq L_2(m_i)$ for an infinite sequence $(m_i)$.
\end{longlist}
\end{theorem}
\begin{pf}
The proof of Theorem \ref{thmh} applies to Theorems \ref{thmp} and
\ref{thml}, with (a priori) different choices of parameters\vspace*{1pt}
$l_i,d'_{l_i}$ and integers $n_i,m_i$, obtained by replacing inequality
(\ref{LAE}) by (Proposition \ref{LA} of low asymptotic)
%
\begin{eqnarray}
P(Y_n=_\D1) &\geq&\frac{1}{C} P(Y_n=_\G1)
\geq p_1(n),
\\
L_{\D,\m'}(n) &\leq& L_{\G,\m}(n)+K \leq L_1(n)
\end{eqnarray}
for $n$ large enough, which allows us to find $n_i$, and replacing
inequality (\ref{HAE}) by (Proposition \ref{HA} of high asymptotic)
%
\begin{eqnarray}
P(Y_m=_\D1) &\leq& e^{-cm} \leq
p_2(m),
\\
L_{\D,\m'}(m) &\geq& cm \geq L_2(m)
\end{eqnarray}
for $m$ large enough, which allows us to find $m_i$.
\end{pf}
%
\begin{corollary}[(Theorem \ref{MTT})]\label{corp}
There exists a finitely generated group $\D$ and a finite symmetric
measure $\m'$ such that the return probability of the random walk
$Y'_n$ of law $\m'$ satisfies
\[
\underline{p}(\D)=\tfrac{1}{3},\qquad \overline{p}(\D)=1 \quad\mbox{and}
\quad\underline{\delta}\bigl(\D,\m'\bigr)=\tfrac{1}{2},\qquad \overline{
\delta}\bigl(\D,\m'\bigr)=1.
\]
\end{corollary}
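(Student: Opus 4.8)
The plan is to feed the specific group $\G=\G(S,HF)=F\wr_{\partial T_2}D_\infty$ of Proposition \ref{specific} (with $F$ a nontrivial finite abelian group) into the high-order oscillation machinery of Theorems \ref{thmp} and \ref{thml}. First one checks that $\G$ is a finitely generated saturated directed group of the binary extended tree: $S=S_2$, and $H=\langle h\rangle$ with $h=(h,s)$ projects diagonally and surjectively onto each factor $S_2$ of (\ref{Habstrait}), so the saturation hypothesis holds and both theorems apply. By Proposition \ref{specific}, $\G$ has return probability exponent $\tfrac13$ and drift exponent $\tfrac12$; by \cite{PSC1} the return probability exponent is the same for the measure $\m$ equidistributed on $SHFS$, and the drift walk for $\m$ on $SHFS$ is comparable to the one in Proposition \ref{specific}, so that $|\log P(Y_n=_\G 1)|=n^{\frac13+o(1)}$ and $L_{\G,\m}(n)=n^{\frac12+o(1)}$.

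Next I would run the inductive construction underlying Theorem \ref{thmh} (equivalently Theorems \ref{thmp} and \ref{thml}) once on $\G$, producing a single finitely generated group $\D=\D(S',H'F')$ together with the measure $\m'$ equidistributed on $S'H'F'S'$ and two sequences $n_i\to\infty$ and $m_i\to\infty$ such that, by the Localisation Proposition \ref{local}, near scale $n_i$ the walk on $\D$ coincides with the walk on a ``low asymptotic'' group $\underline\D_i$ as in Proposition \ref{LA}, and near scale $m_i$ it coincides with a non-amenable ``high asymptotic'' group $\overline\D_i$ as in Proposition \ref{HA}. The essential point making one run enough for both functions at once is that Propositions \ref{LA} and \ref{HA} each control return probability and drift simultaneously; the auxiliary functions in Theorems \ref{thmp}, \ref{thml} serve only to select the scales, and any sequences tending to infinity will do.

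Then I would read off the four exponents. The lower bounds valid at every scale come from the quotient map $\D\twoheadrightarrow\G$ with generators identified (Property \ref{deltaproperties}(3)): it gives $|\log P(Y'_n=_\D1)|\ge|\log P(Y_n=_\G1)|$ and $L_{\D,\m'}(n)\ge L_{\G,\m}(n)$ for all $n$, hence $\underline p(\D)\ge\tfrac13$ and $\underline\d(\D,\m')\ge\tfrac12$. The matching upper bounds at the scales $n_i$ come from Proposition \ref{LA}(1)--(2): $P(Y'_{n_i}=_\D1)=P(Y_{n_i}=_{\underline\D_i}1)\ge\tfrac1C P(Y_{n_i}=_\G1)$ and $L_{\D,\m'}(n_i)=L_{\underline\D_i,\m'}(n_i)\le L_{\G,\m}(n_i)+K$, so at these scales $|\log P(Y'_{n_i}=_\D1)|=n_i^{\frac13+o(1)}$ and $L_{\D,\m'}(n_i)=n_i^{\frac12+o(1)}$, forcing $\underline p(\D)\le\tfrac13$ and $\underline\d(\D,\m')\le\tfrac12$. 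Finally $\overline p(\D)\le1$ and $\overline\d(\D,\m')\le1$ hold for any group (and explicitly by Theorems \ref{thmp}(1), \ref{thml}(1)), while at the scales $m_i$ the non-amenability of $\overline\D_i$ gives, via Proposition \ref{HA} (Kesten, Kaimanovich--Vershik), $P(Y'_{m_i}=_\D1)\le e^{-cm_i}$ and $L_{\D,\m'}(m_i)\ge cm_i$, so $\overline p(\D)\ge1$ and $\overline\d(\D,\m')\ge1$. Combining all of this yields $\underline p(\D)=\tfrac13$, $\overline p(\D)=1$, $\underline\d(\D,\m')=\tfrac12$, $\overline\d(\D,\m')=1$.

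The main obstacle I anticipate is the bookkeeping ensuring that one and the same group and measure realize both statements: one must verify that the alternating low/high asymptotic stages of the construction of Theorem \ref{thmh} can be interleaved without conflict — which works precisely because Propositions \ref{LA} and \ref{HA} each estimate return probability and drift together — and, more delicately, that the $\liminf$ values are genuinely attained. The latter is the crux: it requires pairing the one-sided estimate from $\D\twoheadrightarrow\G$ (valid at all $n$) with the reverse one-sided estimate from Proposition \ref{LA} (valid at the scales $n_i$), since the a priori bounds of Corollary \ref{cordrift} and Theorem \ref{RPE} by themselves are too coarse to pin down the exponents $\tfrac13$ and $\tfrac12$ on the nose.
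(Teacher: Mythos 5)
Your proposal is correct and follows essentially the same route as the paper: the paper's proof takes $\G=F\wr_{\partial T_2}D_\infty$ from Proposition \ref{specific}, sets $p_1(n)=e^{-n^{1/3}\log n}$, $p_2(n)=e^{-n/\log n}$, $L_1(n)=n^{1/2}\log n$, $L_2(n)=n/\log n$, and applies Theorems \ref{thmp} and \ref{thml}. You are in fact slightly more careful than the paper on the one point that matters, namely that a single run of the inductive construction must yield one group $\D$ realizing all four exponents simultaneously (which works, as you note, because Propositions \ref{LA} and \ref{HA} control return probability and drift together at each stage), whereas the paper simply invokes the two theorems, whose proofs allow a priori different parameters.
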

\begin{pf}
Take $\G(S,\HF)=F \wr_{\partial T_2} D_\infty<\operatorname{Aut}(ET_2)$ to be the
directed saturated group of Proposition \ref{specific}, for which
$e^{-n^{{1/3}-\varepsilon}} \geq P(Y_n=_\G1) \geq e^{-n^{
{1/3}}}$ and $L_{\G,\m}(n) \approx n^{{1/2}}$. Take
$p_1(n)=e^{-n^{{1/3}}\log n}$, $p_2(n)=e^{-{n}/{\log n}}$,
$L_1(n)=\break n^{{1/2}}\log n$ and $L_2(n)=\frac{n}{\log n}$. Apply
Theorems \ref{thmp} and \ref{thml}.
\end{pf}

\section{Generalization}\label{gls}

The definition of a saturated directed group acting on an extended tree
$ET_{\bar d}$ for a bounded sequence $\bbar d$ can be slightly
generalized, with adaptation of Theorems \ref{MTE} and \ref{RPE}.

By $S_d$ denote a finite group acting faithfully and transitively on $\{
1,\ldots,d\}$ (not necessarily the full group of permutation). Replace
definition (\ref{defh}) in Section \ref{directedgroups} by
\[
h=(h_1,\s_2,\ldots,\s_{d_0})\s_h
\]
with $\s_h$ in $\mathrm{Fix}_{S_{d_0}}(1)$, a permutation of $\{1,\ldots,d_0\}$
that fixes $1$. By recursion, $\s_{h_l}(1)=1$ for all $l$, and hence
$h$ fixes the infinite ray $1^\infty$, and thus commutes with $\varphi_f$. The group $H_{\bar d}$ directed by the ray $1^\infty$ is the
uncountable locally finite group $H_{\bar d}=AT_0 \times AT_1 \times
\cdots\,$, where
\[
AT_l=S_{d_{l+1}} \wr S_{d_l-1}=(\underbrace{S_{d_{l+1}}
\times\cdots \times S_{d_{l+1}}}_{d_l-1\ \mathrm{factors}}) \rtimes S_{d_l-1}.
\]
Given another sequence $\bbar c=(c_l)_l$ of integers such that $1 \leq
c_l \leq d_l-1$, define the subgroup $PT_l$ by
\[
PT_l=\underbrace{S_{d_{l+1}} \times\cdots\times
S_{d_{l+1}}}_{c_l\
\mathrm{factors}} \times\{1\} \times\cdots\times\{1\} <
S_{d_{l+1}} \wr S_{d_l-1}=AT_l
\]
with $c_l$ factors $S_{d_{l+1}}$ when $c_l < d_l-1$, and $PT_l=AT_l$ if
$c_l=d_l-1$. The hypothesis of saturation of a group $G(S,H)$ can be
relaxed as relative saturation with respect to $\bbar c$ by requiring
that $S=S_{d_0}$ acts transitively on $\{1,\ldots,d_0\}$, and the
subgroup $H<H_{\bar d}$ is included in
\[
H <PT_0 \times PT_1 \times\cdots
\]
with the equidistribution measure on $H$ projecting to equidistribution
measure on each factor $PT_l$.

Given the sequences $\bbar d=(d_l)_l$ and $\bbar c=(c_l)_l$ of integers
with $1 \leq c_l \leq d_l-1$, define a new sequence $\bbar
{p}'=(p'_l)_l$ by $p'_l=\frac{c_l}{(c_l+1)d_l}$, and set
\begin{eqnarray*}
\b_{\bar d,\bar c}(n)&=&\frac{\log(d_0\cddots
d_{k(n)})}{\log n}\qquad \mbox{where }
k(n)=k_{\bar d, \bar c}(n) =\min\bigl\{k|p'_0\cddots
p'_k n \leq1\bigr\},
\\
\b'_{\bar d,\bar c}(n)&=&\frac{\log(d_0\cddots d_{l(n)})}{\log n}
\qquad\mbox{where }
l(n)=l_{\bar d,\bar c}(n)=\max\biggl\{l\Big|\frac
{d_0}{p_0'}\cddots
\frac{d_l}{p_l'}\leq n\biggr\}.
\end{eqnarray*}
With this notation, Theorems \ref{MTE} and \ref{RPE} generalize to:
%
\begin{theorem}\label{gl}
Given bounded sequences $\bbar d$ and $\bbar c$, a relatively saturated
directed group $\G(S,\HF)$ and the measure $\m$ of equidistribution on
$S\HF S$, one has for arbitrary $\varepsilon>0$ and $n$ large:
\begin{longlist}[(3)]
\item[(1)] $|\frac{\log H_{\G,\m}(n)}{\log n}-\b_{\bar d,\bar
c}(n)|\leq\varepsilon$;
\item[(2)] $\b_{\bar d,\bar c}(n)-\varepsilon\leq\frac{\log L_{\G,\m
}(n)}{\log n} \leq\frac{1+\b_{\bar d,\bar c}(n)}{2}+\varepsilon$;
\item[(3)] $\b'_{\bar d,\bar c}(n)-\varepsilon\leq\frac{\log\log
P(Y_n=_\G1)}{\log n} \leq\b_{\bar d,\bar c}(n)+\varepsilon$.
\end{longlist}
\end{theorem}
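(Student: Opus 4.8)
The plan is to observe that essentially the entire apparatus of Sections \ref{srp}--\ref{sectionRW} carries over to the relatively saturated setting, the only substantive change being that the contraction parameter $p_l=\frac{d_l-1}{d_l^2}$ of Lemma \ref{randomRP} is replaced everywhere by $p'_l=\frac{c_l}{(c_l+1)d_l}$. First I would set up the rewriting process (Proposition \ref{rp}), the minimal tree, the activity and the ascendance forest exactly as before: these are purely combinatorial, using only that $\bar d$ is bounded by $D$, that each $S_{d_l}$ acts faithfully and transitively, and that $h$ fixes the ray $1^\infty$ (still true since $\s_{h_l}(1)=1$, so $\f_f$ commutes with $H$). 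In particular Proposition \ref{counta} and its exponential bound $\#\{\g\mid\exists\, w=_\G\g,\ a(w)\le r\}\le C^r$ survive verbatim.

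The heart of the adaptation is the analogue of Lemma \ref{randomRP}. Writing $Y_n=k_1^{\s_1}\cdots k_n^{\s_n}\s_{n+1}$ with $\s_j=s_1\cdots s_j$, $k_j=\f_{f_j}h_j$ and $h_j=(h_{1,j},\s_{2,j},\dots,\s_{d_0,j})\s_{h_j}$, a wreath product computation shows that coordinate $t$ of $Y_n$ is a product over $j$ of factors $a^{(j)}$, which at time $j$ lies in $H_1F$ (a ``deep'' contribution, namely $\f_{f_j}h_{1,j}$) precisely when the position process $u_j$ — the image of $t$ under a permutation built from $\s_1,\dots,\s_j$ and the rooted parts $\s_{h_1},\dots,\s_{h_{j-1}}$ — equals $1$, lies nontrivially in $S_{d_1}$ when $u_j$ falls in one of the $c_l$ active slots, and equals the identity of $S_{d_1}$ on the $d_l-1-c_l$ slots killed by the condition $H<PT_0\times PT_1\times\cdots$. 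Since the $\s_j$ are i.i.d. equidistributed on the transitive group $S_{d_0}$, one gets $P(u_j=1)=\frac1{d_0}$, and after deleting the absorbed identity contributions two consecutive surviving factors change type with probability $2p'_0$; hence the length $m_t$ of the reduced alternate word at coordinate $t$ concentrates around $p'_0n$ with $P(m_t/n\notin[p'_0-\th,p'_0+\th])\le c_\th^n$. Relative saturation guarantees that, conditionally on the position process, the surviving factors are independent and equidistributed in $S_{d_1}$ and $H_1F$ respectively, so $Y_n^t\mid m_t=Y'_{m_t}$, a random alternate word in the relatively saturated directed group $\G(S_{d_1},H_1F)<Aut(ET_{\s\bar d})$. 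Iterating gives $m_v\sim p'_0\cdots p'_{|v|-1}n$ almost surely.

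With this in hand, Lemma \ref{expectact} goes through with $p_i$ replaced by $p'_i$ and $\b$ by $\b_{\bar d,\bar c}$ (the analogues of Proposition \ref{thetapprox} and Fact \ref{bornesup} are formal consequences of boundedness of $\bar d$, hence of $\bar c$), yielding $\left|\frac{\log\E a(Y_n)}{\log n}-\b_{\bar d,\bar c}(n)\right|\le\e$. Then (1) is the proof of Theorem \ref{MTE}: the subadditivity $H(Y_n)\le\sum_t H(Y_n^t)+H(\t_n)$, integrated through the wreath product, gives $H(Y_n)\le n^{\b_{\bar d,\bar c}(n)+\e}$; and Fact \ref{supp} — which only uses that $F$ is a non-trivial finite group and that every boundary function supported in $A_\partial(w)$ is equally likely (still valid) — gives $H(Y_n)\ge C\,\E\,\#\mathrm{supp}(\f_n)\ge n^{\b_{\bar d,\bar c}(n)-\e}$. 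Statement (2) follows by combining (1) with Lemmas 6 and 7 of \cite{Ers3}, exactly as in Corollary \ref{cordrift}. For (3), the upper bound is the argument of Theorem \ref{RPE} via Proposition \ref{counta} and symmetry of the walk, while the lower bound is Fact \ref{vercon} with $p_l$, $l(n)$, $\b'$ replaced by $p'_l$, $l_{\bar d,\bar c}(n)$, $\b'_{\bar d,\bar c}$, together with the identity $P(\f_n=\mathrm{id}\mid a(Y_n)=a)=(\#F)^{-a}$.

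The main obstacle is precisely the analogue of Lemma \ref{randomRP}: unlike the saturated case, where $h$ has trivial rooted component and the position process is the equidistributed random walk $\s_j(t)$ on $\{1,\dots,d_0\}$, here the permutations $\s_{h_j}$ intervene, so one must verify that $u_j$ still hits $1$ with frequency $\frac1{d_0}$ and — the genuinely new point — that when $c_l<d_l-1$ the identity contributions on the $d_l-1-c_l$ dead slots are absorbed \emph{before} counting the $H_1F$-packs, which is exactly what turns $\frac{d_l-1}{d_l^2}$ into $\frac{c_l}{(c_l+1)d_l}$. Once this concentration statement is in place, everything downstream is a mechanical transcription of the earlier proofs under the substitution $p\rightsquigarrow p'$.
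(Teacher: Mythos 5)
Your proposal follows the paper's proof essentially verbatim: the only substantive step is the adaptation of Lemma \ref{randomRP}, with the position process driven by $\s_j=s_1\s_{h_1}s_2\cdots\s_{h_{j-1}}s_j$ (still i.i.d.\ uniform on $\{1,\dots,d_0\}$ by transitivity), the $d_0-1-c_0$ dead slots contributing absorbed identity factors, and the resulting contraction factor $p'_0=\frac{c_0}{(c_0+1)d_0}$, after which (1)--(3) are the mechanical substitution $p\rightsquigarrow p'$ in Theorems \ref{MTE}, \ref{RPE} and Corollary \ref{cordrift}, exactly as in the paper. (One cosmetic point: among the surviving non-trivial factors the type-change probability is $\frac{2c_0}{(c_0+1)^2}$, which equals $2p'_0$ only per original time step after multiplying by the survival density $\frac{c_0+1}{d_0}$; the conclusion $m_t\sim p'_0 n$ is of course unaffected.)
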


For constant sequences $d_l=d$ and $c_l=c\leq d-1$, the sequences $\b_{\bar d,\bar c}(n)$ and $\b'_{\bar d,\bar c}(n)$ have limits, respectively,
\begin{eqnarray*}
\b_{d,c}&=&\frac{\log d}{\log({d(c+1)}/{c})}\\
&=&\frac{1}{1+
{\log(({c+1})/{c})}/{\log d}}\qquad
\mbox{so }
\b_{d,1}=\frac
{1}{1+{\log2}/{\log d}},
\\
\b'_{d,c}&=&\frac{\log d}{\log({d^2(c+1)}/{c})}\\
&=&\frac{1}{2+
{\log(({c+1})/{c})}/{\log d}}
\qquad\mbox{so }\b'_{d,1}=\frac
{1}{2+{\log2}/{\log d}}.
\end{eqnarray*}
%
\begin{examples}\label{ex}
(1) When the valency sequence is constant of value $d$, for
$H=AT(d)=S_d \wr S_{d-1}$ diagonally embedded into $AT_0 \times AT_1
\times\cdots\,$, obtain the mother group $G(S_d,AT(d))$ of polynomial
automata of degree $0$; see \cite{AAV,BKN}. With an extension
$F$ at the tree boundary, one has for $\G(d)=\G
(S_d,AT(d)F)<\operatorname{Aut}(ET_d)$ the estimates
\[
h\bigl(\G(d),\m\bigr)=\b_d \quad\mbox{and}\quad \b'_d
\leq\underline{p}\bigl(\G (d),\m\bigr) \leq\overline{p}\bigl(\G(d),\m\bigr) \leq
\b_d.
\]

(2) For the spinal groups $G_\o(q)$ of the article \cite{BS}
acting on a tree of constant valency $q=d$. With an extension $F$ at
the tree boundary, one has for $\G_\o(q)=F \wr_{\partial T} G_\o
(q)$ the estimates
\[
h\bigl(\G_\o(q),\m\bigr)=\b_{q,1} \quad\mbox{and}\quad
\b'_{q,1} \leq\underline {p}\bigl(\G_\o(q),\m
\bigr) \leq\overline{p}\bigl(\G_\o(q),\m\bigr) \leq
\b_{q,1}.
\]
\end{examples}
\begin{pf*}{Proof of Theorem \ref{gl}}
Notation of Proposition \ref{rp} and Lemma \ref{randomRP} becomes
\[
k_j=\bigl(k'_j,b_{j,2},
\ldots,b_{j,c_0+1},1,\ldots,1\bigr)\s_{h_j}
\]
and one should consider $k_j^{\s_j}$ for $\s_j=s_1 \s_{h_1} s_2
\cddots s_{j-1}\s_{h_{j-1}}s_j$. The sequence $(\s_j)_j$ consists of
independent terms equidistributed in $S_{d_0}$. As its action is
transitive on $\{1,\ldots,d_0\}$, for any fixed $t$, the sequence $(\s_j(t))_j$ is a sequence of independent equidistributed elements of $\{
1,\ldots,d_0\}$.

This ensures that $Y_n^t$ is a product of $n$ terms that are:
\begin{longlist}[(3)]
\item[(1)] either $b_{j,\s_j(t)}$ equidistributed in $S_{d_1}$ at
times $j$ when $\s_j(t) \in\{2,\ldots,\break c_0+1\}$,
\item[(2)]or $k'_j$ equidistributed in $H_1F$ at times $j$ when $\s_j(t)=1$,
\item[(3)] or trivial factors $1$ at times $j$ when $\s_j(t) \in\{
c_0+2,\ldots,d_0\}$.
\end{longlist}

The number of nontrivial terms is $n_t \sim\frac{c_0+1}{d_0}n$
almost surely. Now merging packs of terms in the same finite group
$S_{d_1}$ (with probability $\frac{c_0}{c_0+1}$ among nontrivial
terms) or $H_1F$ (with probability $\frac{1}{c_0+1}$ among nontrivial
terms), the length of $Y_n^t$ is almost surely
\[
m_t \sim\frac{c_0}{(c_0+1)^2}n_t \sim\frac{c_0}{(c_0+1)d_0}n.
\]
This shows that Lemma \ref{randomRP} is true with $p_0'=\frac
{c_0}{(c_0+1)d_0}$ instead of $p_0=\frac{d_0-1}{d_0^2}$.

Then Lemma \ref{expectact} generalizes as
\[
\biggl\llvert \frac{\log\E a(Y_n)}{\log n}-\b_{\bar d, \bar c}(n)\biggr\rrvert \leq
\varepsilon
\]
for $\varepsilon>0$ and $n$ large. Theorems \ref{MTE} and \ref{RPE}
and Corollary \ref{cordrift} follow straightforward. Note that the
condition on $\th$ in Fact \ref{vercon} becomes $p_i'-\th> \frac
{p_i'}{d_i}$.
\end{pf*}

\section{Comments and questions}\label{cq}

\subsection{Analogies between growth and entropy for directed groups}
Analogies between growth and entropy for directed groups are two fold.

First, there is an analogy between the computation of growth exponents
[as $\lim\inf \underline{\a}(\G)$, $\lim\sup \overline{\a}(\G)$ or
limit $\a(\G)$ of $\frac{\log\log b_\G(r)}{\log r}$] in \cite
{Bri3} and the computation of entropy exponents in Theorem \ref{MTE}.
For entropy, the computation is based on the contraction by a factor
$p_0$ of the word length under rewriting process of random alternate
words in $\G(S,\HF)$. For growth in the extended Aleshin--Grigorchuk
group $\G_{(012)^\infty}=F \wr G_{(012)^\infty}$, the computation is
based on the contraction by a factor $\frac{\eta}{2}$ in the wreath
product for reduced representative words; see \cite{BE}, and Lemma 5.4
in~\cite{Bri3}.

The contraction factor for entropy should only hold for random
alternate words, whereas the contraction factor for growth has to hold
for any alternate (pre-reduced) word, which heuristically explains why
$\frac{1}{2} =h(\G_{(012)^\infty},\m) < \a(\G_{(012)^\infty})
\approx0.76$. This inequality is a well-known property of Shannon
entropy that $H(\m) \leq\log\#\operatorname{supp}(\m)$ with equality for an
equidistributed measure \cite{Sha}.

There is a second analogy at the level of parameter space. For a fixed
bound $D$ on the valency $\bbar d$, the space of saturated directed
groups is (partially) parametrized by the Cantor set $\{2,\ldots,D\}^\N
$. The entropy exponent $\b_{\bar d}(n)$ is computed in Theorem \ref
{MTE} in terms of the sequence $\bbar d$ and the contraction factors
$(p_i)_i$ as $n^{\b_{\bar d}(n)} = d_0\cddots d_{k(n)}$ where $p_0\cddots
p_{k(n)} \approx\frac{1}{n}$. By Fact \ref{classfunction}, any
function $g(n)$ such that $dg(n) \leq g(\frac{d^2}{d-1}n)$ and
$g(\frac{D^2}{D-1}n)\leq Dg(n)$
is the entropy\vspace*{1pt} of some finitely generated group (with the approximation
of Theorem \ref{MTE}).

The space of extended Aleshin--Grigorchuk groups $\G_\o$ is also
parametrized by a Cantor set $\{0,1,2\}^\N$. Though the growth
function for a given sequence $\o$ is not known, Bartholdi and
Erschler have shown recently in \cite{BE3} that any function
$e^{g(n)}$ with $g(2n) \leq2g(n) \leq g(\eta_+ n)$ for $\eta_+
\approx2.46$ explicit is the growth function of some group (also
compare Corollary 4.2 in \cite{BE3} with definition of exponent
sequence at Section \ref{defbeta}).

\subsection{Comparison between growth, entropy, return probability and drift}

\mbox{\hspace*{-4pt}}Among finitely generated groups with symmetric finitely supported
measure, it is a natural question to classify the pairs $(\underline
{\a}(\G),\overline{\a}(\G))$, $(\underline{h}(\G,\m),\overline
{h}(\G,\m))$, $(\underline{p}(\G),\overline{p}(\G))$ and
$(\underline{\delta}(\G,\m),\overline{\delta}(\G,\m))$ in the
triangle $0 \leq\a\leq\b\leq1$. Comparing Theorem \ref{MTT} with
Theorem \ref{MT}, and the main result in \cite{Bri3} raises the
following two questions.
%
\begin{question}
Given a point $(\a,\b)$ in the triangle $\frac{1}{3} \leq\a\leq\b
\leq1$, does there exist a finitely generated group $\G$ the return
probability exponents of which satisfy
\[
\bigl(\underline{p}(\G),\overline{p}(\G)\bigr)=(\a,\b)\mbox{?}
\]
\end{question}
%
\begin{question}
Given a point $(\a,\b)$ in the triangle $\frac{1}{2} \leq\a\leq\b
\leq1$, does there exist a finitely generated group $\G$ together
with a (symmetric finitely supported) measure $\m$, the drift
exponents of which satisfy
\[
\bigl(\underline{\delta}(\G,\m),\overline{\delta}(\G,\m)\bigr)=(\a,\b)
\mbox{?}
\]
\end{question}

One approach would be to improve Theorem \ref{RPE} by understanding
how a particular choice of $H$ affects the return probability. Another
approach is by the technics developped in \cite{KP} that could lead to
strengthening of Theorems \ref{thmh} and \ref{thmp}.

Another natural question is to know if there are such pairs outside the
above mentioned triangles besides the pair $(0,0)$, obtained by
virtually nilpotent groups for growth, entropy and return probability,
by finite groups for drift. By \cite{LP}, the number $\frac{1}{2}$ is
a lower bound on the drift exponent of infinite groups. It raises the:
%
\begin{question}\label{qlb}
Does there exist a group $\G$ and a measure $\m$ such that
\[
0< \underline{h}(\G,\m) < \tfrac{1}{2}
\]
or
\[
0< \underline{p}(\G) < \tfrac{1}{3}
\]
or
\[
0<\underline{\a}(\G)<\a(\G_{(012)^\infty}) \approx0.76\mbox{?}
\]
\end{question}

By \cite{CGP}, groups of exponential growth have return probability
exponents $\geq\frac{1}{3}$. By Theorem \ref{RPE}, this is also a
lower bound for many groups with intermediate growth. A conjecture of
Grigorchuk asserts that a finitely generated group $\G$ is virtually
nilpotent when its growth function satisfies $b_\G(r) \leq e^{r^{
{1/2}-\varepsilon}}$. If this were the case, then $\overline{p}(\G
)<\frac{1}{5}$ would imply virtual nilpotency by \cite{CGP}. The
bound $\frac{1}{2}$ for entropy corresponds to some simple random walk
on the lamplighter group or on an extended directed Aleshin--Grigorchuk
group; see Remark \ref{entgro}.

Finally, one may wonder how these asymptotic quantities relate with
each other. For instance entropy is bounded by logarithm of growth so
$h(\G,\m) \leq\a(\G)$. Also Corollary 7.4 in \cite{CGP} implies that
\[
\frac{\underline{\a}(\G)}{2+\underline{\a}(\G)}
\leq\underline {p}(\G) \quad\mbox{and}\quad \overline{p}(\G) \leq
\frac{\overline{\a
}(\G)}{2-\overline{\a}(\G)}.
\]
Naively, one expects groups with low growth to have low return
probability exponents and vice-versa. However, taking $\G$ the
lamplighter on $\Z$ and $\G'=F \wr G_\o(q)$ an extension of a spinal
group $G_\o(q)$ of the article \cite{BS}, one has (note that
Theorem~6.1 in \cite{BS} still applies with boundary extension $F$, and
see Example \ref{ex}(2))
\[
\overline{\a}\bigl(\G'\bigr)<\a(\G)=1 \qquad\mbox{but }
\frac{1}{3} = p(\G) < \frac{1}{2+{\log2}/{\log q}}
\leq\underline{p}\bigl(
\G'\bigr).
\]
This raises the following question:
%
\begin{question}
For $\G$ finitely generated group with finitely supported symmetric
measure $\m$, what are the possible values of the $8$-tuple
\[
\bigl(\underline{\a}(\G),\overline{\a}(\G),\underline{p}(\G ),\overline{p}(\G),
\underline{h}(\G,\m),\overline{h}(\G,\m ),\underline{\delta}(\G,\m),\overline{
\delta}(\G,\m)\bigr)\mbox{?}
\]
\end{question}

\section*{Acknowledgments}

I am most grateful to Gidi Amir and Balint Virag who helped me correct
and improve a preliminary nonaccurate version of this article. I also
wish to thank Laurent Saloff-Coste, Alain Valette and Andrzej Zuk, who
supported me during this work. Finally, I thank the anonymous referee
for careful reading and many comments.



\printaddresses

\end{document}